\newcommand{\qeds}{{\qed\smallskip }}
\newcommand{\red}{}
 \newtheorem{thm}{Theorem}[section]
  \newtheorem{theorem}[thm]{Theorem}
 \newtheorem{corollary}[thm]{Corollary}
 \newtheorem{lemma}[thm]{Lemma}
 \newtheorem{proposition}[thm]{Proposition}
 \newtheorem{definition}[thm]{Definition}
 \theoremstyle{remark}
 \newtheorem{remark}[thm]{Remark}
 \newtheorem{example}[thm]{Example}
 \numberwithin{equation}{section}
\newcommand{\com}{{\rm com}}
\newcommand{\inv}{{\rm inv}}
\newcommand{\linv}{{\rm linv}}
\newcommand{\Dom}{{\rm Dom}}
\newcommand{\Grass}{{\rm Grass}}
\newcommand{\Ker}{{\rm Ker}}
\newcommand{\Gr}{{\rm Gr}}
\newcommand{\Pro}{{\rm Pr}}
\newcommand{\Ran}{{\rm Ran}}
\renewcommand{\ker}{{\rm Ker}}
\newcommand{\bra}{\langle} 
\newcommand{\ket}{\rangle}
\renewcommand\Re{{\rm Re\,}}
\renewcommand{\max}{{\rm max}}
\newcommand{\p}{{\rm p}}
\renewcommand{\sp}{{\rm sp}}
\newcommand{\rs}{{\rm rs}}
\newcommand{\grintl}{[\kern-.18em [}
\newcommand{\grintr}{]\kern-.18em ]}
\def\Ran{{\rm Ran\,}}
\def\ran{{\rm Ran\,}}
\def\dist{{\rm dist\,}}
\def\one{{\mathchoice {\rm 1\mskip-4mu l} {\rm 1\mskip-4mu l} {\rm
      1\mskip-4.5mu l} {\rm 1\mskip-5mu l}}} 
\newcommand{\mat}[4]{\left[\begin{array}{cc}#1 &#2  \\ #3 &#4 \end{array}\right]}
 \def\cB{{\mathcal B}} \def\cC{{\mathcal C}}
 \def\cH{{\mathcal H}} 
 \def\cK{{\mathcal K}} \def\cL{{\mathcal L}}
  \def\cX{{\mathcal X}}
\def\cY{{\mathcal Y}} 
\newcommand{\R}{{\mathbb R}}
\newcommand{\C}{{\mathbb C}}
\newcommand{\ben}{\begin{enumerate}}
\newcommand{\een}{\end{enumerate}}
\newcommand{\beq}{\begin{equation}}
\newcommand{\eeq}{\end{equation}}
\newcommand{\bep}{\begin{proposition}}
\newcommand{\eep}{\end{proposition}}
\newcommand{\bed}{\begin{definition}}
\newcommand{\eed}{\end{definition}}
\newcommand{\beqa}{\begin{eqnarray}}
\newcommand{\eeqa}{\end{eqnarray}}
\newcounter{smallroman}
\newenvironment{romanenumerate}
{\begin{list}{{\normalfont\textrm{(\roman{smallroman})}}}
  {\usecounter{smallroman}\setlength{\itemindent}{0cm}
   \setlength{\leftmargin}{5ex}\setlength{\labelwidth}{4ex}
   \setlength{\topsep}{0.75\parsep}\setlength{\partopsep}{0ex}
   \setlength{\itemsep}{0ex}}}
{\end{list}}
\begin{document}

\title{Continuous and holomorphic functions with values in closed operators}
\author{\textsc{Jan Derezi\'nski$^{1}$,\  Micha\l\ Wrochna$^{2}$}}

\date{}

\maketitle
\begin{center}\small
$^{1}$Department of Mathematical Methods in Physics,\\ Faculty of Physics, 
University of Warsaw\\ Ho\.{z}a 74, Warsaw
00-682, Poland\\ \vspace{0.2cm}
$^{2}$D\'epartement de Math\'ematiques, Universit\'e Paris-Sud XI,\\ B\^at. 425, 91405 Orsay Cedex, France\\
\vspace{0.2cm} e-mail: $^{1}$\texttt{Jan.Derezinski@fuw.edu.pl}, \ \ $^{2}$\texttt{Michal.Wrochna@math.u-psud.fr} \vspace{0.5cm} \end{center}

\begin{abstract} We systematically derive general properties of continuous and holomorphic functions with values in closed operators, allowing in particular for operators with empty resolvent set. We provide criteria for a given operator-valued  function to be continuous or holomorphic. This includes sufficient conditions for the sum and product of operator-valued holomorphic functions  to be holomorphic.

Using graphs of operators, operator-valued functions are identified with functions with values in subspaces of a Banach space. A special role is thus played by projections onto closed subspaces of  a Banach space, which depend holomorphically on a parameter.  
\end{abstract}

\vspace{0.5cm}


\section{Introduction}


\subsection*{Definition of continuous and holomorphic operator-valued functions}

It is obvious how to define the concept of a (norm) continuous or holomorphic function with values in bounded operators.
 In fact,
let
 $\cH_1$, $\cH_2$ be Banach spaces.
 The set of bounded operators from $\cH_1$ to $\cH_2$, denoted
by $\cB(\cH_1,\cH_2)$, has a natural metric given by the norm, which can be used to define the continuity. For the holomorphy, we could use the following definition:
\begin{definition}
A function $\C\supset\Theta\ni z\mapsto T_z\in \cB(\cH_1,\cH_2)$ 
 is holomorphic if $\lim\limits_{h\to0}\frac{T_{z+h}-T_z}{h}$ exists for all $z\in\Theta$.\label{defo1}\end{definition}
There exist other equivalent definitions. For instance,  we can demand that $z\mapsto \langle y |T_z x\rangle$ is holomorphic for all  bounded linear functionals $y$ on $\cH_2$ and all $x\in\cH_1$. 

It is easy to see that holomorphic functions with values in bounded operators have good properties that generalize the corresponding properties of $\C$-valued functions.
 For instance, the product of holomorphic functions is holomorphic; we
 have the uniqueness of the holomorphic continuation; if $z\mapsto
 T_z$ is holomorphic, then so is $z\mapsto T_{\bar z}^*$ (the ``Schwarz
 reflection  principle'').

 In practice, however, especially in mathematical physics and partial differential equations, one often  encounters functions with values in unbounded closed operators, for which the continuity and the holomorphy are more tricky. In our paper we  collect and prove various general facts concerning continuous and, especially, holomorphic functions with values in closed operators. In particular, in the context of Hilbert spaces we give a certain necessary and sufficient criterion for the holomorphy, which appears to be new and useful, and was the original motivation for writing this article.
Besides, we provide sufficient conditions for the continuity and holomorphy of the product and sum of operator-valued functions. 

Our main motivation is the case of Hilbert spaces. However, the main tool that we use are non-orthogonal projections, where the natural framework is that of Banach spaces, which we use for the larger part of our paper. In particular,  we  give  a systematic discussion of  elementary properties of projections on a Banach space. Some of them  we have not seen in the literature.

The continuity and holomorphy of functions with values in closed operators 
is closely related to the continuity and holomorphy of functions with values in closed subspaces. The family of closed
subspaces of a Banach space $\cH$ will be called its {\em  Grassmannian} and denoted $\Grass(\cH)$. It possesses a natural metric topology given by the  {\em gap function} {\red (Def \ref{def-gap})}.  

{\red  In order to make this introduction reasonably readable,  let us fix some terminology (which we give independently and in more detail
in the main part of  the article). By a {\em bounded left invertible operator} we  mean a bounded injective operator with a closed range (Def. \ref{def-inv}). For a function $z\mapsto\cK_z\in\Grass(\cH)$, its {\em injective resolution} is a function $z\mapsto T_z\in \cB(\cH_1,\cH)$ whose values are left-invertible and $\Ran T_z=\cK_z$ (Def. \ref{def-resol}).

The following proposition (Prop. \ref{pro1a} in the main text)
gives a useful  characterization of functions continuous in the gap topology. 
\begin{proposition} If $z\mapsto\cK_z\in \Grass(\cH)$ possesses a continuous injective resolution, then it is continuous in the gap topology. \label{prop1}\end{proposition}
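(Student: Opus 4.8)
The plan is to reduce the statement to two one-sided norm estimates on the gap, each of which follows from the norm continuity of the resolution together with the uniform lower bounds supplied by left-invertibility. Fix a point $z_0$ and abbreviate $T:=T_{z_0}$. Since $T$ is left-invertible, it is injective with closed range, so $T$ is a bijection of $\cH_1$ onto $\cK_{z_0}$; by the bounded inverse theorem its inverse (viewed as a map onto its range) is bounded, equivalently there is a constant $c>0$ with $\|Tx\|\ge c\|x\|$ for all $x\in\cH_1$. I write $M:=c^{-1}$ for the norm of that inverse.

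First I would bound the one-sided gap $\delta(\cK_{z_0},\cK_z)$, which measures how far unit vectors of $\cK_{z_0}$ lie from $\cK_z$. Given $y\in\cK_{z_0}$ with $\|y\|=1$, write $y=Tx$; then $\|x\|\le M$, and since $T_zx\in\cK_z$ we obtain
$$\dist(y,\cK_z)\le\|y-T_zx\|=\|(T-T_z)x\|\le M\,\|T-T_z\|.$$
Taking the supremum over such $y$ gives $\delta(\cK_{z_0},\cK_z)\le M\|T-T_z\|$, which tends to $0$ as $z\to z_0$ by continuity of the resolution. Note that this direction uses only the lower bound at the single point $z_0$.

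The hard part is the reverse estimate $\delta(\cK_z,\cK_{z_0})$, because it requires a lower bound $\|T_zx\|\ge c_z\|x\|$ that is \emph{uniform} for $z$ near $z_0$; a priori the constants $c_z$ could degenerate. I would produce this uniform bound by a perturbation argument: from $\|T_zx\|\ge\|Tx\|-\|(T_z-T)x\|\ge(c-\|T_z-T\|)\|x\|$, it follows that for $z$ close enough that $\|T_z-T\|\le c/2$ one has $\|T_zx\|\ge(c/2)\|x\|$. Hence for $y=T_zx\in\cK_z$ with $\|y\|=1$ we get $\|x\|\le 2/c$, and therefore
$$\dist(y,\cK_{z_0})\le\|T_zx-Tx\|\le(2/c)\,\|T_z-T\|,$$
so that $\delta(\cK_z,\cK_{z_0})\le(2/c)\|T_z-T\|\to0$.

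Combining the two estimates shows that the symmetric gap $\hat\delta(\cK_z,\cK_{z_0})=\max\bigl(\delta(\cK_{z_0},\cK_z),\delta(\cK_z,\cK_{z_0})\bigr)$ tends to $0$ as $z\to z_0$, which is precisely continuity of $z\mapsto\cK_z$ in the gap topology. The only genuinely delicate point, and the step I expect to be the main obstacle, is securing the uniform invertibility constant in the third paragraph; once that is in place everything else is the triangle inequality applied to $T$ and $T_z$.
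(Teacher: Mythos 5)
Your proof is correct and follows essentially the same route as the paper: the one-sided estimate $\delta(\cK_{z_0},\cK_z)\le M\|T-T_z\|$ is exactly the paper's Prop.~\ref{pro1} applied with $T=T_{z_0}$, $S=T_z$, and your uniform lower bound $\|T_zx\|\ge (c/2)\|x\|$ near $z_0$ is exactly the paper's stability-of-left-invertibility estimate, which it then combines with Prop.~\ref{pro1} in the other direction. The only difference is organizational --- the paper factors these two steps into separate propositions while you carry them out inline.
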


It is more tricky to define the holomorphy of a Grassmannian-valued function, than to define its continuity (Def. \ref{def-holo}):
\begin{definition}
 $z\mapsto\cK_z\in \Grass(\cH)$ is holomorphic if locally it possesses a holomorphic injective resolution.
\label{def1}\end{definition}
Arguably, this definition seems less satisfactory than that of the (gap) continuity --- but we do not know of any better one.}

Let  $\cC(\cH_1,\cH_2)$ denote the set of closed operators from $\cH_1$ to $\cH_2$. 
 It is natural to ask what is the natural concept of continuity and holomorphy for functions with values in  $\cC(\cH_1,\cH_2)$.

By identifying a closed operator with its graph we transport the gap topology from  $\Grass(\cH_1\oplus\cH_2)$ to $\cC(\cH_1,\cH_2)$.  This yields a natural definition of the continuity of functions with values in closed operators.
There are other possibilities, but the {\em gap topology} seems to be the most natural generalization of the norm topology from $\cB(\cH_1,\cH_2)$ to
 $\cC(\cH_1,\cH_2)$.

{\red We have the following criterion for the continuity of operator valued functions (Prop. \ref{propi}):
\begin{proposition} Consider a function $\Theta\ni z\mapsto T_z\in\cC(\cH_1,\cH_2)$. If there exists a neighbourhood $\Theta_0\subset\Theta$ of $z_0$, a Banach space $\cK$ and a continuous function $\Theta_0\ni z\mapsto W_{z}\in \cB(\cK,\cH_1)$, such that $W_z$ maps bijectively $\cK$ onto $\Dom(T_{z})$ for all $z\in\Theta_0$ and
\[
\Theta_0\ni z\mapsto T_{z}W_{z}\in \cB(\cK,\cH_2)
\]
is continuous, then
 $\Theta\ni z\mapsto T_z\in\cC(\cH_1,\cH_2)$ in the gap topology.
\label{pro2}\end{proposition}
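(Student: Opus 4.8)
The plan is to reduce the statement to Proposition \ref{prop1} by exhibiting an explicit continuous injective resolution of the graph. Recall that the gap topology on $\cC(\cH_1,\cH_2)$ is by definition transported from $\Grass(\cH_1\oplus\cH_2)$ via the graph map $T\mapsto\Gr(T)$; thus proving that $z\mapsto T_z$ is continuous at $z_0$ amounts to proving that $\Theta_0\ni z\mapsto\Gr(T_z)\in\Grass(\cH_1\oplus\cH_2)$ is continuous in the gap topology.

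To this end I would define, for $z\in\Theta_0$, the operator
\[
S_z:\cK\to\cH_1\oplus\cH_2,\qquad S_z\xi=\lin{W_z\xi}{T_zW_z\xi}.
\]
This is meaningful because $W_z$ maps $\cK$ into $\Dom(T_z)$, so that $T_zW_z$ is defined on all of $\cK$; by hypothesis $T_zW_z\in\cB(\cK,\cH_2)$. Since both $z\mapsto W_z$ and $z\mapsto T_zW_z$ are continuous with values in bounded operators, $z\mapsto S_z$ is continuous with values in $\cB(\cK,\cH_1\oplus\cH_2)$.

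The key step is to check that each $S_z$ is left-invertible with $\Ran S_z=\Gr(T_z)$, so that $z\mapsto S_z$ is a continuous injective resolution of $z\mapsto\Gr(T_z)$. Injectivity is immediate: if $S_z\xi=0$ then $W_z\xi=0$, and $W_z$ is injective since it is a bijection onto $\Dom(T_z)$, whence $\xi=0$. For the range, as $\xi$ runs over $\cK$ the vector $W_z\xi$ runs over all of $\Dom(T_z)$ by surjectivity, so $\Ran S_z=\{(x,T_zx):x\in\Dom(T_z)\}=\Gr(T_z)$. This range is closed precisely because $T_z$ is a closed operator, and closedness of the range together with injectivity is exactly left-invertibility (Def. \ref{def-inv}). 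Applying Proposition \ref{prop1} then yields continuity of $z\mapsto\Gr(T_z)$ in the gap topology on $\Theta_0$, in particular at $z_0$, which is the claim.

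I do not anticipate a genuine obstacle. The only subtlety worth flagging is that $W_z$ by itself need not be left-invertible---its range $\Dom(T_z)$ is typically not closed in $\cH_1$ when $T_z$ is unbounded---so one cannot resolve $\Dom(T_z)$ directly in $\cH_1$. Passing to the graph is exactly what restores closedness, since $\Ran S_z=\Gr(T_z)$ is closed whenever $T_z$ is closed, and this is the reason the resolution must be built in $\cH_1\oplus\cH_2$ rather than in $\cH_1$.
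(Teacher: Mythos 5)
Your proposal is correct and is essentially the paper's own proof: the paper likewise observes that $z\mapsto\big(W_z,T_zW_z\big)\in\cB_\linv(\cK,\cH_1\oplus\cH_2)$ is a continuous injective resolution of $z\mapsto\Gr(T_z)$ and then invokes Prop.~\ref{pro1a}(1). The only difference is that you spell out the verification (injectivity, $\Ran S_z=\Gr(T_z)$, closedness of the range from closedness of $T_z$) which the paper leaves implicit.
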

The function  $z\mapsto W_z$ will  be called a {\em resolution of continuity} of $z\mapsto T_z$.}




{\red Similarly, we transport the holomorphy from  $\Grass(\cH_1\oplus\cH_2)$ to $\cC(\cH_1,\cH_2)$, which provides a definition
 of a holomorphic function with values in closed operators (Def \ref{def-holo-op}).
 This definition,  strictly speaking due to T.~Kato \cite{K},
goes back essentially to F.~Rellich \cite{R} and can be 
{ reformulated} as follows (Prop. \ref{func7}):}
\begin{definition}\label{def:hol} A function $\Theta\ni z\mapsto T_z\in\cC(\cH_1,\cH_2)$ is holomorphic around $z_0\in\Theta$ if there exists a neighbourhood $\Theta_0\subset\Theta$ of $z_0$, a Banach space $\cK$ and a holomorphic function $\Theta_0\ni z\mapsto W_{z}\in \cB(\cK,\cH_1)$, such that $W_z$ maps bijectively $\cK$ onto $\Dom(T_{z})$ for all $z\in\Theta_0$ and
\[
\Theta_0\ni z\mapsto T_{z}W_{z}\in \cB(\cK,\cH_2)
\]
is holomorphic.
\label{defo2}\end{definition}

At first glance Def. \ref{defo2} may seem somewhat artificial, especially when compared with Def. \ref{defo1}, which looks as natural as possible. In particular, it involves a relatively arbitrary function $z\mapsto W_z$, { which will be called a {\em resolution of holomorphy} of $z\mapsto T_z$}.
The arbitrariness of { a  resolution of holomorphy}  is rarely a practical problem, because there exists a convenient criterion which works when {$\cH_1=\cH_2$ and} $T_z$ has a non-empty resolvent set (which is usually the case in applications). It is then enough to check the holomorphy of the resolvent of $T_z$. This criterion is however useless for $z\in\Theta$ corresponding to $T_z$ with an empty resolvent set, { or simply when $\cH_1\neq\cH_2$}. In this case, at least in the context of Hilbert spaces, our criterion given in Prop. \ref{qeq9} could be {particularly} useful.

{\red
Note that we use the word {\em resolution}  in two somewhat different contexts -- that of the Grassmannian (Prop. \ref{prop1} and Def. \ref{def1}) and that of closed operators (Prop. \ref{pro2} and Def. \ref{defo2}). Moreover, in the case of the continuity the corresponding object  gives a criterion
(Props. \ref{prop1} and  \ref{pro2}), whereas  in the case of the holomorphy it provides a definition (Defs. \ref{def1} and \ref{defo2}).} 

{\red The concept of the holomorphy for closed operators seems more complicated  than that of the continuity, and also  than the holomorphy of bounded operators.
Even the seemingly simpliest questions, such as the unique continuation or the validity of the Schwarz reflection principle, are somewhat tricky to prove.}

\subsection*{Examples} As an illustration, let us give a number of
 examples of holomorphic functions with values in closed operators. 
\begin{enumerate}
\item Let $z\mapsto T_z\in\cC(\cH_1,\cH_2)$ be a function. Assume that $\Dom(T_z)$ does not depend on $z$ and $T_z x$ is holomorphic for each $x\in \Dom(T_z)$. Then such function $z\mapsto T_z$ is holomorphic and it is called  a \emph{holomorphic family of type A}. Type A families inherit many good properties from Banach space- valued holomorphic functions and provide the least pathological class of examples.
\item\label{ex2}
Let $A\in\cC(\cH)$ have a nonempty resolvent set. Then $z\mapsto (A-z\one)^{-1}\in\cB(\cH)$ is holomorphic away from the spectrum of $A$. However, $z\mapsto (A-z\one)^{-1}\in\cC(\cH)$ is  holomorphic away from the point spectrum of $A$, see Example \ref{exa+}.  This shows that a nonextendible holomorphic function with values in bounded operators can have an extension when we allow unbounded values.
\item
Consider the so-called Stark Hamiltonian on $L^2(\R)$
\[H_z:=-\partial_x^2+zx.\]
(Here $x$ denotes the variable in $\R$ and $z$ is a complex parameter). For $z\in\R$ one can define $H_z$ as a self-adjoint operator with $\sp H_z=\R$. $H_z$ is also naturally defined for 
 $z\in\C\backslash\R$, and then has an empty spectrum { \cite{herbst}}. Thus in particular all non-real numbers belong to the resolvent set of $H_z$. One can show that $z\mapsto H_z$ is holomorphic only outside of the real line. On the real line it is even not continuous.
\item Consider the Hilbert space $L^2([0,\infty[)$. For $m>1$ the Hermitian operator
\beq H_m:=-\partial_x^2+\Big(m^2-\frac14\Big)\frac1{x^2}\label{defo3}\eeq
is essentially self-adjoint on $C_{\rm c}^\infty(]0,\infty[)$.  We can continue holomorphically (\ref{defo3}) onto the halfplane $\{\Re m>-1\}$. For all such $m$ the spectrum of  $H_m$ is $[0,\infty[$. In \cite{BDG} it was asked whether $m\mapsto H_m$ can be extended to the left of the line $\Re m=-1$. If this is the case, then on this line the spectrum will cover the whole $\C$ and the point $m=-1$ will be a singularity. We still do not know what is the answer to this question. We hope that the method developed in this paper will help to solve the above problem.
\end{enumerate}

\subsection*{Main results and structure of the paper} We start by introducing 
in Sec. \ref{sec2} the basic definitions and facts on (not necessarily orthogonal) projections in a linear algebra context.

Sec. \ref{sec3} contains the essential part of the paper. We first recall the definition of the gap topology on the Grassmanian and demonstrate how it can be characterized in terms of continuity of projections. It turns out that a significant role is played by the assumption that the subspaces are complementable. We then discuss holomorphic functions with values in the Grassmanian and explain  how this notion is related to Rellich's definition of operator-valued holomorphic functions. Most importantly, we deduce a result on the validity of the Schwarz reflection principle in Thm \ref{cor:adjointhol} and we recover Bruk's result on the uniqueness of analytic continuation in Thm. \ref{theorem:bruk}.

In Sec. \ref{sec4} we consider the case of operators on Hilbert spaces. We derive explicit formulae for projections on the graphs of closed operators and deduce a criterion for the holomorphy of operator-valued functions, which is also valid for operators with empty resolvent set (Prop. \ref{qeq9}).

In Sec. \ref{sec5} we give various sufficient conditions for the continuity and holomorphy of the product and sum of operator-valued functions. More precisely, we assume that $z\mapsto A_z, B_z$ are holomorphic and $A_z B_z$ is closed. 
The simpliest cases when $z\mapsto A_z B_z$ is holomorphic are discussed in Prop. \ref{criterion1} ($A_z$ boundedly invertible or $B_z$ bounded) and Prop. \ref{criterion2} ($A_z B_z$ densely defined and $B_{\bar z}^* A_{\bar z}^*$ closed). More sufficient conditions are given in Thm. \ref{theorem:holoproduct} ($\rs(A_zB_z)\cap\rs(B_z A_z)\neq\emptyset$) and Thm. \ref{thm:rancase} (${\rm Dom}(A_z)+\ran B_z=\cH$). A result on sums is contained in Thm. \ref{thm:sum}.

\subsection*{Bibliographical notes} The standard textbook reference for continuous and holomorphic operator-valued function is the book of T.~Kato \cite{K}. Most of the results are however restricted to either holomorphic families of type A or to operators with non-empty resolvent set. The first proof of the uniqueness of a holomorphic continuation outside of these two classes is due to V.M.~Bruk \cite{B}. The strategy adopted in our paper is to a large extent a generalization of \cite{B}.

Holomorphic families of subspaces were introduced by M.~Shubin \cite{shubin}; the definition was then reworked by I.C.~Gohberg and J.~Leiterer, see \cite{GL} and references therein. We use the definition from \cite[Ch.6.6]{GL}. It is worth mentioning that the original motivation for considering families of subspaces depending holomorphically on a parameter comes from problems in bounded operator theory, such as the existence of a holomorphic right inverse of a given function with values in right-invertible bounded operators, see for instance \cite{kab,LR} for recent reviews. 

The gap topology was investigated by many authors, eg. \cite{berkson,DRW,GM,GL}, however some of the results that we obtain using non-orthogonal projections appear to be new. A special role in our analysis of functions with values in the Grassmannian of a Banach space is played by subspaces that possess a complementary subspace, a review on this subject can be found in \cite{KM}.

\subsection*{Limitations and issues}The holomorphy of functions with values in closed operators is a nice 
and natural concept. We are, however, aware of some limitations of its practical value. Consider for instance the Laplacian $\Delta$ on $L^2(\R^d)$. As discussed in Example (\ref{ex2}), the resolvent $z\mapsto(z\one+\Delta)^{-1}$ extends to an entire holomorphic function. On the other hand, for many practical applications to spectral and scattering theory of Schr\"odinger operators another fact is much more important. Consider, for example, odd $d$ and $f\in C_{\rm c}(\R^d)$. Then \[z\mapsto f(x)(z\one+\Delta)^{-1}f(x)\] extends to a multivalued holomorphic function, and to make it single valued, one needs to define it on the Riemann surface of the square root (the double covering of $\C\backslash\{0\}$). 
The extension of this function to the second sheet of this Riemann surface (the so called {\em non-physical sheet of the complex plane}) plays an important role in the theory of resonances (cf. eg. \cite{zworski}). It is however different from what one obtains from the extension of $z\mapsto(z\one+\Delta)^{-1}$ in the sense of holomorphic functions with values in closed operators.


Further issues are due to the fact that typical assumptions considered, eg., in perturbation theory, do not allow for a good control of the holomorphy. For instance,
we  discuss situations where seemingly natural assumptions on $T_z$ and $S_z$ do not ensure that the product $T_z S_z$ defines a holomorphic function. 

\subsection*{Applications and outlook}The main advantage of the holomorphy in the sense of Definition \ref{defo2} is that it uses only the basic structure of the underlying Banach space (unlike in the  procedure discussed before on the example of the resolvent of the Laplacian). 

Despite  various problems that can appear in the general case, we conclude from our analysis that there are classes of holomorphic functions which enjoy particularly good properties. This is for instance the case for functions whose values are Fredholm operators. We prove in particular that the product of two such functions functions is again holomorphic. In view of this result it is worth mentioning that the Fredholm analytic theorem (see e.g. \cite[Thm. D.4]{zworski}), formulated usually  for bounded operators, extends directly to the unbounded case. It seems thus interesting to investigate further consequences of these facts.

On a separate note, we expect that in analogy to the analysis performed in \cite{BDG} for the operator $-\partial_x^2+z{x^{-2}}$, specific operator-valued holomorphic functions should play a significant role in the description of self-adjoint extensions of exactly solvable Schr\"odinger operators, listed explicitly in \cite{DW} in the one-dimensional case. 

A problem not discussed here is the holomorphy of the closure of a given function with values in (non-closed) unbounded operators. Such problems often appear in the context of products of holomorphic functions with values in closed operators, and one can give many examples when the product has non-closed values, but the closure yields a holomorphic function. A better understanding of this issue could lead to useful improvements of the results of the present paper.\medskip

\textbf{Acknowledgments.} 
 The research of J.D.  was supported in part by the National Science Center (NCN) grant No. 2011/01/B/ST1/04929.

\section{Linear space theory}\label{sec2}

Throughout this section 
 $\cK,{\red \cK'},\cH$ are linear spaces.

\subsection{Operators}

\begin{definition} 
{ By a {\em linear operator} $T$ {\em from} $\cK$ {\em to} $\cH$ (or simply an {\em operator on $\cK$}, if $\cK=\cH$) we will mean a {\em
  linear function  $T:\Dom T\to\cH$}, where $\Dom T$ is a linear
subspace of $\cK$.} In the usual way we define its kernel $\Ker T$, which is a subspace of $\Dom T$ and its range $\Ran T$, which is 
a subspace of $\cH$. 
{ If $\Dom T=\cK$, then we will say that $T$ is {\em everywhere defined}.} \end{definition}

\begin{definition} 
{ We will write $\cL(\cK,\cH)$ for the space of linear everywhere defined operators from $\cK$ to $\cH$. } We set $\cL(\cH):=\cL(\cH,\cH)$. \end{definition}

\begin{definition}\label{jot}
 If $\cX$ is a subspace of $\cH$, let $J_\cX$ denote the embedding of $\cX$ into $\cH$. 
\end{definition}

\begin{definition}
 We will write $\tilde T$ for the operator $T$ understood as 
 an operator on $\cK$ to $\Ran T$.
\end{definition}

Clearly, $T=J_{\Ran T}\tilde T.$

If an operator $T$ { from $\cK$ to $\cH$} is injective we can define the operator
$T^{-1}$ on $\cH$ to $\cK$ with $\Dom T^{-1}=\Ran T$ and $\Ran T^{-1}=\Dom T$.
Clearly,
\[T^{-1}T=\one_{\Dom
  T},\ \ T
T^{-1}=\one_{\Ran T}.\]

Often, instead of $T^{-1}$ we will prefer to use
 $\tilde
T^{-1}:\Ran T\to\cK$, whose advantage is that it is everywhere defined.

\begin{definition} 
If $T$,$S$ are two operators, their  product $TS$ is defined in the usual way on the domain
\[
\Dom(TS)=S^{-1}\Dom(T)=\{ v\in \Dom(S) : \ S v\in\Dom(T) \}.
\]
(The notation `$S^{-1}\Dom(T)$' is understood as in the last equality
above, so that $S$ is not required to be injective).  
The operator $T+S$ is defined in the obvious way with  $\Dom(T+S)=\Dom T\cap\Dom S$.
\label{prodi}\end{definition}

\subsection{Projections}

\begin{definition} Let $\cX,\cY$ be two subspaces of $\cH$ with $\cX\cap\cY=\{0\}$. (We do not require that $\cX+\cY=\cH$.) We will write $ P_{\cX,\cY}\in\cL(\cX+\cY)$ for the
idempotent with range $\cX$ and kernel $\cY$.
 We will say that it is the {\em projection onto $\cX$ along $\cY$}.
\eed

\bed
If  $\cX\cap\cY=\{0\}$ and
 $\cX+\cY=\cH$, 
 we will say that $\cX$ is {\em complementary} to $\cY$, and write $\cX\oplus\cY=\cH$.
In such a case $P_{\cX,\cY}\in\cL
(\cH)$. 
\end{definition} 

 Clearly
 $\one-P_{\cX,\cY}=P_{\cY,\cX}$.

The following proposition  describes a useful formula for the  projection corresponding to a pair of subspaces: 
\begin{proposition}\label{pwo}
Let  $J\in\cL({\red \cK'},\cH)$ be  injective. Let $I\in\cL(\cH,\cK)$ be surjective. Then $IJ$ is bijective iff 
\[\Ran J\oplus \Ker I=\cH,\]
and then we have the following formula for
 the projection onto $\Ran J$ along $\Ker I$:
\begin{equation}
P_{\Ran J,\Ker I}=J(IJ)^{-1}I\label{pq2}
\end{equation}\end{proposition}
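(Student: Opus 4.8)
The plan is to prove the biconditional and the formula in one sweep by reducing everything to the everywhere-defined bijection $IJ \in \cL(\cK', \cK)$. Since $J$ is injective and $I$ is surjective, the composition $IJ$ is an everywhere-defined operator from $\cK'$ to $\cK$; the content of the statement is to relate its bijectivity to the direct sum decomposition $\Ran J \oplus \Ker I = \cH$.

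First I would establish injectivity of $IJ$. For $x \in \cK'$ we have $IJx = 0$ iff $Jx \in \Ker I$. Since $J$ is injective and $\Ran J \ni Jx$, the condition $\Ran J \cap \Ker I = \{0\}$ is exactly what forces $Jx = 0$, hence $x = 0$. Conversely, a nonzero element $v \in \Ran J \cap \Ker I$ would be of the form $v = Jx$ with $x \neq 0$ and $IJx = Iv = 0$, destroying injectivity. So $IJ$ injective $\Leftrightarrow$ $\Ran J \cap \Ker I = \{0\}$. Next I would handle surjectivity of $IJ$. Given $w \in \cK$, surjectivity of $I$ gives some $h \in \cH$ with $Ih = w$; solving $IJx = w$ amounts to finding $x$ with $Jx \equiv h$ modulo $\Ker I$, i.e. $h - Jx \in \Ker I$. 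Such an $x$ exists for every $h$ precisely when $\Ran J + \Ker I = \cH$. So, assuming the injectivity already in force, $IJ$ surjective $\Leftrightarrow$ $\Ran J + \Ker I = \cH$. Combining the two equivalences yields: $IJ$ bijective $\Leftrightarrow$ $\Ran J \oplus \Ker I = \cH$, which is the first claim.

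For the formula, assume $IJ$ is bijective and set $Q := J(IJ)^{-1}I \in \cL(\cH)$, which is everywhere defined since $I$ and $(IJ)^{-1}$ and $J$ are. I would verify that $Q = P_{\Ran J, \Ker I}$ by checking idempotency, the range, and the kernel. Idempotency is immediate: $Q^2 = J(IJ)^{-1}(IJ)(IJ)^{-1}I = J(IJ)^{-1}I = Q$. For the range, $\Ran Q \subset \Ran J$ trivially, and for the reverse inclusion any $Jx$ satisfies $Q(Jx) = J(IJ)^{-1}(IJ)x = Jx$, so $\Ran J \subset \Ran Q$ and in fact $Q$ fixes $\Ran J$ pointwise. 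For the kernel, $Qv = 0$ iff $(IJ)^{-1}Iv = 0$ (using injectivity of $J$) iff $Iv = 0$ (applying $IJ$), i.e. $\Ker Q = \Ker I$. An idempotent fixing $\Ran J$ pointwise with kernel $\Ker I$ is exactly the projection onto $\Ran J$ along $\Ker I$, establishing \eqref{pq2}.

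I do not anticipate a genuine obstacle here, as the argument is purely algebraic; the only point requiring care is bookkeeping around the \emph{everywhere-defined} inverse $\tilde T^{-1}$ versus $T^{-1}$ introduced earlier, since $(IJ)^{-1}$ must be read as the inverse of the bijection $IJ$ on all of $\cK$, so that the composite $J(IJ)^{-1}I$ is genuinely defined on all of $\cH$. I would be explicit that surjectivity of $I$ and bijectivity of $IJ$ are what guarantee $Q \in \cL(\cH)$ rather than a merely partially defined operator.
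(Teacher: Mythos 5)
Your proposal is correct and follows essentially the same purely algebraic route as the paper: the formula is verified by checking that $J(IJ)^{-1}I$ is an idempotent with range $\Ran J$ and kernel $\Ker I$, and the biconditional rests on the same observations (nontrivial $\Ran J\cap\Ker I$ obstructs injectivity of $IJ$, and $\Ran J+\Ker I\neq\cH$ obstructs surjectivity), which the paper phrases contrapositively while you phrase them as direct equivalences. The only cosmetic difference is that the paper lets the idempotent itself yield the decomposition in the forward direction, whereas you establish the decomposition first and the formula second; the mathematical content is identical.
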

\proof $\Rightarrow$: $J(IJ)^{-1}$ is injective. Hence
$\Ker J(IJ)^{-1}I=\Ker I$.

 $(IJ)^{-1}I$ is surjective. Hence
$\Ran J(IJ)^{-1}I=\Ran J$.
Clearly,
\[ (J(IJ)^{-1}I)^2= J(IJ)^{-1}I.\]

$\Leftarrow$: Suppose that $IJ$ is not injective. Clearly, $J\Ker IJ\subset \Ker I\cap\Ran J$. But $J$ is injective. Hence, 
$\Ker I\cap\Ran J\neq\{0\}$.

Suppose that $IJ$ is not surjective. Clearly, $\Ran IJ=I(\Ran J+\Ker I)$. Since $I$ is surjective, 
$\Ran J+\Ker I\neq\cH$.\qeds

\begin{proposition}\label{pro3} Let $\cX,\cX',\cY$ be subspaces of $\cH$, with $\cX$ complementary to $\cY$. Then  $\tilde P_{\cX,\cY}J_{\cX'} $ is bijective iff
$\cX'$ and $\cY$ are complementary,
and then 
we have the following formula for  the projection onto $\cX'$ along $\cY$:
\begin{equation}
P_{\cX',\cY} =J_{\cX'}(\tilde P_{\cX,\cY}J_{\cX'})^{-1}\tilde P_{\cX,\cY}.\label{pq}\end{equation}
\end{proposition}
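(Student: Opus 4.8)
The plan is to deduce this directly from Proposition \ref{pwo} by the right identification of the operators involved. Concretely, I would apply that proposition with $\cK':=\cX'$ and $\cK:=\cX$, taking for the injective operator $J:=J_{\cX'}\in\cL(\cX',\cH)$ and for the surjective operator $I:=\tilde P_{\cX,\cY}\in\cL(\cH,\cX)$.

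First I would check that these choices satisfy the hypotheses of Proposition \ref{pwo}. The embedding $J_{\cX'}$ is injective with $\Ran J_{\cX'}=\cX'$. Since $\cX$ is complementary to $\cY$, the projection $P_{\cX,\cY}$ is a well-defined element of $\cL(\cH)$ with range $\cX$ and kernel $\cY$; hence $\tilde P_{\cX,\cY}$, viewed as an operator from $\cH$ onto its range $\cX$, is surjective and satisfies $\Ker\tilde P_{\cX,\cY}=\cY$. Thus $J$ is injective and $I$ is surjective, as required.

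With these identifications one has $IJ=\tilde P_{\cX,\cY}J_{\cX'}$, together with $\Ran J=\cX'$ and $\Ker I=\cY$. Proposition \ref{pwo} then yields that $\tilde P_{\cX,\cY}J_{\cX'}$ is bijective if and only if $\Ran J\oplus\Ker I=\cH$, i.e.\ $\cX'\oplus\cY=\cH$, which is exactly the assertion that $\cX'$ and $\cY$ are complementary. Moreover, in that case formula (\ref{pq2}) gives
\[
P_{\cX',\cY}=P_{\Ran J,\Ker I}=J(IJ)^{-1}I=J_{\cX'}(\tilde P_{\cX,\cY}J_{\cX'})^{-1}\tilde P_{\cX,\cY},
\]
which is precisely the claimed identity (\ref{pq}).

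I do not expect any substantial obstacle here: the content of the proposition is already packaged in Proposition \ref{pwo}, and the only care required is the bookkeeping of domains and codomains---in particular recognizing $\tilde P_{\cX,\cY}$ as a surjection onto $\cX$ whose kernel is exactly $\cY$, and matching $\Ran J$ and $\Ker I$ with $\cX'$ and $\cY$. The one point worth double-checking is that the idempotent $P_{\Ran J,\Ker I}$ produced by (\ref{pq2}) really is the projection onto $\cX'$ along $\cY$ and not merely some idempotent with the correct range and kernel; but this is guaranteed by the conclusion of Proposition \ref{pwo} itself, so no extra argument is needed.
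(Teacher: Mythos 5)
Your proposal is correct and is essentially identical to the paper's own proof: the paper likewise observes that $J_{\cX'}\in\cL(\cX',\cH)$ is injective with $\Ran J_{\cX'}=\cX'$ and that $\tilde P_{\cX,\cY}\in\cL(\cH,\cX)$ is surjective with $\Ker\tilde P_{\cX,\cY}=\cY$, so that the statement is a special case of Prop.~\ref{pwo}. Your write-up just spells out the bookkeeping in more detail; no gap.
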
{\red\proof $J_{\cX'}\in \cL(\cX',\cH)$ is injective and $\Ran J_{\cX'}=\cX'$. $\tilde P_{\cX,\cY}\in \cL(\cH,\cX)$ is surjective and $\Ker \tilde P_{\cX,\cY}=\cY$. So the proposition is a special case of Prop. \ref{pwo}. \qed}

\section{Banach space theory}\label{sec3}

Throughout this section 
 $\cK,{\red\cK',}\cH,\cH_1,\cH_2$ are Banach spaces. We will use the notation  $\cX^{\rm cl}$ for the closure of a {subset} $\cX$. { Similarly, for a closable operator $T$, we use the notation $T^{\rm  cl}$ for its closure.

 $\cB(\cK,\cH)$ will denote the space of bounded everywhere defined operators from $\cK$ to $\cH$. $\cC(\cK,\cH)$ will denote the space of closed operators from $\cK$ to $\cH$. We will write $\cB(\cH)=\cB(\cH,\cH)$ and $\cC(\cH)=\cC(\cH,\cH)$.}

\subsection{Closed range theorem}

Below we recall 
one of the most useful theorems of operator theory:
\begin{theorem}[Closed range theorem]
Let $T$ be an injective operator { from $\cK$ to $\cH$.} Then the following are equivalent:
\begin{enumerate}
\item $ \Ran T$ is closed and $T\in\cC(\cK,\cH)$;
\item $\Ran T$ is closed    and $\tilde T^{-1}$ is bounded;
\item  $\Ran T$ is closed    and \beq
\inf\limits_{x\neq0}\frac{\|Tx\|}{\|x\|}>0.\label{pwq}\eeq
\end{enumerate}
Besides, the number defined in (\ref{pwq}) is $\|\tilde T^{-1}\|^{-1}$.
\label{linv}
\end{theorem}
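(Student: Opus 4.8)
The plan is to treat the common clause ``$\Ran T$ is closed'' as a standing hypothesis and to split the equivalence into two essentially independent pieces: an elementary metric computation that handles $(2)\Leftrightarrow(3)$ together with the final quantitative claim, and a single application of the Closed Graph Theorem that handles $(1)\Leftrightarrow(2)$.

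First I would record the basic norm identity. For $x\in\Dom T\setminus\{0\}$ set $y=Tx$; injectivity of $T$ makes this a bijection of $\Dom T\setminus\{0\}$ onto $\Ran T\setminus\{0\}$ with inverse $x=\tilde T^{-1}y$, so that $\frac{\|Tx\|}{\|x\|}=\frac{\|y\|}{\|\tilde T^{-1}y\|}$. Taking the infimum over $x$ then gives $\inf_{x\neq0}\frac{\|Tx\|}{\|x\|}=\big(\sup_{0\neq y\in\Ran T}\frac{\|\tilde T^{-1}y\|}{\|y\|}\big)^{-1}=\|\tilde T^{-1}\|^{-1}$, with the convention that an unbounded $\tilde T^{-1}$ has infinite norm. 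This establishes the ``Besides'' assertion outright, and, read as the statement that $\inf>0$ iff $\|\tilde T^{-1}\|<\infty$, it proves $(2)\Leftrightarrow(3)$. Note that this step uses neither closedness of $T$ nor closedness of $\Ran T$; it is purely metric.

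It then remains to prove $(1)\Leftrightarrow(2)$ under the hypothesis that $\Ran T$ is closed, so that $\Ran T$ is itself a Banach space. The device is the flip $\cK\oplus\cH\ni(u,v)\mapsto(v,u)\in\cH\oplus\cK$, a homeomorphism carrying $\Gr(T)=\{(x,Tx)\}$ onto $\{(Tx,x)\}$, which is exactly the graph of $\tilde T^{-1}:\Ran T\to\cK$; hence $T$ is closed iff $\tilde T^{-1}$ has a closed graph. For $(1)\Rightarrow(2)$, assuming $T$ closed, $\tilde T^{-1}$ is an everywhere-defined operator from the Banach space $\Ran T$ into $\cK$ with closed graph, so the Closed Graph Theorem gives its boundedness. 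For $(2)\Rightarrow(1)$, if $\tilde T^{-1}$ is bounded and $x_n\to x$, $Tx_n\to y$, then $y\in\Ran T$ since $\Ran T$ is closed, whence $x=\lim\tilde T^{-1}(Tx_n)=\tilde T^{-1}y\in\Dom T$ and $Tx=y$, so $\Gr(T)$ is closed.

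The only non-elementary input is the Closed Graph Theorem, invoked once in $(1)\Rightarrow(2)$, and its hypotheses are met precisely because closedness of $\Ran T$ turns it into a Banach space on which $\tilde T^{-1}$ is everywhere defined. I expect the main subtlety to be bookkeeping rather than depth: one must consistently distinguish $\tilde T^{-1}$ (everywhere defined on $\Ran T$) from the bare set-theoretic inverse, and check that ``graph closed in $\cH\oplus\cK$'' coincides with ``graph closed in $\Ran T\oplus\cK$'', which holds because $\Ran T$ is closed in $\cH$. No circularity arises, since the norm identity is metric and logically prior to any completeness argument.
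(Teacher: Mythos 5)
Your proof is correct; note that the paper itself merely recalls Theorem \ref{linv} as a standard fact (``one of the most useful theorems of operator theory'') and gives no proof, so there is no internal argument to compare against. Your route --- the purely metric identity $\inf_{x\neq0}\|Tx\|/\|x\|=\|\tilde T^{-1}\|^{-1}$, which settles (2)$\Leftrightarrow$(3) together with the final quantitative claim and uses no closedness hypotheses, followed by a single application of the Closed Graph Theorem to $\tilde T^{-1}$, everywhere defined on the Banach space $\Ran T$, which settles (1)$\Leftrightarrow$(2) --- is the canonical proof of this result, and the bookkeeping points you single out (the flip identification of $\Gr(T)$ with the graph of $\tilde T^{-1}$, and the agreement of closedness of that graph in $\cH\oplus\cK$ with closedness in $\Ran T\oplus\cK$, valid precisely because $\Ran T$ is closed) are exactly the places needing care and are handled correctly.
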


\begin{definition} Operators satisfying the conditions of Thm \ref{linv} will be called {\em left invertible}. The family of such operators
will be denoted by
$\cC_\linv(\cK,\cH)$. We will write 
$\cB_\linv(\cK,\cH)=\cB(\cK,\cH)\cap
\cC_\linv(\cK,\cH)$.

 {An} {operator $T$ satisfying the conditions of Thm \ref{linv} and such that   $\Ran T=\cH$ is called {\em invertible}.}
The family of such operators
will be denoted by
$\cC_\inv(\cK,\cH)$. We will write 
$\cB_\inv(\cK,\cH)=\cB(\cK,\cH)\cap
\cC_\inv(\cK,\cH)$.
\label{def-inv}
\end{definition}
Clearly, $T$ is left invertible iff $\tilde T$ is invertible. The next proposition shows that left invertibility is stable under bounded perturbations.

\begin{proposition}
Let $T\in\cC_\linv(\cK,\cH)$, $S\in\cB(\cK,\cH)$ and
$\|S \|<\|{\tilde T^{-1}}\|^{-1}$. Then  $T+S\in\cC_\linv(\cK,\cH)$, and
\beq \Big|\|\widetilde{(T+S)}^{-1}\|^{-1}-\|\tilde
T^{-1}\|^{-1}\Big|\leq\|S\|.\label{wew}\eeq 
Consequently,  $\cB_\linv(\cK,\cH)\ni T\mapsto\|\tilde T^{-1}\|$ is a
continuous function and $\cB_\linv(\cK,\cH)$ is an open subset of
$\cB(\cK,\cH)$. 
\end{proposition}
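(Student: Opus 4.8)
The plan is to establish the quantitative estimate \eqref{wew} first, since the openness of $\cB_\linv$ and the continuity of $T\mapsto\|\tilde T^{-1}\|$ both follow immediately from it. The key tool is Thm. \ref{linv}: for a left-invertible operator, $\|\tilde T^{-1}\|^{-1}$ equals the infimum $\inf_{x\neq 0}\|Tx\|/\|x\|$ appearing in \eqref{pwq}. So rather than manipulating inverses directly, I would work with this infimum, which behaves like a ``distance to non-injectivity'' and is much easier to perturb. Write $m(T):=\inf_{x\neq 0}\|Tx\|/\|x\| = \|\tilde T^{-1}\|^{-1}$.

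The estimate $|m(T+S)-m(T)|\leq\|S\|$ is then a one-line reverse-triangle-inequality argument at the level of the infimum. For any $x\neq 0$,
\[
\|(T+S)x\|\geq \|Tx\|-\|Sx\|\geq \big(m(T)-\|S\|\big)\|x\|,
\]
and taking the infimum over $x$ gives $m(T+S)\geq m(T)-\|S\|$; the symmetric bound $m(T)\geq m(T+S)-\|S\|$ follows by writing $T=(T+S)+(-S)$. Since the hypothesis $\|S\|<m(T)$ guarantees $m(T+S)\geq m(T)-\|S\|>0$, we get both injectivity of $T+S$ and the positivity of its infimum. Here one must be a little careful with domains: $S\in\cB(\cK,\cH)$ is everywhere defined, so $\Dom(T+S)=\Dom T$ and the perturbed operator is injective on the same domain; this is the point where boundedness and everywhere-definedness of $S$ are used.

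To upgrade $m(T+S)>0$ to genuine left invertibility via Thm. \ref{linv}, I still need to verify that $\Ran(T+S)$ is closed and $T+S$ is closed. Closedness of $T+S$ is standard: $T$ is closed and $S$ is bounded, so $T+S\in\cC(\cK,\cH)$. For closedness of the range, the cleanest route is to observe that condition (3) of Thm. \ref{linv} is formally an equivalence, so once I know $T+S\in\cC(\cK,\cH)$ and $m(T+S)>0$, I should argue directly that the lower bound $\|(T+S)x\|\geq c\|x\|$ with $c>0$ together with closedness forces $\Ran(T+S)$ to be closed --- if $(T+S)x_n\to y$ is Cauchy in the range, then $x_n$ is Cauchy by the lower bound, hence converges to some $x$, and closedness gives $x\in\Dom(T+S)$ with $(T+S)x=y$. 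This is the main obstacle in the sense that it requires one to check that the three conditions of Thm. \ref{linv} are really available as an iff applied to $T+S$, rather than just invoking the numerical value.

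Finally, \eqref{wew} is exactly $|m(T+S)-m(T)|\leq\|S\|$ rewritten with $m=\|\tilde T^{-1}\|^{-1}$. For the concluding assertions, continuity of $\cB_\linv(\cK,\cH)\ni T\mapsto\|\tilde T^{-1}\|$ follows because $T\mapsto m(T)=\|\tilde T^{-1}\|^{-1}$ is $1$-Lipschitz by \eqref{wew} and bounded away from $0$ locally, so its reciprocal is continuous; and openness of $\cB_\linv(\cK,\cH)$ in $\cB(\cK,\cH)$ is immediate since any $S$ with $\|S\|<m(T)$ keeps $T+S$ left invertible, exhibiting a norm ball around $T$ inside $\cB_\linv(\cK,\cH)$.
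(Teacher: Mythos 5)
Your proposal is correct and follows essentially the same route as the paper: the reverse-triangle-inequality bound on $m(T)=\inf_{x\neq0}\|Tx\|/\|x\|=\|\tilde T^{-1}\|^{-1}$, followed by swapping the roles of $T$ and $T+S$ to get the two-sided estimate \eqref{wew}, from which continuity and openness are immediate. The only difference is one of explicitness: the paper's proof silently invokes Thm.~\ref{linv} at the step ``lower bound $>0$ therefore left invertible,'' whereas you spell out the needed verifications (closedness of $T+S$ under bounded perturbation and the Cauchy-sequence argument showing the lower bound plus closedness forces $\Ran(T+S)$ to be closed), which is a sound and slightly more careful rendering of the same argument.
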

\proof We have the lower bound
\[\|(T+S)x\|\geq\|Tx\|-\|Sx\|\geq(\|\tilde
T^{-1}\|^{-1}-\|S\|)\|x\|.\]
But $\|\tilde
T^{-1}\|^{-1}-\|S\|>0$, therefore, $T+S$ is left invertible and
\[\|\widetilde{(T+S)}^{-1}\|^{-1}\geq\|\tilde T^{-1}\|^{-1}-\|S\|.\]
Then we switch the roles of $T$ and $T+S$, and obtain
\[\|\tilde T^{-1}\|^{-1}\geq\|\widetilde{(T+S)}^{-1}\|^{-1}-\|S\|,\]
which proves (\ref{wew}). \qed

\bed { The {\em resolvent set} of an operator $T$ on $\cH$, denoted $\rs\,T$, is defined to be the set of all $\lambda\in\C$ such that $T-\lambda\one\in\cC_\inv(\cH)$.}
 The {\em spectrum} of $T$ is by definition the set $\sp\,T:=\C\setminus \rs\,T$.
\eed
Note that according to this definition {(used for instance in \cite{D,GGK,HKM})}, $\rs\,T\neq\emptyset$ implies that $T$ is a closed operator (note that this differs from the terminology used in \cite{K}).



\subsection{Bounded projections}

Let $\cX,\cY$ be subspaces of $\cH$ with $\cX\cap\cY=\{0\}$. The operator $P_{\cX,\cY}$ is bounded iff $\cX$, $\cY$ and $\cX+\cY$ are closed.

\begin{definition} Let $\Pro(\cH)$ denote the set of bounded projections on $\cH$. 
 Let $\Grass(\cH)$ denote the set of closed   subspaces of $\cH$ (the
 {\em Grassmannian} of $\cH$). 
\end{definition}

The first part of Prop \ref{pwo} can be adapted to the Banach space
setting as follows:
\begin{proposition}\label{pwo1}
Let  $J\in\cB_\linv({\red\cK'},\cH)$. Let $I\in\cB(\cH,\cK)$ be surjective. 
Clearly, $\Ran J,\Ker I$ are closed.
Then $IJ$ is invertible iff 
\[\Ran J\oplus \Ker I=\cH.\]
\end{proposition}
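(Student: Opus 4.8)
The plan is to reduce Proposition \ref{pwo1} to the purely algebraic Proposition \ref{pwo}, and then upgrade the algebraic equivalence ``$IJ$ bijective $\Leftrightarrow \Ran J \oplus \Ker I = \cH$'' to the Banach-space statement ``$IJ$ invertible (in the sense of Def. \ref{def-inv}) $\Leftrightarrow \Ran J \oplus \Ker I = \cH$''. Note first that since $J \in \cB_\linv(\cK',\cH)$ is injective and $I \in \cB(\cH,\cK)$ is surjective, the hypotheses of Prop. \ref{pwo} are met (with the linear-space roles played by the Banach spaces), so purely algebraically $IJ$ is bijective iff $\Ran J \oplus \Ker I = \cH$. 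The remaining content is therefore the claim that, for this particular bounded operator $IJ \in \cB(\cK', \cK)$, algebraic bijectivity coincides with invertibility in $\cC_\inv(\cK',\cK)$.

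First I would observe that $IJ$ is automatically a bounded operator, being the composition of two bounded operators. The direction I would handle carefully is: if $IJ$ is bijective as a linear map, then it is invertible in the sense of Def. \ref{def-inv}, i.e. its algebraic inverse $(IJ)^{-1}$ is bounded. Since $IJ$ is a bounded, everywhere-defined, bijective operator between the Banach spaces $\cK'$ and $\cK$, this is exactly the content of the open mapping theorem (equivalently, the bounded inverse theorem): a bounded bijection between Banach spaces has bounded inverse. Conversely, if $IJ \in \cC_\inv(\cK',\cK)$ then in particular it is bijective as a map, so the algebraic equivalence of Prop. \ref{pwo} applies directly.

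Concretely, the steps would be: (i) record that $\Ran J$ is closed because $J \in \cB_\linv$ is left invertible with closed range (Thm. \ref{linv}), and $\Ker I$ is closed because $I$ is bounded, justifying the parenthetical remark; (ii) note $IJ \in \cB(\cK',\cK)$; (iii) invoke Prop. \ref{pwo} to get, as linear maps, $IJ$ bijective iff $\Ran J \oplus \Ker I = \cH$; (iv) use the bounded inverse theorem to conclude that for a bounded bijection between Banach spaces, bijectivity is equivalent to invertibility in $\cC_\inv(\cK',\cK)$, which is precisely the asserted statement.

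The main obstacle — and it is a mild one — is simply ensuring that ``invertible'' in the Banach-space sense (Def. \ref{def-inv}, demanding a \emph{bounded} inverse and full range) is correctly matched against the purely set-theoretic ``bijective'' appearing in Prop. \ref{pwo}. This gap is bridged entirely by the open mapping theorem, so there is no genuine difficulty; the proof is essentially a one-line reduction to Prop. \ref{pwo} together with a citation of the bounded inverse theorem. I would expect the author's proof to be correspondingly short, perhaps even just remarking that it follows from Prop. \ref{pwo} and the closed range theorem / open mapping theorem.
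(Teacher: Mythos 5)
Your proof is correct and follows essentially the same route as the paper: the paper's own proof is precisely the one-line reduction to Prop.~\ref{pwo} plus the observation that a bijective bounded operator between Banach spaces is invertible (the bounded inverse theorem). Your additional remarks justifying the closedness of $\Ran J$ and $\Ker I$ are consistent with, and slightly more explicit than, the paper's parenthetical remark.
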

\proof Given Prop.
\ref{pwo}, it suffices to note that $IJ$ is a bijective bounded operator on a Banach space, hence it is invertible. \qeds

Here is an adaptation of the  first part of Prop. \ref{pq} to the
Banach space setting:

\begin{proposition} Let $\cX,\cX',\cY$ be closed subspaces of $\cH$, with $\cX$ complementary to $\cY$. Then  $\tilde P_{\cX,\cY}J_{\cX'} $ is invertible iff
$\cX'$ and $\cY$ are complementary.
\label{pro31}\end{proposition}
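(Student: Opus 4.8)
The plan is to deduce this directly from Prop. \ref{pwo1}, in exactly the same way that the linear-space statement Prop. \ref{pro3} was deduced from Prop. \ref{pwo}. The whole content is to recognize that the two factors of $\tilde P_{\cX,\cY}J_{\cX'}$ satisfy the hypotheses of Prop. \ref{pwo1}, and then read off the conclusion. First I would check the factor $J_{\cX'}$: since $\cX'$ is a closed subspace of $\cH$, the embedding $J_{\cX'}\in\cB(\cX',\cH)$ is bounded, injective, and has closed range $\Ran J_{\cX'}=\cX'$. By the closed range theorem (Thm. \ref{linv}) together with Def. \ref{def-inv}, this says precisely that $J_{\cX'}\in\cB_\linv(\cX',\cH)$, so it can play the role of the operator $J$ in Prop. \ref{pwo1}.

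Next I would verify that $\tilde P_{\cX,\cY}$ plays the role of the surjection $I$. Because $\cX$ is complementary to $\cY$ and both are closed (so that $\cX+\cY=\cH$ is automatically closed), the projection $P_{\cX,\cY}$ is bounded, as recorded in the discussion preceding Prop. \ref{pwo1}. Hence its corestriction $\tilde P_{\cX,\cY}\in\cB(\cH,\cX)$ is bounded and surjective, with $\Ker\tilde P_{\cX,\cY}=\cY$.

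Having identified $J=J_{\cX'}$ and $I=\tilde P_{\cX,\cY}$, I would apply Prop. \ref{pwo1} essentially verbatim: $\tilde P_{\cX,\cY}J_{\cX'}$ is invertible iff $\Ran J_{\cX'}\oplus\Ker\tilde P_{\cX,\cY}=\cH$, that is, iff $\cX'\oplus\cY=\cH$, which is exactly the statement that $\cX'$ and $\cY$ are complementary. This gives both implications at once.

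The hard part, to the extent there is one, is entirely contained in the second step: the passage from the linear-space Prop. \ref{pro3} to this Banach-space version turns on boundedness, and the boundedness of $\tilde P_{\cX,\cY}$ (and the left invertibility of $J_{\cX'}$) is precisely where the closedness hypotheses on $\cX$, $\cY$, $\cX'$ are consumed. I expect no genuine obstacle beyond this bookkeeping, since once boundedness and surjectivity are established the result is an immediate specialization of Prop. \ref{pwo1}; in particular there is no need to reprove bijectivity from scratch, as that is already handled at the linear level by Prop. \ref{pwo}.
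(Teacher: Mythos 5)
Your proposal is correct and matches the paper's intended argument: the paper states Prop.~\ref{pro31} as the Banach-space adaptation of Prop.~\ref{pro3}, to be obtained by specializing Prop.~\ref{pwo1} with $J=J_{\cX'}$ and $I=\tilde P_{\cX,\cY}$, exactly as you do. Your verification that $J_{\cX'}\in\cB_\linv(\cX',\cH)$ and that $\tilde P_{\cX,\cY}\in\cB(\cH,\cX)$ is surjective with kernel $\cY$ (using boundedness of $P_{\cX,\cY}$ from the closedness of $\cX$, $\cY$, and $\cX+\cY=\cH$) is precisely the bookkeeping the paper leaves implicit.
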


\subsection{Gap topology}

\bed If $\cX\in\Grass(\cH)$, we will introduce the following notation
for the  {\em ball} in $\cX$:
\[ B_\cX:=\{x\in\cX\ :\ \|x\|\leq1\}.\]
 As usual, the {\em distance} of a {non-empty} set $K\subset\cH$ and $x\in\cH$ is defined as
\[\dist(x,K):=\inf\{\|x-y\|\ :\ y\in K\}.\]
\eed

\begin{definition}
For $\cX,\cY\in\Grass(\cH)$ we define
\[\delta(\cX,\cY):=\sup\limits_{x\in B_\cX}\dist(x,\cY).\]
The {\em gap} between  $\cX$ and $\cY$ is defined as
\begin{equation}
\hat\delta(\cX,\cY):=\max\big(\delta(\cX,\cY),
\delta(\cY,\cX)\big).\label{gappo}\end{equation}
The {\em gap topology}  is the weakest topology on $\Grass(\cH)$ for which the function $\delta$ is continuous on
$\Grass(\cH)\times \Grass(\cH)$. 
\label{def-gap}
\end{definition}

Note that the gap defined in (\ref{gappo}) is not a metric. There exists a metric that can be used to define the gap topology, but we will not need it. We refer the reader to \cite{K} for more discussion about the gap topology.

\begin{proposition}\label{pri0}
If $\cX,\cY$ and $\cX',\cY'$ are two pairs of complementary subspaces in $\Grass(\cH)$, then
\[\max\big(\hat\delta(\cX,\cX'),\hat\delta(\cY,\cY')\big)\leq\|P_{\cX,\cY}-
P_{\cX',\cY'}\|.\]
\end{proposition}
\proof
For $x\in B_\cX$, we have
\begin{eqnarray*}
\dist(x,\cX')&\leq&\|x-P_{\cX',\cY'}x\|\\
&=&\|(P_{\cX,\cY}-P_{\cX',\cY'})x\|\ \leq\ \|P_{\cX,\cY}-P_{\cX',\cY'}\|.
\end{eqnarray*}This shows 
\[\delta(\cX,\cX')\leq\|P_{\cX,\cY}-
P_{\cX',\cY'}\|.\]
The same argument gives
\[\delta(\cX',\cX)\leq\|P_{\cX,\cY}-
P_{\cX',\cY'}\|.\]
Finally, we use 
\[P_{\cX,\cY}-P_{\cX',\cY'}=P_{\cY',\cX'}-P_{\cY,\cX}.\qed \]

\begin{corollary}\label{pri0.0}
 If $\cX,\cY$ and $\cX',\cY$ are complementary, then
\[\hat\delta(\cX,\cX')\leq\|P_{\cX,\cY}-
P_{\cX',\cY}\|.\]
\end{corollary}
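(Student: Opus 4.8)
The plan is to derive Corollary \ref{pri0.0} directly from Proposition \ref{pri0} by specializing the latter to the case where the second subspace of each complementary pair coincides. In Proposition \ref{pri0} we are given two pairs of complementary subspaces $(\cX,\cY)$ and $(\cX',\cY')$, and the conclusion is
\[
\max\big(\hat\delta(\cX,\cX'),\hat\delta(\cY,\cY')\big)\leq\|P_{\cX,\cY}-P_{\cX',\cY'}\|.
\]
The corollary concerns the situation where $\cX,\cY$ are complementary and $\cX',\cY$ are complementary --- that is, both pairs share the \emph{same} second subspace $\cY$.

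First I would observe that the hypothesis of the corollary is exactly the hypothesis of Proposition \ref{pri0} with the substitution $\cY'=\cY$. Indeed, under this substitution $(\cX,\cY)$ and $(\cX',\cY')=(\cX',\cY)$ are two pairs of complementary subspaces in $\Grass(\cH)$, so the proposition applies verbatim. Its conclusion then reads
\[
\max\big(\hat\delta(\cX,\cX'),\hat\delta(\cY,\cY)\big)\leq\|P_{\cX,\cY}-P_{\cX',\cY}\|.
\]
The next step is simply to note that $\hat\delta(\cY,\cY)=0$: by Definition \ref{def-gap}, $\delta(\cY,\cY)=\sup_{x\in B_\cY}\dist(x,\cY)=0$ since every $x\in\cY$ satisfies $\dist(x,\cY)=0$, and hence $\hat\delta(\cY,\cY)=\max(0,0)=0$. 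Therefore the left-hand maximum reduces to $\hat\delta(\cX,\cX')$, yielding exactly the claimed inequality.

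There is essentially no obstacle here: the corollary is a one-line specialization, and the only thing to verify is that setting $\cY'=\cY$ is legitimate and that the self-gap vanishes. For completeness I would keep the proof to two sentences, citing Proposition \ref{pri0} and discarding the now-trivial second term of the maximum.

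\proof Apply Proposition \ref{pri0} with $\cY'=\cY$: the pairs $(\cX,\cY)$ and $(\cX',\cY)$ are both complementary, so
\[
\max\big(\hat\delta(\cX,\cX'),\hat\delta(\cY,\cY)\big)\leq\|P_{\cX,\cY}-P_{\cX',\cY}\|.
\]
Since $\hat\delta(\cY,\cY)=0$, the left-hand side equals $\hat\delta(\cX,\cX')$, which gives the claim. \qed
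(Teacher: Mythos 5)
Your proof is correct and is exactly how the paper treats this statement: the corollary is stated without proof as an immediate specialization of Proposition \ref{pri0} to the case $\cY'=\cY$, which is precisely your argument. (In fact one can skip even the observation that $\hat\delta(\cY,\cY)=0$, since $\hat\delta(\cX,\cX')$ is trivially bounded by the maximum on the left-hand side.)
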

\begin{lemma}
\[\|P_{\cY,\cX}P_{\cX',\cY'}\|\leq \|P_{\cY,\cX}\|\|P_{\cX',\cY'}\|\delta(\cX',\cX).\]
\end{lemma}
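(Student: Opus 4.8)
The plan is to prove the norm bound
\[
\|P_{\cY,\cX}P_{\cX',\cY'}\|\leq \|P_{\cY,\cX}\|\,\|P_{\cX',\cY'}\|\,\delta(\cX',\cX)
\]
by exploiting the key algebraic fact that $P_{\cY,\cX}$ annihilates $\cX$, so it only ``sees'' the distance of a vector from $\cX$. First I would take an arbitrary $v\in\cH$ and set $w:=P_{\cX',\cY'}v$, noting $w\in\cX'$ (the range of $P_{\cX',\cY'}$). Since $P_{\cY,\cX}$ has kernel $\cX$, for any $u\in\cX$ we have $P_{\cY,\cX}w=P_{\cY,\cX}(w-u)$, and hence
\[
\|P_{\cY,\cX}w\|=\|P_{\cY,\cX}(w-u)\|\leq\|P_{\cY,\cX}\|\,\|w-u\|.
\]
Taking the infimum over $u\in\cX$ gives $\|P_{\cY,\cX}w\|\leq\|P_{\cY,\cX}\|\,\dist(w,\cX)$.

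Next I would convert the distance of $w$ from $\cX$ into the quantity $\delta(\cX',\cX)$. Since $w\in\cX'$, writing $w=\|w\|\,\hat w$ with $\hat w\in B_{\cX'}$ (assuming $w\neq0$; the case $w=0$ is trivial), homogeneity of the distance gives $\dist(w,\cX)=\|w\|\,\dist(\hat w,\cX)\leq\|w\|\,\delta(\cX',\cX)$, by the very definition of $\delta(\cX',\cX)=\sup_{x\in B_{\cX'}}\dist(x,\cX)$. Combining this with the previous inequality yields
\[
\|P_{\cY,\cX}w\|\leq\|P_{\cY,\cX}\|\,\|w\|\,\delta(\cX',\cX).
\]

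Finally I would bound $\|w\|=\|P_{\cX',\cY'}v\|\leq\|P_{\cX',\cY'}\|\,\|v\|$, so that
\[
\|P_{\cY,\cX}P_{\cX',\cY'}v\|\leq\|P_{\cY,\cX}\|\,\|P_{\cX',\cY'}\|\,\delta(\cX',\cX)\,\|v\|,
\]
and take the supremum over $v$ with $\|v\|\leq1$ to obtain the claimed operator-norm estimate. I do not anticipate a serious obstacle here: the argument is essentially a two-line chain of inequalities, and the only point requiring a little care is the passage from $\dist(w,\cX)$ to $\delta(\cX',\cX)$, which relies on homogeneity of the distance function and on noting that $w$ lands in $\cX'$ — precisely the feature that makes $\delta(\cX',\cX)$ (rather than $\delta(\cX,\cX')$) the relevant asymmetric gap quantity. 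The ordering $\cX',\cX$ in $\delta$ matters and must be tracked carefully, since the gap $\delta$ is not symmetric.
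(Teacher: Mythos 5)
Your proof is correct and is essentially the same as the paper's: both rest on the observation that $\cX=\Ker P_{\cY,\cX}$, so $P_{\cY,\cX}$ applied to a vector of $\cX'$ equals $P_{\cY,\cX}$ applied to its difference with any element of $\cX$, which after taking the infimum over $\cX$, normalizing, and factoring out $\|P_{\cX',\cY'}\|$ yields exactly the factor $\delta(\cX',\cX)$. The only difference is cosmetic (you fix $v$ and normalize $w=P_{\cX',\cY'}v$, while the paper manipulates suprema directly), so there is nothing to add.
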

\proof
\begin{eqnarray*}
\|P_{\cY,\cX}P_{\cX',\cY'}\|&=&\sup_{v\in B_\cH}\|P_{\cY,\cX}P_{\cX',\cY'}v\|\\
&\leq&\|P_{\cX',\cY'}\|\sup_{x'\in B_{\cX'}}\|P_{\cY,\cX}x'\|\\
&=&\|P_{\cX',\cY'}\|\sup_{x'\in B_{\cX'}}\inf_{x\in\cX}\|P_{\cY,\cX}(x'-x)\|\\
&\leq&\|P_{\cY,\cX}\|\|P_{\cX',\cY'}\|\sup_{x'\in B_{\cX'}}\inf_{x\in\cX}\|x'-x\|.\qed
\end{eqnarray*}

The following proposition is essentially taken from \cite{GL}.

\begin{proposition}\label{pri1}
Let $\cX,\cY\in\Grass(\cH)$ be complementary, $\cX',\cY'\in\Grass(\cH)$ and {\begin{eqnarray}
\|P_{\cY,\cX}\|\delta(\cX',\cX)+
\|P_{\cX,\cY}\|\delta(\cY',\cY)&<&1,\label{contri}\\
\|P_{\cY,\cX}\|\delta(\cY,\cY')+
\|P_{\cX,\cY}\|\delta(\cX,\cX')&<&1.\label{contri0}
\end{eqnarray}}
Then $\cX',\cY'$ are complementary and
\begin{eqnarray}
\|P_{\cX,\cY}-P_{\cX',\cY'}\|&\leq&\frac{\|P_{\cX,\cY}\|\|P_{\cY,\cX}\|\big(\delta(\cX',\cX)+
\delta(\cY',\cY)\big)}
{1-\|P_{\cX,\cY}\|\delta(\cX',\cX)-\|P_{\cY,\cX}\|\delta(\cY',\cY)}.
\label{contri1}
\end{eqnarray}
\end{proposition}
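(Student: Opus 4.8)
The plan is to argue in two stages: first that $\cX'$ and $\cY'$ are complementary (so that $P_{\cX',\cY'}$ is even defined), and then to establish the quantitative estimate \eqref{contri1}. Throughout write $P:=P_{\cX,\cY}$ and $Q:=\one-P=P_{\cY,\cX}$. The whole argument rests on two elementary consequences of the gap definition that do \emph{not} presuppose the existence of $P_{\cX',\cY'}$: since $\Ker Q=\cX$, every $x'\in\cX'$ satisfies $Qx'=Q(x'-x)$ for all $x\in\cX$, whence $\|Q|_{\cX'}\|\le\|Q\|\,\delta(\cX',\cX)$; symmetrically $\|P|_{\cY'}\|\le\|P\|\,\delta(\cY',\cY)$. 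The ``reverse'' gaps enter dually: for $\xi\in\cX$ one may pick $x'\in\cX'$ with $\|\xi-x'\|$ arbitrarily close to $\dist(\xi,\cX')\le\delta(\cX,\cX')\|\xi\|$, and likewise for $\cY$.

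For the first stage I would show that $P$ restricts to an isomorphism of $\cX'$ onto $\cX$, and $Q$ to an isomorphism of $\cY'$ onto $\cY$. Bounded-belowness is immediate from the first pair of estimates: for $x'\in\cX'$,
\[
\|Px'\|\ \ge\ \|x'\|-\|Qx'\|\ \ge\ \bigl(1-\|Q\|\delta(\cX',\cX)\bigr)\|x'\|,
\]
and the bracket is positive by \eqref{contri}; likewise for $Q|_{\cY'}$ using $\|P\|\delta(\cY',\cY)<1$. Surjectivity of $P|_{\cX'}$ onto $\cX$ is where the second hypothesis \eqref{contri0} is used: from $\|\xi-Px'\|=\|P(\xi-x')\|\le\|P\|\delta(\cX,\cX')\|\xi\|$ with $\|P\|\delta(\cX,\cX')<1$ one solves $Px'=\xi$ by the usual geometric iteration — correct $\xi$ by some $x'\in\cX'$, replace $\xi$ by the residual $\xi-Px'$, and sum the series, which converges because the residual contracts while $P|_{\cX'}$ is bounded below. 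Granting these isomorphisms, $\cX'$ and $\cY'$ are graphs over $\cX$ and $\cY$: in the block coordinates of $\cH=\cX\oplus\cY$ one has $\cX'=\{(\xi,K\xi):\xi\in\cX\}$ and $\cY'=\{(L\eta,\eta):\eta\in\cY\}$ with $K=Q(P|_{\cX'})^{-1}$, $L=P(Q|_{\cY'})^{-1}$, and the above bounds give $\|K\|\le\frac{\|Q\|\delta(\cX',\cX)}{1-\|Q\|\delta(\cX',\cX)}$ and $\|L\|\le\frac{\|P\|\delta(\cY',\cY)}{1-\|P\|\delta(\cY',\cY)}$. Now $\cX'\oplus\cY'=\cH$ is equivalent to invertibility of $\mat{\one}{L}{K}{\one}$, hence of $\one-KL$, which follows once $\|K\|\,\|L\|<1$; a one-line computation shows this last inequality is guaranteed precisely by \eqref{contri}.

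For the second stage $P':=P_{\cX',\cY'}$ is now available, together with $Q':=\one-P'=P_{\cY',\cX'}$, and I would use the identity $P-P'=PQ'-QP'$. The Lemma preceding the proposition, used once as stated and once with the pairs $(\cX,\cX')$ and $(\cY,\cY')$ interchanged, yields $\|QP'\|\le\|Q\|\,\|P'\|\,\delta(\cX',\cX)$ and $\|PQ'\|\le\|P\|\,\|Q'\|\,\delta(\cY',\cY)$, so with $d:=\|P-P'\|$,
\[
d\ \le\ \|Q\|\,\|P'\|\,\delta(\cX',\cX)+\|P\|\,\|Q'\|\,\delta(\cY',\cY).
\]
Inserting the crude self-bounds $\|P'\|\le\|P\|+d$ and $\|Q'\|\le\|Q\|+d$ (the latter because $\|Q-Q'\|=\|P-P'\|$) makes the coefficient of $d$ equal to $\|Q\|\delta(\cX',\cX)+\|P\|\delta(\cY',\cY)$, which is exactly the left-hand side of \eqref{contri} and hence $<1$; solving the resulting affine inequality for $d$ then delivers \eqref{contri1}, the coefficient of $d$ coinciding with the left-hand side of \eqref{contri}.

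I expect the main obstacle to be the surjectivity step in the first stage — establishing that the restricted projections hit \emph{all} of $\cX$ and $\cY$ rather than merely closed subspaces of them — since this is what upgrades ``$\cX'$ is a graph over $\Ran(P|_{\cX'})$'' to a genuine splitting of $\cH$, and it is precisely here that the second family of hypotheses \eqref{contri0}, involving the reverse gaps $\delta(\cX,\cX')$ and $\delta(\cY,\cY')$, becomes indispensable. By contrast, the bookkeeping of constants in the second stage is routine once the identity $P-P'=PQ'-QP'$ and the Lemma are in hand.
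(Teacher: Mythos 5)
Your proof is correct. Its second stage is essentially the paper's own argument: the paper also writes $P_{\cX,\cY}-P_{\cX',\cY'}=P_{\cX,\cY}P_{\cY',\cX'}-P_{\cY,\cX}P_{\cX',\cY'}$, applies the preceding Lemma twice, inserts the self-bounds $\|P_{\cX',\cY'}\|\le\|P_{\cX,\cY}\|+d$, $\|P_{\cY',\cX'}\|\le\|P_{\cY,\cX}\|+d$, and solves the affine inequality for $d$. Where you genuinely diverge is the complementarity stage. The paper handles it by two separate contradiction arguments: $\cX'\cap\cY'=\{0\}$ follows from \eqref{contri} by decomposing a unit vector of $\cX'\cap\cY'$ as $P_{\cX,\cY}v+P_{\cY,\cX}v$; then the estimate of Step 2 is carried out \emph{before} complementarity is known (with $P_{\cX,\cY}J_{\cX'+\cY'}$ in place of $P_{\cX,\cY}$), which simultaneously proves that $\cX'+\cY'$ is closed; finally $\cX'+\cY'=\cH$ follows from \eqref{contri0} by a Riesz-lemma-type argument. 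You instead prove that $P_{\cX,\cY}|_{\cX'}:\cX'\to\cX$ and $P_{\cY,\cX}|_{\cY'}:\cY'\to\cY$ are isomorphisms (bounded below from \eqref{contri}, surjective from \eqref{contri0} via the geometric iteration), realize $\cX',\cY'$ as graphs of $K,L$, and invert $\mat{\one}{L}{K}{\one}$ via the Schur complement and $\|K\|\,\|L\|<1\Leftrightarrow$ \eqref{contri}. Both routes are sound; yours is more constructive (explicit graph operators with norm bounds, hence in principle an explicit formula for $P_{\cX',\cY'}$, and the stronger information that the restricted projections are onto), while the paper's ordering of steps is what yields its subsequent unnumbered proposition: if \eqref{contri0} is dropped, one still gets $\cX'\cap\cY'=\{0\}$, closedness of $\cX'+\cY'$, and the estimate for $P_{\cX,\cY}J_{\cX'+\cY'}$ --- a refinement your argument cannot deliver, since you need \eqref{contri0} before $P_{\cX',\cY'}$ even exists.

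One further point, in your favour: your denominator $1-\|P_{\cY,\cX}\|\delta(\cX',\cX)-\|P_{\cX,\cY}\|\delta(\cY',\cY)$, whose positivity is exactly \eqref{contri}, is also what the paper's own computation in Step 2 produces; the denominator displayed in \eqref{contri1} has the two projection norms interchanged, and its positivity does not follow from the stated hypotheses. This appears to be a typo in the paper's statement, not a defect of your argument.
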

\proof
Step 1. Let us show that $\cX'\cap\cY'=\{0\}$. Suppose it is not true. Then there exists $v\in \cX'\cap\cY'$, $\|v\|=1$. 
\begin{eqnarray*}
1=\|v\|&\leq&\|P_{\cX,\cY}v\|+\|P_{\cY,\cX}v\|\\
&\leq&\|P_{\cX,\cY}\|\delta(\cY',\cY)+\|P_{\cY,\cX}\|\delta(\cX',\cX),
\end{eqnarray*}
which is a contradiction.

Step 2.  By Step 1, $P_{\cX',\cY'}$ is well defined as a map on $\cX'+\cY'$. We will show that it is bounded, or equivalently that $\cX'+\cY'$ is closed. We will also obtain the estimate  (\ref{contri1}).

{
For simplicity, in the following estimates we assume that $\cH=\cX'+\cY'$.   If $\cX'+\cY'$ is strictly smaller than $\cH$, then we should replace
$P_{\cX,\cY}$ by $P_{\cX,\cY}J_{\cX'+\cY'}$.

Clearly,
\begin{eqnarray*}
P_{\cX,\cY}-P_{\cX',\cY'}&=&
P_{\cX,\cY}(P_{\cY',\cX'}+P_{\cX',\cY'})-(P_{\cX,\cY}+P_{\cY,\cX})P_{\cX',\cY'}\\
&=&
P_{\cX,\cY}P_{\cY',\cX'}-P_{\cY,\cX}P_{\cX',\cY'}.\end{eqnarray*}}
Hence,
\begin{eqnarray*}
\|P_{\cX,\cY}-P_{\cX',\cY'}\|&\leq&
\|P_{\cX,\cY}P_{\cY',\cX'}\|+\|P_{\cY,\cX}P_{\cX',\cY'}\|\\
&\leq&
\|P_{\cX,\cY}\|\|P_{\cY',\cX'}\|\delta(\cY',\cY)
+\|P_{\cY,\cX}\|\|P_{\cX',\cY'}\|\delta(\cX',\cX)\\
&\leq&
\|P_{\cX,\cY}\|\big(\|P_{\cY,\cX}\|+\|P_{\cX'\cY'}-P_{\cX,\cY}\|\big)\delta(\cY',\cY)\\
&&+
\|P_{\cY,\cX}\|\big(\|P_{\cX,\cY}\|+\|P_{\cX'\cY'}-P_{\cX,\cY}\|\big)\delta(\cX',\cX).
\end{eqnarray*}
Thus
\begin{eqnarray*}
&&\big(1-\|P_{\cX,\cY}\|\delta(\cX',\cX)-\|P_{\cY,\cX}\|\delta(\cY',\cY)\big)
\|P_{\cX,\cY}-P_{\cX',\cY}\|\\
&\leq&\|P_{\cX,\cY}\|\|P_{\cY,\cX}\|\big(\delta(\cX',\cX)+
\delta(\cY',\cY)\big).
\end{eqnarray*}

Step 3. We show that {$\cX'+\cY'=\cH$}. Suppose that this is not the case. We can then find  $v\in\cH$ such that $\|v\|=1$, {$\dist(v,\cX'+\cY')=1$.  }
Now 
\begin{eqnarray*}
1=\dist(v,\cX'+\cY')&\leq&\dist(P_{\cX,\cY}v,\cX'+\cY')+\dist(P_{\cY,\cX}v,\cX'+\cY')
\\
&\leq&\dist(P_{\cX,\cY}v,\cX')+\dist(P_{\cY,\cX}v,\cY')
\\&\leq &\|P_{\cX,\cY}\|\delta(\cX,\cX')+\|P_{\cY,\cX}\|\delta(\cY,\cY')
,\end{eqnarray*}
which is a contradiction.
\qed

\begin{corollary}
Let $\cX,\cY\in\Grass(\cH)$ be complementary and
{ \begin{equation}
\|P_{\cY,\cX}\|\delta(\cX',\cX)<1,\ \ \|P_{\cX,\cY}\|\delta(\cX,\cX')<1
.\label{contri.0}\end{equation}}
Then $\cX',\cY$ are complementary and
\begin{eqnarray}
\|P_{\cX,\cY}-P_{\cX',\cY}\|&\leq&\frac{\|P_{\cX,\cY}\|\|P_{\cY,\cX}\|\delta(\cX',\cX)}
{1-\|P_{\cX,\cY}\|\delta(\cX',\cX)}.\label{contri1.0}
\end{eqnarray}
\label{pri1.0}\end{corollary}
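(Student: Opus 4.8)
**

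The plan is to derive Corollary~\ref{pri1.0} as the special case of Proposition~\ref{pri1} obtained by setting $\cY'=\cY$. The whole point is that when the second subspace is held fixed, the two-sided hypotheses (\ref{contri})--(\ref{contri0}) collapse, because $\delta(\cY',\cY)=\delta(\cY,\cY)=0$ and likewise $\delta(\cY,\cY')=0$. First I would substitute $\cY'=\cY$ everywhere in the statement of Proposition~\ref{pri1} and check that the hypotheses degenerate exactly to (\ref{contri.0}): in (\ref{contri}) the term $\|P_{\cX,\cY}\|\delta(\cY',\cY)$ vanishes, leaving $\|P_{\cY,\cX}\|\delta(\cX',\cX)<1$; in (\ref{contri0}) the term $\|P_{\cY,\cX}\|\delta(\cY,\cY')$ vanishes, leaving $\|P_{\cX,\cY}\|\delta(\cX,\cX')<1$. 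These are precisely the two conditions in (\ref{contri.0}).

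Next I would invoke Proposition~\ref{pri1} directly: its conclusion is that $\cX',\cY'$ are complementary and the bound (\ref{contri1}) holds. With $\cY'=\cY$ this says that $\cX',\cY$ are complementary, which is the first assertion of the corollary. For the quantitative estimate, I would simplify the right-hand side of (\ref{contri1}) by dropping the two terms containing $\delta(\cY',\cY)=0$: the numerator $\|P_{\cX,\cY}\|\|P_{\cY,\cX}\|\bigl(\delta(\cX',\cX)+\delta(\cY',\cY)\bigr)$ becomes $\|P_{\cX,\cY}\|\|P_{\cY,\cX}\|\delta(\cX',\cX)$, and the denominator $1-\|P_{\cX,\cY}\|\delta(\cX',\cX)-\|P_{\cY,\cX}\|\delta(\cY',\cY)$ becomes $1-\|P_{\cX,\cY}\|\delta(\cX',\cX)$. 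This yields exactly (\ref{contri1.0}).

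There is essentially no genuine obstacle here, since the corollary is a transparent specialization; the only thing worth a moment's care is a bookkeeping point. Proposition~\ref{pri1} as displayed states the bound for $\|P_{\cX,\cY}-P_{\cX',\cY'}\|$ in (\ref{contri1}) while the intermediate computation in Step~2 of its proof actually controls $\|P_{\cX,\cY}-P_{\cX',\cY}\|$; with $\cY'=\cY$ these coincide, so no reconciliation is needed and the estimate transfers without loss. I would therefore write the proof as a single short paragraph: set $\cY'=\cY$, observe that (\ref{contri})--(\ref{contri0}) reduce to (\ref{contri.0}) because $\delta(\cY,\cY)=0$, and read off both the complementarity of $\cX',\cY$ and the estimate (\ref{contri1.0}) from (\ref{contri1}).
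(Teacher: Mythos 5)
Your proof is correct and matches the paper's intent exactly: the paper states this corollary without a separate argument, precisely because it is the specialization $\cY'=\cY$ of Proposition~\ref{pri1}, with $\delta(\cY,\cY)=0$ collapsing the hypotheses (\ref{contri})--(\ref{contri0}) to (\ref{contri.0}) and the bound (\ref{contri1}) to (\ref{contri1.0}). Nothing further is needed.
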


Note in passing that the proof of  Prop. \ref{pri1} shows also a somewhat more general statement (which we however will not use in the sequel): 
\begin{proposition}
{ If in Prop. \ref{pri1} we drop the condition (\ref{contri0}),
then $\cX'\cap\cY'=\{0\}$, $\cX'+\cY'$ is closed and the estimate 
(\ref{contri1}) is still true if we replace
$P_{\cX,\cY}$ with $P_{\cX,\cY}J_{\cX'+\cY'}$.}
\end{proposition}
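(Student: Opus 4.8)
The plan is to re-examine the proof of Prop.~\ref{pri1} and to observe that the hypothesis (\ref{contri0}) is used there in exactly one place, namely Step~3, which is the only step establishing $\cX'+\cY'=\cH$. Once this dependence is isolated, the claim follows by discarding Step~3 and keeping Steps~1 and~2 essentially verbatim.

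First I would check that Step~1 relies only on (\ref{contri}). For a hypothetical unit vector $v\in\cX'\cap\cY'$ one writes $v=P_{\cX,\cY}v+P_{\cY,\cX}v$; since $v\in\cY'$ one has $\|P_{\cX,\cY}v\|\leq\|P_{\cX,\cY}\|\delta(\cY',\cY)$, and since $v\in\cX'$ one has $\|P_{\cY,\cX}v\|\leq\|P_{\cY,\cX}\|\delta(\cX',\cX)$, so that $1=\|v\|\leq\|P_{\cX,\cY}\|\delta(\cY',\cY)+\|P_{\cY,\cX}\|\delta(\cX',\cX)$, contradicting (\ref{contri}). Hence $\cX'\cap\cY'=\{0\}$, with no appeal to (\ref{contri0}).

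Next I would reproduce Step~2 to prove that $\cX'+\cY'$ is closed and to obtain (\ref{contri1}). Since $\cX'\cap\cY'=\{0\}$, the idempotent $P_{\cX',\cY'}$ is well defined on $\cX'+\cY'$, and closedness of $\cX'+\cY'$ is equivalent to its boundedness. Working on the (a priori merely dense-in-its-closure) space $\cX'+\cY'$, I would use the identity $P_{\cX,\cY}-P_{\cX',\cY'}=P_{\cX,\cY}P_{\cY',\cX'}-P_{\cY,\cX}P_{\cX',\cY'}$, together with the preceding Lemma (which gives $\|P_{\cY,\cX}P_{\cX',\cY'}\|\leq\|P_{\cY,\cX}\|\|P_{\cX',\cY'}\|\delta(\cX',\cX)$ and, symmetrically, $\|P_{\cX,\cY}P_{\cY',\cX'}\|\leq\|P_{\cX,\cY}\|\|P_{\cY',\cX'}\|\delta(\cY',\cY)$) and the triangle bounds $\|P_{\cX',\cY'}\|\leq\|P_{\cX,\cY}\|+\|P_{\cX,\cY}-P_{\cX',\cY'}\|$ and $\|P_{\cY',\cX'}\|\leq\|P_{\cY,\cX}\|+\|P_{\cX,\cY}-P_{\cX',\cY'}\|$ (the latter via $P_{\cX,\cY}-P_{\cX',\cY'}=P_{\cY',\cX'}-P_{\cY,\cX}$). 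This yields an inequality for $\|P_{\cX,\cY}-P_{\cX',\cY'}\|$ in terms of itself whose rearrangement is precisely (\ref{contri1}), the denominator $1-\|P_{\cX,\cY}\|\delta(\cX',\cX)-\|P_{\cY,\cX}\|\delta(\cY',\cY)$ being positive exactly by (\ref{contri}). As the ambient space is now $\cX'+\cY'$ rather than $\cH$, every occurrence of $P_{\cX,\cY}$ is read as $P_{\cX,\cY}J_{\cX'+\cY'}$, which is the stated replacement.

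The main (in fact the only) point requiring care is this bookkeeping of the restriction to $\cX'+\cY'$: I must verify that the algebraic identity and the norm estimates remain valid once all projections are regarded as operators on $\cX'+\cY'$ (so that $\one$ there means $\one_{\cX'+\cY'}=(P_{\cX,\cY}+P_{\cY,\cX})J_{\cX'+\cY'}$) and $P_{\cX,\cY}$ is composed with $J_{\cX'+\cY'}$. Granting this, dropping (\ref{contri0}) costs only Step~3, i.e. the equality $\cX'+\cY'=\cH$, while leaving intact the conclusions $\cX'\cap\cY'=\{0\}$, closedness of $\cX'+\cY'$, and the estimate (\ref{contri1}) with $P_{\cX,\cY}$ replaced by $P_{\cX,\cY}J_{\cX'+\cY'}$. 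This is exactly the assertion.
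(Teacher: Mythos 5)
Your proposal is correct and is essentially the paper's own argument: the paper proves this proposition simply by remarking that Steps 1 and 2 of the proof of Prop.~\ref{pri1} never use (\ref{contri0}) (it enters only in Step 3, which establishes $\cX'+\cY'=\cH$), and that Step 2 is already written so that when $\cX'+\cY'\neq\cH$ one replaces $P_{\cX,\cY}$ by $P_{\cX,\cY}J_{\cX'+\cY'}$. Your isolation of the dependence on (\ref{contri0}) and your bookkeeping of the restriction to $\cX'+\cY'$ match that intended argument exactly.
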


\begin{proposition}\label{pro1}
Let  $T\in\cB_\linv(\cK,\cH)$, $S\in\cB(\cK,\cH)$. Then
\begin{equation}
\delta(\Ran T,\Ran S)\leq \|(T- S)\tilde T^{-1}\|.\label{psh}\end{equation}
Hence, if also  $S\in\cB_\linv(\cK,\cH)$,
\[\hat\delta(\Ran T,\Ran S)\leq \max\big(
\|(T- S)\tilde T^{-1}\|, \|(T- S)\tilde S^{-1}\|\big)
.\]
\end{proposition}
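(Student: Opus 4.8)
The plan is to prove the gap estimate by directly unwinding the definitions of $\delta$ and $\dist$, and exploiting the quantitative lower bound on left-invertible operators supplied by the closed range theorem (Thm.~\ref{linv}). The key observation is that $\Ran T$ consists exactly of the vectors $Ty$ with $y\in\cK$, and for such a vector the natural candidate for an approximating point in $\Ran S$ is $Sy$. Thus the first step is to estimate, for an arbitrary $y\in\cK$,
\[
\dist(Ty,\Ran S)\ \leq\ \|Ty-Sy\|\ =\ \|(T-S)y\|.
\]

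Next I would parametrize the unit ball $B_{\Ran T}$. Every $x\in B_{\Ran T}$ is of the form $x=Ty$ for a unique $y\in\cK$ (uniqueness because $T$ is injective), namely $y=\tilde T^{-1}x$, and $\|x\|\leq 1$. Substituting $y=\tilde T^{-1}x$ into the bound above gives
\[
\dist(x,\Ran S)\ \leq\ \|(T-S)\tilde T^{-1}x\|\ \leq\ \|(T-S)\tilde T^{-1}\|\,\|x\|\ \leq\ \|(T-S)\tilde T^{-1}\|,
\]
where I use that $\tilde T^{-1}$ is everywhere defined on $\Ran T$ and bounded (again by Thm.~\ref{linv}, since $T\in\cB_\linv(\cK,\cH)$ means $\Ran T$ is closed and $\tilde T^{-1}$ bounded). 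Taking the supremum over $x\in B_{\Ran T}$ yields exactly \eqref{psh}.

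For the second, symmetric inequality, I would note that when $S$ is also left-invertible the same argument with the roles of $T$ and $S$ interchanged gives $\delta(\Ran S,\Ran T)\leq\|(T-S)\tilde S^{-1}\|$ (the sign of $T-S$ is irrelevant under the norm). Then by the definition \eqref{gappo} of the gap as the maximum of the two one-sided quantities,
\[
\hat\delta(\Ran T,\Ran S)=\max\big(\delta(\Ran T,\Ran S),\delta(\Ran S,\Ran T)\big)\leq\max\big(\|(T-S)\tilde T^{-1}\|,\|(T-S)\tilde S^{-1}\|\big),
\]
which is the claimed estimate.

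I do not anticipate a genuine obstacle here; the proof is essentially a bookkeeping argument. The only point that requires a little care is the use of $\tilde T^{-1}$ rather than $T^{-1}$: writing $x=Ty=\tilde T x$ one must make sure the composition $(T-S)\tilde T^{-1}$ is interpreted as an everywhere-defined bounded operator on all of $\cH$ (not merely on $\Ran T$), so that its operator norm is what appears on the right-hand side. This is consistent with the paper's convention of preferring $\tilde T^{-1}:\Ran T\to\cK$, and the fact that we only evaluate it on $x\in\Ran T$ makes the manipulation legitimate.
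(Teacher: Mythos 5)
Your proof is correct and follows essentially the same route as the paper's: parametrize $x\in B_{\Ran T}$ as $x=T\tilde T^{-1}x$, take $S\tilde T^{-1}x\in\Ran S$ as the approximating vector, and obtain the symmetric estimate by exchanging the roles of $T$ and $S$. One small correction to your closing remark: $(T-S)\tilde T^{-1}$ is not everywhere defined on $\cH$ but only on the closed subspace $\Ran T$, which is itself a Banach space, so its operator norm in $\cB(\Ran T,\cH)$ (the supremum over $B_{\Ran T}$) is precisely the quantity appearing in (\ref{psh}), and this is exactly how your computation uses it.
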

\proof Let $x\in B_{\Ran T}$. Clearly, $x=T\tilde T^{-1}x$. Hence
\begin{eqnarray*}
\dist(x,\Ran S)&\leq &\|x-S\tilde T^{-1}x\|\\
&=&\|(T-S)\tilde T^{-1}x\|\leq\|(T-S)\tilde T^{-1}\|,
\end{eqnarray*}
which proves (\ref{psh}). \qed

\begin{definition} A closed subspace of $\cH$ possessing a complementary subspace will be called {\em complemented}.
Let $\Grass_\com(\cH)$ stand for the family of complemented closed subspaces.

For any $\cY\in\Grass(\cH)$ let $\Grass_\cY(\cH)$ denote the family of closed subspaces complementary  to $\cY$.\end{definition}
The following fact  follows immediately from Prop.
\ref{pri1}:
\bep  $\Grass_\cY(\cH)$ and
 $\Grass_\com(\cH)$  are open subsets of
 $\Grass(\cH)$.\eep

\bep If $T\in \cB_\inv(\cH)$, then 
\[\Grass(\cH)\ni\cX\mapsto T\cX\in\Grass(\cH)\]
is bicontinuous. It preserves the complementarity relation, and hence it maps 
$\Grass_\com(\cH)$ into itself. \label{kjh1}\eep


\subsection{Continuous families of subspaces}

In this section $\Theta$ will be a locally compact space (eg. an open subset of $\C$.)
Consider a function 
\begin{equation}
\Theta\ni z\mapsto \cX_z\in \Grass(\cH).\label{func}\end{equation}

\begin{proposition} Let $\cY\in\Grass(\cH)$.
If  (\ref{func})  has values in $\Grass_\cY(\cH)$, then it 
is continuous  iff
\[  \Theta\ni z\mapsto P_{\cX_z,\cY}\in\Pr(\cH)\]
is continuous.
\end{proposition}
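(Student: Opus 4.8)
The plan is to reduce everything to the quantitative estimates relating the gap of two subspaces to the norm of the difference of the corresponding projections, which are already available in Corollaries \ref{pri0.0} and \ref{pri1.0}. First note that since each $\cX_z$ is complementary to the fixed closed subspace $\cY$, the idempotent $P_{\cX_z,\cY}$ has closed range $\cX_z$, closed kernel $\cY$ and closed sum $\cX_z+\cY=\cH$, so it is genuinely a bounded projection; hence $z\mapsto P_{\cX_z,\cY}\in\Pro(\cH)$ is a well-defined map into a normed space. I will use throughout that gap-continuity of $z\mapsto\cX_z$ at a point $z_0$ amounts to $\hat\delta(\cX_z,\cX_{z_0})\to0$ as $z\to z_0$, i.e. $\delta(\cX_z,\cX_{z_0})\to0$ and $\delta(\cX_{z_0},\cX_z)\to0$.

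For the implication ``$P$ continuous $\Rightarrow\cX$ continuous'', fix $z_0$. Since $\cX_z$ and $\cX_{z_0}$ are both complementary to the same $\cY$, Corollary \ref{pri0.0} applies and gives
\[\hat\delta(\cX_z,\cX_{z_0})\ \leq\ \|P_{\cX_z,\cY}-P_{\cX_{z_0},\cY}\|.\]
If the right-hand side tends to $0$ as $z\to z_0$, then so does the gap, which is exactly gap-continuity at $z_0$. As $z_0$ was arbitrary, $z\mapsto\cX_z$ is continuous.

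For the converse, which is the main point, fix $z_0$ and abbreviate $\cX=\cX_{z_0}$, $\cX'=\cX_z$. Both are complementary to $\cY$, and $\|P_{\cX_{z_0},\cY}\|$, $\|P_{\cY,\cX_{z_0}}\|$ are fixed finite constants. Assuming gap-continuity, $\delta(\cX_z,\cX_{z_0})$ and $\delta(\cX_{z_0},\cX_z)$ tend to $0$, so for $z$ close enough to $z_0$ the smallness hypotheses of Corollary \ref{pri1.0} are satisfied. That corollary then yields
\[\|P_{\cX_z,\cY}-P_{\cX_{z_0},\cY}\|\ \leq\ \frac{\|P_{\cX_{z_0},\cY}\|\,\|P_{\cY,\cX_{z_0}}\|\,\delta(\cX_z,\cX_{z_0})}{1-\|P_{\cX_{z_0},\cY}\|\,\delta(\cX_z,\cX_{z_0})},\]
whose right-hand side tends to $0$ as $z\to z_0$; hence $z\mapsto P_{\cX_z,\cY}$ is norm-continuous at $z_0$.

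The step I expect to be the crux is this converse direction: the bound of Corollary \ref{pri1.0} controls the projection difference only because $\cY$ is held fixed, so the projection norms $\|P_{\cX_{z_0},\cY}\|$ and $\|P_{\cY,\cX_{z_0}}\|$ entering the estimate stay bounded (in fact constant) along the approach $z\to z_0$. Without a fixed common complement such uniform control would fail, which is precisely why the hypothesis that $z\mapsto\cX_z$ takes values in $\Grass_\cY(\cH)$ is essential. The only mild care needed, used implicitly in both directions, is the equivalence between continuity in the gap topology and $\hat\delta(\cX_z,\cX_{z_0})\to0$.
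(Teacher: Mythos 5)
Your proof is correct and follows exactly the paper's own route: the paper's proof consists precisely of invoking Corollaries \ref{pri0.0} and \ref{pri1.0}, which you have spelled out in full (Corollary \ref{pri0.0} for the direction from projection continuity to gap continuity, and Corollary \ref{pri1.0}, with its smallness hypotheses secured by gap continuity near $z_0$, for the converse). Your added remark about why the fixed complement $\cY$ keeps the projection norms under control is a fair articulation of what makes the cited estimates applicable.
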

\proof We use Corrolaries \ref{pri0.0} and \ref{pri1.0}. \qed

\begin{definition}
 We say that 
\begin{equation}
\Theta\ni z\mapsto T_z\in \cB_\linv(\cK,\cH)
\label{func71}\end{equation} is an {\em injective  resolution} of (\ref{func}) if, for any
 $z\in\Theta$, $T_z$ is a bijection onto $\cX_z$.
\label{def-resol}
\end{definition}

\begin{proposition}\label{pro1a} Let $z_0\in\Theta$.
\begin{enumerate} \item 
If there exists an open $\Theta_0$ such that $z_0\in\Theta_0\subset\Theta$ and 
an injective  resolution of (\ref{func})
 on $\Theta_0$ continuous at $z_0$, then (\ref{func}) is continuous at $z_0$.
\item If  (\ref{func}) has values in $\Grass_\com(\cH)$, then we can put ``if and only if'' in 1.
\end{enumerate}
\end{proposition}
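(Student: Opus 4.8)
The plan is to prove the two parts separately, using the quantitative gap estimates already established. For part (1), suppose $z\mapsto T_z$ is an injective resolution on $\Theta_0$, continuous at $z_0$. By Definition \ref{def-resol} each $T_z\in\cB_\linv(\cK,\cH)$ with $\Ran T_z=\cX_z$, so I would apply Proposition \ref{pro1} with the pair $T_{z_0}$ and $T_z$. This gives
\[
\hat\delta(\cX_{z_0},\cX_z)\leq\max\big(\|(T_{z_0}-T_z)\tilde T_{z_0}^{-1}\|,\ \|(T_{z_0}-T_z)\tilde T_z^{-1}\|\big).
\]
The first factor $\|(T_{z_0}-T_z)\tilde T_{z_0}^{-1}\|$ tends to $0$ as $z\to z_0$ since $z\mapsto T_z$ is continuous at $z_0$ and $\tilde T_{z_0}^{-1}$ is a fixed bounded operator. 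For the second factor I need $\|\tilde T_z^{-1}\|$ to stay bounded near $z_0$; this follows from the perturbation estimate (\ref{wew}), because $\|T_z-T_{z_0}\|<\|\tilde T_{z_0}^{-1}\|^{-1}$ for $z$ close enough to $z_0$, whence $\|\tilde T_z^{-1}\|^{-1}\geq\|\tilde T_{z_0}^{-1}\|^{-1}-\|T_z-T_{z_0}\|$ stays bounded away from $0$. Therefore $\hat\delta(\cX_{z_0},\cX_z)\to 0$, which is exactly gap-continuity at $z_0$.

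For part (2), assuming the values lie in $\Grass_\com(\cH)$, I must produce, conversely, a continuous injective resolution near any point of continuity $z_0$. Here I would fix a subspace $\cY\in\Grass(\cH)$ complementary to $\cX_{z_0}$ (which exists since $\cX_{z_0}$ is complemented). By the openness statement preceding the continuous-families subsection ($\Grass_\cY(\cH)$ is open), and by gap-continuity of $z\mapsto\cX_z$ at $z_0$, there is a neighbourhood $\Theta_0$ of $z_0$ on which $\cX_z$ remains complementary to $\cY$. On $\Theta_0$ the preceding proposition (continuity of $z\mapsto\cX_z$ is equivalent to continuity of $z\mapsto P_{\cX_z,\cY}$) gives that $z\mapsto P_{\cX_z,\cY}$ is norm-continuous.

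The key construction is then to manufacture an explicit resolution from these projections. Writing $Q:=P_{\cX_{z_0},\cY}$ and composing the projection with the embedding $J_{\cX_{z_0}}$, I would set
\[
T_z:=P_{\cX_z,\cY}\,J_{\cX_{z_0}}\in\cB(\cX_{z_0},\cH).
\]
Since $P_{\cX_z,\cY}$ maps $\cH$ onto $\cX_z$ with kernel $\cY$, and $\cX_{z_0}$ is complementary to $\cY$, Proposition \ref{pro31} (the invertibility of $\tilde P_{\cX_z,\cY}J_{\cX_{z_0}}$) guarantees that for $z$ near $z_0$ this map is a bijection of $\cX_{z_0}$ onto $\cX_z$, hence lies in $\cB_\linv(\cX_{z_0},\cH)$ with range exactly $\cX_z$; this is the injective resolution required by Definition \ref{def-resol}. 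Its continuity in $z$ is inherited directly from the established continuity of $z\mapsto P_{\cX_z,\cY}$, since $J_{\cX_{z_0}}$ is a fixed bounded operator. Combined with part (1), this yields the ``if and only if''.

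The main obstacle I anticipate is part (2): one must verify that shrinking to a neighbourhood $\Theta_0$ makes $T_z=P_{\cX_z,\cY}J_{\cX_{z_0}}$ genuinely left-invertible with the correct range \emph{uniformly}, rather than merely injective pointwise. This rests on Proposition \ref{pro31} together with the openness of the complementarity relation, so the care needed is in confirming that gap-continuity at $z_0$ propagates into norm-continuity of the projections on a fixed neighbourhood (via the preceding proposition and Corollaries \ref{pri0.0}, \ref{pri1.0}) before extracting the resolution; the complementability hypothesis is precisely what makes this possible and is unavoidable, since without a complementary $\cY$ there is no bounded projection to build $T_z$ from.
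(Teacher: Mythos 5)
Your proof is correct and follows essentially the same route as the paper's: part (1) via Prop.~\ref{pro1} combined with the stability estimate (\ref{wew}) to keep $\|\tilde T_z^{-1}\|$ bounded near $z_0$, and part (2) via the injective resolution $z\mapsto P_{\cX_z,\cY}J_{\cX_{z_0}}$ justified by Prop.~\ref{pro31} and the openness of $\Grass_\cY(\cH)$. Your explicit remark that the continuity of this resolution follows from the continuity of the projections (the proposition preceding Def.~\ref{def-resol}) is a detail the paper leaves implicit, but the argument is the same.
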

\proof (1): Suppose that (\ref{func71}) is an injective  resolution of
(\ref{func}) which is continuous at $z_0$. 
We can find an open  $\Theta_1$ such that $z_0\in\Theta_1^{\rm cl}\subset\Theta_0$ and for $z\in\Theta_1$ we have $\|T_z-T_{z_0}\|<c\|\tilde T_{z_0}^{-1}\|^{-1}$ with $c<1$. Then
$\|T_z\|$ and $\|\tilde T_z^{-1}\|$ are uniformly bounded for such  $z$. Therefore,  by
 Prop. \ref{pro1}, for such $z$ we have $\hat\delta(\cX_z,\cX_{z_0})\leq C\|T_z-T_{z_0}\|$. Thus the continuity of
 (\ref{func71}) implies the continuity of  (\ref{func}).

(2): Suppose that  (\ref{func}) is continuous at $z_0$ and 
 $\cX_{z_0}$ is complemented. Let
 $\cY\in\Grass(\cH)$ be complementary to $\cX_{z_0}$. There exists an
 open $\Theta_0$ such that $z_0\in\Theta_0\subset\Theta$ and $\cY$ is
 complementary to $\cX_z$, $z\in\Theta_0$. Then, by Prop. \ref{pro31},
 we see that
\[\Theta_0\ni z\mapsto P_{\cX_z,\cY}J_{\cX_{z_0}}\in \cB_\linv(\cX_{z_0},\cH)\]
is an injective  resolution of (\ref{func}) restricted to $\Theta_0$. \qeds

To sum up, functions with values in the Grassmannian that possess continuous injective  resolutions are continuous. If all values of a functions are  complemented, then the existence of a continuous injective  resolution can be adopted as an alternative definition of the continuity.

\subsection{Closed operators}

\begin{definition} For an operator $T$ on $\cH_1$ to $\cH_2$ with domain ${\rm Dom}T$ its graph is the subspace of $\cH_1\oplus\cH_2$ given by
\[{\rm Gr}(T):=\{(x,Tx)\in\cH_1\oplus\cH_2\ :\ x\in\Dom T\}.\]\end{definition}
This induces a map
\beq
\cC(\cH_1,\cH_2)\ni T\mapsto \Gr(T)\in \Grass(\cH_1\oplus\cH_2).\label{func0}
\eeq
From now on, we endow $\cC(\cH_1,\cH_2)$ with the gap topology transported from
$\Grass(\cH_1\oplus\cH_2)$ by (\ref{func0}).
\begin{proposition} 
$\cB(\cH_1,\cH_2)$ is  open in  $\cC(\cH_1,\cH_2)$. On $\cB(\cH_1,\cH_2)$ the gap topology coincides with the usual  norm
  topology.
 \end{proposition}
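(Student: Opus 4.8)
The plan is to identify $\cB(\cH_1,\cH_2)$, under the graph map, with the family of closed subspaces of $\cG:=\cH_1\oplus\cH_2$ that are complementary to the fixed closed subspace $\cY:=\{0\}\oplus\cH_2$, and then to read off both assertions from facts already established for such families. Concretely, I would first show that $T\mapsto\Gr(T)$ is a bijection of $\cB(\cH_1,\cH_2)$ onto $\Grass_\cY(\cG)$. For $T\in\cB(\cH_1,\cH_2)$ the graph is closed, and the decomposition $(x,y)=(x,Tx)+(0,y-Tx)$ shows directly that $\Gr(T)\cap\cY=\{0\}$ and $\Gr(T)+\cY=\cG$, so $\Gr(T)\in\Grass_\cY(\cG)$. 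Conversely, if $\cK\in\Grass_\cY(\cG)$, the coordinate projection $\pi_1:\cG\to\cH_1$ restricts to a bounded bijection $\pi_1|_\cK:\cK\to\cH_1$ (injectivity from $\cK\cap\cY=\{0\}$, surjectivity from $\cK+\cY=\cG$); by the open mapping theorem its inverse is bounded, and $T:=\pi_2(\pi_1|_\cK)^{-1}$ is a bounded operator with $\Gr(T)=\cK$.

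For openness, recall that the gap topology on $\cC(\cH_1,\cH_2)$ is, by definition, transported from $\Grass(\cG)$ through the injective map (\ref{func0}); hence that map is a homeomorphism onto its image $\Gr(\cC(\cH_1,\cH_2))\subset\Grass(\cG)$. Since $\Grass_\cY(\cG)$ is open in $\Grass(\cG)$ (by the proposition asserting that $\Grass_\cY(\cH)$ is open in $\Grass(\cH)$) and is contained in $\Gr(\cC(\cH_1,\cH_2))$, it is open in the subspace topology of the image, so its preimage $\cB(\cH_1,\cH_2)$ is open in $\cC(\cH_1,\cH_2)$.

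For the coincidence of topologies I would compute the projection explicitly. From the decomposition above,
\[P_{\Gr(T),\cY}=\mat{\one}{0}{T}{0},\]
so that $P_{\Gr(T),\cY}-P_{\Gr(S),\cY}$ sends $(x,y)$ to $(0,(T-S)x)$ and thus $\|P_{\Gr(T),\cY}-P_{\Gr(S),\cY}\|=\|T-S\|$ for the max-norm on $\cG$ (any admissible product norm gives an equivalent quantity). Hence $T\mapsto P_{\Gr(T),\cY}$ is a (norm) homeomorphism of $\cB(\cH_1,\cH_2)$ onto its range in $\Pro(\cG)$. It then remains to see that on $\Grass_\cY(\cG)$ the gap topology agrees with the topology induced by $\cX\mapsto P_{\cX,\cY}$: one inclusion is Corollary \ref{pri0.0}, which gives $\hat\delta(\cX,\cX')\le\|P_{\cX,\cY}-P_{\cX',\cY}\|$ (proximity of projections forces proximity in the gap), and the reverse is Corollary \ref{pri1.0}, which bounds $\|P_{\cX,\cY}-P_{\cX',\cY}\|$ by a quantity that tends to $0$ with $\delta(\cX',\cX)$. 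Chaining these identifications yields that the gap topology on $\cB(\cH_1,\cH_2)$ is the operator-norm topology.

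The only genuinely quantitative point is the equivalence of the gap and projection-norm topologies on $\Grass_\cY(\cG)$, and I expect this to be the main obstacle; however it is already packaged in Corollaries \ref{pri0.0} and \ref{pri1.0}, so the residual work is bookkeeping — verifying the matrix form of $P_{\Gr(T),\cY}$ and checking that, when applying Corollary \ref{pri1.0}, the norms $\|P_{\cX,\cY}\|$ and $\|P_{\cY,\cX}\|$ stay controlled on a neighborhood of the reference operator, so that the estimate (\ref{contri1.0}) is uniform there.
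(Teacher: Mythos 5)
Your proof is correct, and it is essentially the argument the paper intends: the paper states this proposition without proof, as an immediate consequence of the machinery it has just set up, and your write-up supplies exactly that reasoning. The identification of $\cB(\cH_1,\cH_2)$ under the graph map with $\Grass_\cY(\cH_1\oplus\cH_2)$ for $\cY=\{0\}\oplus\cH_2$ is right (including the open-mapping-theorem step in the converse direction), openness then follows from the paper's proposition that $\Grass_\cY$ is open in $\Grass$, and the coincidence of topologies follows from the explicit formula $P_{\Gr(T),\cY}=\mat{\one}{0}{T}{0}$ combined with Corollaries \ref{pri0.0} and \ref{pri1.0}. One small remark: the uniformity issue you flag at the end is not actually an issue. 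In Corollary \ref{pri1.0} the subspace $\cX$ is the fixed reference point and only $\cX'$ varies, so the constants $\|P_{\cX,\cY}\|$ and $\|P_{\cY,\cX}\|$ appearing in the bound (\ref{contri1.0}) are fixed numbers attached to the limit point; no control over a neighborhood is needed to conclude that the gap and projection-norm topologies have the same neighborhood filters at each point.
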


Operators whose graphs are complemented seem to have better properties.
\bed We denote by $\cC_\com(\cH_1,\cH_2)$ the set of closed operators with complemented graphs. \eed
Clearly,  $\cC_\com(\cH_1,\cH_2)$ is an open subset of  $\cC(\cH_1,\cH_2)$.

\bep\label{kjh2} Let $S\in\cB(\cH_1,\cH_2)$. Then the map
\beq \cC(\cH_1,\cH_2)\ni 
T\mapsto T+S\in \cC(\cH_1,\cH_2)\label{kjh}\eeq
is bicontinuous and preserves $\cC_\com(\cH_1,\cH_2)$
\eep
\proof (\ref{kjh}) on the level of graphs acts by
\[\left[\begin{array}{cc}\one&0\\S&\one\end{array}\right],\]
which is clearly in $\cB_\inv(\cH_1\oplus\cH_2)$. Thus the
proposition follows by Prop. \ref{kjh1}.
\qed

\bep \ben\item
Graphs of invertible operators are complemented.
\item
Graphs of operators whose resolvent set
  is nonempty  are complemented.\een
\eep
\proof By Prop. \ref{kjh2} applied to $-\lambda\one$, it is enough to
show (1).
We will show that if $T\in\cC_\inv(\cH_1,\cH_2)$, then
 $\cH_1\oplus\{0\}$ is complementary to $\Gr(T)$.

Indeed,
\[\cH_1\oplus\{0\}\cap\Gr(T)=\{0,0\}\]
is obviously true for any operator $T$.

Any $(v,w)\in\cH_1\oplus\cH_2$ can be written as
\[\big(v-T^{-1}w,0\big)+\big(T^{-1}w,w).\qed \]

\subsection{Continuous families of closed operators}

Consider a function
\begin{equation} \Theta\ni z\mapsto T_z\in\cC(\cH_1,\cH_2).\label{func1}\end{equation}

\begin{proposition} \label{propi}
Let $z_0\in\Theta$.
Suppose that there exists an open 
$\Theta_0$ such that $z_0\in\Theta_0\subset\Theta$, a Banach space $\cK$ and a  function \begin{equation}
\Theta_0\ni z\mapsto W_{z}\in \cB(\cK,\cH_1)\label{func2f}\end{equation}
 s.t. $W_z$ maps bijectively $\cK$ onto $\Dom(T_{z})$ for all $z\in\Theta_0$,
\begin{equation}
\Theta_0\ni z\mapsto T_{z}W_{z}\in \cB(\cK,\cH_2),\label{func3f}
\end{equation}
and both (\ref{func2f}) and (\ref{func3f}) are continuous at $z_0$. Then (\ref{func1}) is continuous at $z_0$.
\end{proposition}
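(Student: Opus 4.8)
The plan is to reduce the statement to Prop. \ref{pro1a}(1) by exhibiting a continuous injective resolution of the Grassmannian-valued function $z\mapsto\Gr(T_z)\in\Grass(\cH_1\oplus\cH_2)$. Indeed, by the very definition of the gap topology on $\cC(\cH_1,\cH_2)$ as transported along the graph map (\ref{func0}), the continuity of (\ref{func1}) at $z_0$ is nothing but the continuity of $z\mapsto\Gr(T_z)$ at $z_0$ in the gap topology on $\Grass(\cH_1\oplus\cH_2)$.

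First I would introduce the operator $V_z\in\cB(\cK,\cH_1\oplus\cH_2)$ defined by $V_zk:=(W_zk,\,T_zW_zk)$. It is bounded because both $W_z$ and $T_zW_z$ are bounded, and it is injective because $W_z$ is injective (being a bijection onto $\Dom T_z$). Moreover its range is exactly
\[
\Ran V_z=\{(W_zk,T_zW_zk):k\in\cK\}=\{(x,T_zx):x\in\Dom T_z\}=\Gr(T_z),
\]
where the middle equality uses that $W_z$ maps $\cK$ onto $\Dom T_z$. Thus $V_z$ is a bijection of $\cK$ onto $\Gr(T_z)$.

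The key point is to check that $V_z$ is left invertible, i.e. that $V_z\in\cB_\linv(\cK,\cH_1\oplus\cH_2)$. Since $T_z$ is closed, $\Gr(T_z)=\Ran V_z$ is a closed subspace of $\cH_1\oplus\cH_2$, hence itself a Banach space. Therefore $\tilde V_z:\cK\to\Gr(T_z)$ is a bounded bijection between Banach spaces, and the bounded inverse (open mapping) theorem shows that $\tilde V_z^{-1}$ is bounded; by Thm. \ref{linv} this is precisely left invertibility of $V_z$. Consequently $\Theta_0\ni z\mapsto V_z$ is an injective resolution of $z\mapsto\Gr(T_z)$ in the sense of Def. \ref{def-resol}. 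I expect this verification of left invertibility to be the main conceptual step: it crucially exploits the closedness of $T_z$ and does \emph{not} follow from any left invertibility of $W_z$ alone, since $\Dom T_z$ need not be closed in $\cH_1$ and so $W_z$ need not have closed range.

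Finally, continuity of $z\mapsto V_z$ at $z_0$ is immediate from the hypotheses: for the natural norm on the direct sum one has
\[
\|V_z-V_{z_0}\|\leq\|W_z-W_{z_0}\|+\|T_zW_z-T_{z_0}W_{z_0}\|\longrightarrow0\quad(z\to z_0),
\]
by the assumed continuity of (\ref{func2f}) and (\ref{func3f}) at $z_0$. Applying Prop. \ref{pro1a}(1) with $\cH$ replaced by $\cH_1\oplus\cH_2$ and $\cX_z:=\Gr(T_z)$ then yields the continuity of $z\mapsto\Gr(T_z)$, and hence of (\ref{func1}), at $z_0$.
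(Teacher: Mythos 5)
Your proof is correct and follows essentially the same route as the paper: the paper's proof also forms the map $z\mapsto(W_z,T_zW_z)\in\cB_\linv(\cK,\cH_1\oplus\cH_2)$, observes it is a continuous injective resolution of $z\mapsto\Gr(T_z)$, and invokes Prop.~\ref{pro1a}(1). Your explicit verification of left invertibility via the closedness of $T_z$ and the bounded inverse theorem is a detail the paper leaves implicit, and it is exactly the right justification.
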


\proof
Notice that
\beq\Theta_0\ni z\mapsto\big(W_z, T_zW_z)\in\cB_\linv(\cK,\cH_1\oplus\cH_2)\label{func2b}\eeq
 is an injective   resolution
 of
\begin{equation} \Theta_0\ni z\mapsto \Gr(T_z)\in\Grass(\cH_1\oplus\cH_2).\label{func2a}\end{equation}
(Actually,  every injective   resolution of (\ref{func2a})  is of the form (\ref{func2b}).)
The   injective resolution (\ref{func2b}) is continuous at $z_0$, hence
 (\ref{func1}) is continuous at $z_0$ by Prop. \ref{pro1a} (1). \qeds

{  A function $z\mapsto W_z$ with the properties described in 
Prop.
 \ref{propi} will be called a {\em resolution of continuity of $z\mapsto T_z$ at $z_0$}.}

{
\bep 
 Suppose that
 $z_0\in\Theta$,
 there exists an open 
$\Theta_0$ such that $z_0\in\Theta_0\subset\Theta$, and $T_z\in \cC_\inv(\cH_1,\cH_2)$, $z\in \Theta_0$.
Then
\[z\mapsto T_z\in\cC(\cH_1,\cH_2)\]
is continuous at $z_0$ iff
\[z\mapsto T_z^{-1}\in\cB(\cH_2,\cH_1)\]
is continuous at $z_0$.\label{priu}
\eep
\proof $\Rightarrow$: Let $z\mapsto W_z$ be a resolution of continuity of $z\mapsto T_z$ at $z_0$. Then
$z\mapsto V_z:=T_zW_z$
is invertible bounded and continuous at $z_0$. Hence, so is $z\mapsto V_z^{-1}$. Therefore, $z\mapsto T_z^{-1}=W_zV_z^{-1}$ is bounded and continuous at $z_0$.

$\Leftarrow$: Obviously, $z\mapsto T_z^{-1}$ is a resolution of continuity of $z\mapsto T_z$ at $z_0$. \qeds

The following proposition is an immediate consequence of Prop. \ref{priu}.}
\bep { Let $\lambda\in\C$ and consider a function
\beq\label{eq:deftzt}
z\mapsto T_z\in\cC(\cH).
\eeq}{
Suppose there exists an open $\Theta_0$ such that $z_0\in\Theta_0\subset\Theta$, and $\lambda\in\rs (T_z)$, $z\in \Theta_0$.
Then (\ref{eq:deftzt}) is continuous at $z_0$ iff
\[z\mapsto (\lambda\one-T_z)^{-1}\in\cB(\cH)\]
is continuous at $z_0$.\label{priu1}}
\eep

\subsection{Holomorphic families of closed subspaces}

Let $\Theta$ be an open subset of ${\mathbb C}$ and suppose we are given a function
\begin{equation}
\Theta\ni z\mapsto\cX_z\in\Grass(\cH).\label{qeq6}\end{equation}

\begin{definition}
We will say that the family (\ref{qeq6}) is {\em complex differentiable at $z_0$} if
there exists an open $\Theta_0$ with $z_0\in\Theta_0\subset\Theta$ and an injective resolution
\begin{equation}
\Theta_0\ni z\mapsto T_z\in B_\linv(\cK,\cH)\label{qeq7}\end{equation}
 of (\ref{qeq6}) complex differentiable at $z_0$.
If  (\ref{qeq6})  is complex differentiable on the whole $\Theta$, we say it is {\em holomorphic}.
\label{def-holo}
\end{definition}

Clearly, the complex differentiability implies the continuity.

\bep Suppose that 
 (\ref{qeq6}) is  complex differentiable at  $z_0\in \Theta$.
Let $\Theta_0$ be open with $z_0\in\Theta_0\subset\Theta$, and let
$\cY$ be a subspace complementary to
 $\cX_z$, $z\in\Theta_0$. Then the family 
 $\Theta_0\ni z\mapsto P_{\cX_z,\cY}$ is complex differentiable at $z_0$.
\label{stw1}\eep

\proof By making, if needed, $\Theta_0$ smaller, we can assume that we
have an injective  resolution $\Theta_0\ni z\mapsto T_z$ complex
differentiable at $z_0$.  For such
$z$, by Prop. \ref{pwo1},
 $\tilde P_{\cX_{z_0},\cY}T_z$ is invertible. Therefore, by Prop. \ref{pwo},
\beq
P_{\cX_z,\cY}= T_z(\tilde P_{\cX_{z_0},\cY} T_z)^{-1}\tilde
P_{\cX_{z_0},\cY}.\label{pwi}\eeq 
(\ref{pwi}) is clearly complex differentiable at $z_0$. \qed

\bep\label{stw6}
Let $z_0\in \Theta$. Suppose that $\cX_{z_0}\in\Grass_\com(\cH)$. 
The following are equivalent:
\begin{enumerate}
\item
 (\ref{qeq6}) is  complex differentiable at $z_0$.
\item
There exists an open $\Theta_0$ with $z_0\in\Theta_0\subset\Theta$ and a closed subspace $\cY$ complementary to
 $\cX_z$, $z\in\Theta_0$, such that the family 
 $\Theta_0\ni z\mapsto P_{\cX_z,\cY}$ is complex differentiable at $z_0$.
\end{enumerate}
\eep

\proof (1)$\Rightarrow$(2): Consider the injective   resolution
(\ref{qeq7}). Let $\cY$ be a subspace complementary to $\cX_{z_0}=\Ran
T_{z_0}$. We know that $\Theta\ni z\mapsto\cX_z$ is continuous by
Prop. \ref{pro1a}. Hence, by taking $\Theta_0$ smaller, we can assume that
$\cY$ is complementary to $\cX_z$, $z\in\Theta_0$.
By Prop. \ref{stw1}, $P_{\cX_z,\cY}$ is complex differentiable at $z_0$.




 (2)$\Rightarrow$(1):
  \[\Theta_0\ni z\mapsto P_{\cX_z,\cY}J_{\cX_{z_0}}\in B_\linv(\cX_{z_0},\cH)\] is an injective resolution of  (\ref{qeq6}) complex differentiable at $z_0$. \qed




\bep[Uniqueness of analytic continuation for subspaces]\label{theorem:bruk1} Let $\Theta\subset\C$ be connected
  and open. Let 
\[\Theta\ni z\mapsto \cX_{z},\cY_{z}\in
  \Grass_\com (\cH)\] be holomorphic. 
Consider a sequence $\left\{z_1,z_2,\dots\right\}\subset\Theta$  converging to a point $z_0\in\Theta$ s.t. $z_n\neq z_0$ for each $n$. Suppose $\cX_{z_n}=\cY_{z_n}$, $n=1,2,\dots$. Then $\cX_{z}=\cY_{z}$ for all $z\in\Theta$.
\eep
\begin{proof} For  holomorphic functions with values in bounded operators the unique continuation property is straightforward. Therefore, it suffices to apply Prop. \ref{stw6}.\end{proof}

\bep Let (\ref{qeq6}) and
 $\Theta\ni z\mapsto T_z\in \cB(\cH,\cH_1)$ be holomorphic. Suppose
that for all $z\in\Theta$, $T_z$ is injective on $\cX_z$ and
$T_z\cX_z$ is closed. Then
\beq
\Theta\ni z\mapsto T_z\cX_z\in \Grass(\cH_1)\label{stwi6}\eeq
is holomorphic.\label{stwi9}\eep
\proof
Let $\Theta_0\ni z\mapsto S_z\in \cB_\linv(\cK,\cH)$ be a holomorphic injective
 resolution of  (\ref{qeq6}). Then $ \Theta_0\ni z\mapsto T_zS_z\in
\cB_\linv(\cK,\cH_1)$ is a holomorphic injective   resolution of
(\ref{stwi6}). \qed

\subsection{Holomorphic families of closed operators}

Consider a function
\begin{equation}
\Theta\ni z \mapsto T_z\in \cC(\cH_1,\cH_2).\label{qeq5a}\end{equation}
\begin{definition} Let $z_0\in\Theta$. We say that 
 (\ref{qeq5a}) is complex differentiable  at $z_0$ if 
\begin{equation}
\Theta\ni z \mapsto \Gr(T_z)\in \Grass(\cH_1\oplus\cH_2)\label{qeq5c}\end{equation}
is complex differentiable at $z_0$.
\label{def-holo-op}
\end{definition}

{ The following proposition gives an equivalent condition, which in most of the literature is adopted as the basic  definition of the  complex differentiablity of functions with values in closed operators.}

\begin{proposition}
 { (\ref{qeq5a}) is complex differentiable  at  $z_0\in\Theta$
iff there exists an open 
$\Theta_0$ such that $z_0\in\Theta_0\subset\Theta$, a Banach space $\cK$ and a  function \begin{equation}
\Theta_0\ni z\mapsto W_{z}\in \cB(\cK,\cH_1)\label{func2}\end{equation}
 s.t. $W_z$ maps bijectively $\cK$ onto $\Dom(T_{z})$ for all $z\in\Theta_0$, 
\begin{equation}
\Theta_0\ni z\mapsto T_{z}W_{z}\in \cB(\cK,\cH_2),\label{func3}
\end{equation} and both
 (\ref{func2}) and (\ref{func3}) are 
 are complex differentiable at $z_0$.
\label{func7}}
\end{proposition}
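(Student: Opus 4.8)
The plan is to unwind the two definitions involved and reduce everything to the correspondence between the functions $z\mapsto W_z$ appearing in the statement and injective resolutions of the graph family $z\mapsto\Gr(T_z)$; the argument mirrors that of Prop.~\ref{propi}, with ``continuous'' replaced by ``complex differentiable''. By Definition~\ref{def-holo-op}, complex differentiability of (\ref{qeq5a}) at $z_0$ means precisely that $z\mapsto\Gr(T_z)$ is complex differentiable at $z_0$, which by Definition~\ref{def-holo} amounts to the existence, on some open neighbourhood $\Theta_0$ of $z_0$, of an injective resolution $z\mapsto S_z\in\cB_\linv(\cK,\cH_1\oplus\cH_2)$ that is complex differentiable at $z_0$. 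The whole proposition then follows from the identification, already noted in the proof of Prop.~\ref{propi}, that such injective resolutions are exactly the maps $z\mapsto(W_z,T_zW_z)$, together with the fact that complex differentiability passes through the bounded coordinate projections $\pi_j:\cH_1\oplus\cH_2\to\cH_j$.

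For the direction $(\Leftarrow)$ I would take $W_z$ as given and set $S_zk:=(W_zk,\,T_zW_zk)$. Since $W_z$ maps $\cK$ bijectively onto $\Dom(T_z)$, the range of $S_z$ is exactly $\Gr(T_z)$; injectivity of $S_z$ follows from that of $W_z$; and since $\Gr(T_z)$ is closed (as $T_z\in\cC(\cH_1,\cH_2)$) while $S_z$ is bounded, Theorem~\ref{linv} gives $S_z\in\cB_\linv(\cK,\cH_1\oplus\cH_2)$. Thus $z\mapsto S_z$ is an injective resolution of $z\mapsto\Gr(T_z)$, and it is complex differentiable at $z_0$ because its two components $W_z$ and $T_zW_z$ are. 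By Definitions~\ref{def-holo} and~\ref{def-holo-op} this yields the complex differentiability of (\ref{qeq5a}) at $z_0$.

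For the direction $(\Rightarrow)$ I would start from a complex differentiable injective resolution $z\mapsto S_z\in\cB_\linv(\cK,\cH_1\oplus\cH_2)$ of $z\mapsto\Gr(T_z)$ on some $\Theta_0$, and recover $W_z:=\pi_1S_z$. Since $\Ran S_z=\Gr(T_z)$, every $S_zk$ has the form $(W_zk,\,T_zW_zk)$; in particular $\pi_2S_z=T_zW_z$ and $\Ran W_z=\Dom(T_z)$, while $W_z$ is injective, because $W_zk=0$ forces $T_zW_zk=T_z0=0$, hence $S_zk=0$ and $k=0$. Thus $W_z$ maps $\cK$ bijectively onto $\Dom(T_z)$, and $W_z=\pi_1S_z$, $T_zW_z=\pi_2S_z$ are complex differentiable at $z_0$ since $\pi_1,\pi_2$ are bounded.

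I do not expect a serious obstacle here; the content is essentially bookkeeping. The only points requiring a little care are the verification that the first coordinate $\pi_1S_z$ of an arbitrary injective resolution of the graph is both injective and surjective onto $\Dom(T_z)$ --- which is where the closedness of $T_z$ (so that $\Gr(T_z)$ is a genuine graph with $T_z0=0$) is implicitly used --- and the observation that complex differentiability, being defined through difference quotients, is preserved under pre- and post-composition with bounded linear maps, so that it transfers freely between $S_z$ and the pair $(W_z,T_zW_z)$.
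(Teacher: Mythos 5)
Your proof is correct and takes essentially the same approach as the paper: the paper's own proof consists of one line invoking the fact, noted in the proof of Prop.~\ref{propi}, that $z\mapsto(W_z,T_zW_z)$ is an injective resolution of $z\mapsto\Gr(T_z)$ and that every injective resolution of the graph family is of this form, which is precisely the correspondence you verify. Your two directions simply spell out the bookkeeping (injectivity and surjectivity of $\pi_1 S_z$ onto $\Dom(T_z)$, left invertibility via Thm.~\ref{linv}, transfer of complex differentiability through bounded coordinate maps) that the paper leaves implicit.
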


\proof We use the fact, noted in the proof of Prop. \ref{propi}, that
(\ref{func2b})
 is an injective   resolution of
(\ref{func2a}), { and that every injective resolution is of this form}. \qeds

{  A function $z\mapsto W_z$ with the properties described in 
Prop.
 \ref{func7} will be called a {\em  resolution of complex differentiability of $z\mapsto T_z$ at $z_0$}.}

The following theorem follows immediately from Thm \ref{theorem:bruk1}:

\begin{theorem}[Uniqueness of analytic continuation for closed operators \cite{B}]\label{theorem:bruk} Let $\Theta\subset\C$ be connected
  and open. Let \[\Theta\ni z\mapsto T_{z},S_{z}\in
  \cC_\com(\cH_1,\cH_2)\] be holomorphic. 
Consider a sequence $\left\{z_1,z_2,\dots\right\}\subset\Theta$  converging to a point $z_0\in\Theta$ s.t. $z_n\neq z_0$ for each $n$. Suppose $T_{z_n}=S_{z_n}$, $n=1,2,\dots$. Then $T_{z}=S_{z}$ for all $z\in\Theta$.
\end{theorem}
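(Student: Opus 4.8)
The plan is to reduce the statement to Prop. \ref{theorem:bruk1} by passing to graphs. First I would set $\cH:=\cH_1\oplus\cH_2$ and define $\cX_z:=\Gr(T_z)$ and $\cY_z:=\Gr(S_z)$, viewed as elements of $\Grass(\cH)$ via the identification (\ref{func0}). Since by assumption $T_z,S_z\in\cC_\com(\cH_1,\cH_2)$, their graphs are complemented, so in fact $\cX_z,\cY_z\in\Grass_\com(\cH)$ for every $z\in\Theta$. Moreover, by Def. \ref{def-holo-op} the holomorphy of $z\mapsto T_z$ (resp. $z\mapsto S_z$) as a function with values in closed operators is, by definition, exactly the holomorphy of $z\mapsto\Gr(T_z)$ (resp. $z\mapsto\Gr(S_z)$) in the Grassmannian. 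Hence both $z\mapsto\cX_z$ and $z\mapsto\cY_z$ are holomorphic $\Grass_\com(\cH)$-valued functions on $\Theta$.

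Next I would translate the hypothesis along the sequence. The map $T\mapsto\Gr(T)$ is injective, since two operators with the same graph have the same domain and coincide on it; therefore $T_{z_n}=S_{z_n}$ is equivalent to $\cX_{z_n}=\cY_{z_n}$ for each $n$. Thus the families $z\mapsto\cX_z$ and $z\mapsto\cY_z$ satisfy all the hypotheses of Prop. \ref{theorem:bruk1}: they are holomorphic functions with values in $\Grass_\com(\cH)$ on the connected open set $\Theta$, and they agree along the sequence $z_n\to z_0$ with $z_n\neq z_0$.

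Applying Prop. \ref{theorem:bruk1} then yields $\cX_z=\cY_z$, that is $\Gr(T_z)=\Gr(S_z)$, for all $z\in\Theta$. Invoking once more the injectivity of $T\mapsto\Gr(T)$ gives $T_z=S_z$ for every $z\in\Theta$, which is the claim.

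Since the argument is a direct transport of Prop. \ref{theorem:bruk1} through the graph identification, there is no genuine obstacle here: the only points to check are that graphs of operators in $\cC_\com$ land in $\Grass_\com$ and that equality of graphs is equality of operators, both built into the definitions. The actual content of the result sits entirely in Prop. \ref{theorem:bruk1}, whose proof in turn reduces, via Prop. \ref{stw6}, to the elementary unique-continuation property for holomorphic functions with values in bounded operators applied to the projections $P_{\cX_z,\cY}$.
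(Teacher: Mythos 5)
Your proof is correct and is exactly the paper's argument: the paper states that the theorem "follows immediately from" the subspace version (Prop.~\ref{theorem:bruk1}), and you have simply spelled out that immediate reduction via the graph identification, using that $\cC_\com$ maps to $\Grass_\com$ and that $T\mapsto\Gr(T)$ is injective.
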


We also have the holomorphic obvious analogs of Props \ref{priu} and \ref{priu1}, with the word ``continuous'' replaced by ``complex differentiable''.

\subsection{Holomorphic families in the dual space}

\bed Let $\cH^*$ denote the {\em dual space} of $\cH$. We adopt the convention that $\cH^*$ is the space  of anti-linear continuous functionals, cf. \cite{K}. (Sometimes $\cH^*$ is then called the {\em antidual space}). If $\cX\in\Grass(\cH)$, we denote by $\cX^\bot\in\Grass(\cH^*)$ its {\em annihilator}. If $ T\in\cC(\cK,\cH)$ is densely defined, then $T^*\in\cC(\cH^*,\cK^*)$ denotes its {\em adjoint}. \eed

Let $\cX,\cY\in\Grass(\cH)$ be two complementary subspaces. Then $\cX^\bot$, $\cY^\bot$ are also complementary and
\[P_{\cX,\cY}^*=P_{\cY^\bot,\cX^\bot}.\]


{In the proof of the next theorem we will  use the equivalence of various definitions of the holomorphy of functions with values in bounded operators mentioned at the beginning of the introduction \cite{K}.}

\bep[Schwarz reflection principle for subspaces]  A function \[z\mapsto \cX_{z}\in\Grass_\com(\cH)\label{funcinit}\] 
is complex differentiable at $z_0$ {iff}
\beq  z\mapsto \cX_{\bar z}^\perp\in\Grass_\com(\cH^*)\label{func5}\eeq
is complex differentiable at $\bar z_0$. \label{stwi7}\eep

\proof Locally, we can choose $\cY\in\Grass(\cH)$ complementary to
$\cX_z$. If (\ref{funcinit}) is holomorphic then $z\mapsto P_{\cX_z,\cY}$ is holomorphic by Prop. \ref{stw1}. But $\cY^\perp$ is complementary to $\cX_z^\perp$ and
\beq\label{eq:relcompl}
P_{\cX_{z}^\perp,\cY^\perp}=\one-P_{\cX_z,\cY}^*.
\eeq
So $z\mapsto P_{\cX_{\bar z}^\perp,\cY^\perp}$ is complex differentiable at $\bar z_0$. This
means that (\ref{func5}) is complex differentiable.

{Conversely, if (\ref{func5}) is complex differentiable at $\bar z_0$ then $z\mapsto P_{\cX_{{\bar z}}^\perp,\cY^\perp}$ is complex differentiable. By (\ref{eq:relcompl}) this implies $z\mapsto P_{\cX_{\bar z},\cY}^*$ is complex differentiable at ${\bar z}_0$. Therefore, $z\mapsto \bra u| P_{\cX_{\bar z},\cY}^* v \ket$ is complex differentiable for all $u\in \cH^{**}$, $v\in\cH^*$. In particular, by the embedding $\cH\subset\cH^{**}$,  $z\mapsto \bra  u |P_{\cX_{\bar z},\cY}^* v \ket =\overline{\bra v | P_{\cX_{\bar z},\cY} u  \ket}$ is holomorphic for all $u\in \cH$, $v\in\cH^*$. This proves $P_{\cX_z,\cY}$ is complex differentiable at $z_0$, thus (\ref{funcinit}) is complex differentiable as claimed.
\qed

\begin{remark}A direct analogue of Prop. \ref{stwi7} holds for continuity, as can be easily shown using the identity $\delta(\cX,\cY)=\delta(\cY^\perp,\cX^\perp)$ for closed subspaces $\cX,\cY\subset\cH$, cf. \cite{K}.\end{remark}}

We have  an analogous property for functions with values in closed operators.

\begin{theorem}[Schwarz reflection principle for closed operators] \label{cor:adjointhol}Let 
\beq\label{eq:deftsr} 
z\mapsto T_z\in\cC_\com(\cH_1,\cH_2)
\eeq
have values in densely defined operators. Then it is complex differentiable at $z_0$ {iff} \[z\mapsto T^*_{\bar z}\in\cC_\com(\cH_2^*,\cH_1^*)\]
 is complex differentiable at $\bar{z}_0$.
\end{theorem}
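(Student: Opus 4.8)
The plan is to reduce the statement about closed operators to the already-proven Schwarz reflection principle for subspaces (Prop. \ref{stwi7}), by passing through the identification of a closed operator with its graph. The key observation is that taking adjoints of operators corresponds, on the level of graphs, to a simple explicit invertible transformation of $\cH_1\oplus\cH_2$ together with passing to the annihilator.

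First I would recall the standard description of the graph of the adjoint. If $T\in\cC(\cH_1,\cH_2)$ is densely defined, then $\Gr(T^*)\subset\cH_2^*\oplus\cH_1^*$ is, up to a sign and a swap of the two factors, the annihilator of $\Gr(T)\subset\cH_1\oplus\cH_2$. Concretely, using the pairing between $(\cH_1\oplus\cH_2)^*=\cH_1^*\oplus\cH_2^*$ and $\cH_1\oplus\cH_2$, one has $(x,Tx)$ paired with $(-T^*y,y)$ equal to $\bra y|Tx\ket-\bra T^*y|x\ket=0$. Thus if we define the invertible operator
\beq
R:\cH_1^*\oplus\cH_2^*\to\cH_2^*\oplus\cH_1^*,\qquad R(u,v):=(v,-u),
\eeq
then $\Gr(T^*)=R\big(\Gr(T)^\perp\big)$. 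Since $T$ is densely defined and closed, $\Gr(T)^\perp$ is exactly the graph (after this transformation) of the closed densely defined operator $T^*$, and the complementarity of graphs is preserved: $\Gr(T)\in\Grass_\com(\cH_1\oplus\cH_2)$ iff $\Gr(T^*)\in\Grass_\com(\cH_2^*\oplus\cH_1^*)$.

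With this identification in hand, I would argue as follows. By Def. \ref{def-holo-op}, $z\mapsto T_z$ is complex differentiable at $z_0$ iff $z\mapsto\Gr(T_z)\in\Grass_\com(\cH_1\oplus\cH_2)$ is complex differentiable at $z_0$. By Prop. \ref{stwi7} applied in $\cH=\cH_1\oplus\cH_2$, this holds iff $z\mapsto\Gr(T_{\bar z})^\perp\in\Grass_\com\big((\cH_1\oplus\cH_2)^*\big)$ is complex differentiable at $\bar z_0$. Now apply the fixed invertible operator $R$: by Prop. \ref{kjh1}, the map $\cX\mapsto R\cX$ is bicontinuous and, being induced by a constant (hence trivially holomorphic) element of $\cB_\inv$, preserves complex differentiability of Grassmannian-valued families. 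Hence $z\mapsto\Gr(T_{\bar z})^\perp$ is complex differentiable at $\bar z_0$ iff $z\mapsto R\big(\Gr(T_{\bar z})^\perp\big)=\Gr(T_{\bar z}^*)$ is. The latter is exactly the statement that $z\mapsto T_{\bar z}^*$ is complex differentiable at $\bar z_0$, again by Def. \ref{def-holo-op}. Chaining these equivalences gives the theorem.

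The main obstacle I anticipate is not the holomorphy machinery—which is handled cleanly by Prop. \ref{stwi7} once the setup is right—but rather the careful bookkeeping of the graph-adjoint identification: verifying that $\Gr(T^*)=R(\Gr(T)^\perp)$ holds as an equality of closed subspaces (using density of $\Dom(T_z)$ so that $T_z^*$ is a genuine operator and its graph is all of $R(\Gr(T_z)^\perp)$), and confirming that the constant operator $R$ indeed carries complemented subspaces to complemented subspaces and preserves the notion of complex differentiability. One should also note that Prop. \ref{stwi7} needs the hypothesis that the subspaces stay complemented, which is guaranteed here precisely by the assumption that $T_z\in\cC_\com$; the transformation $R$ together with the annihilator respects this, as recorded above.
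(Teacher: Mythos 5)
Your proof is correct and follows essentially the same route as the paper: identify $\Gr(T^*_z)$ with a constant invertible transformation of $\Gr(T_z)^\perp$, then apply the Schwarz reflection principle for subspaces (Prop. \ref{stwi7}) together with the fact that a constant bounded invertible operator preserves holomorphy of Grassmannian-valued families. The one adjustment: that last fact should be justified by Prop. \ref{stwi9} (as the paper does), not Prop. \ref{kjh1}, which only gives bicontinuity, not preservation of complex differentiability.
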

\proof
It is well known that $\Gr(T_z)^\perp= U\Gr(T_z^*)$, where $U$ is the invertible operator given by $U(x,y)=(-y,x)$ for $(x,y)\in\cH_1^*\oplus\cH_2^*$. Thus, the {equivalence of the holomorphy of $z\mapsto \Gr(T_{\bar z}^*)$ and $z\mapsto \Gr(T_{ z})$} follows from Prop. \ref{stwi7} and  Prop. \ref{stwi9} { applied to the constant bounded invertible operator} {$U$ or $U^{-1}$}.\qeds

 We will make use of the following well-known result:

\begin{theorem}[{\cite[Thm. 5.13]{K}}]\label{thm:ar} Let $T\in\cC(\cH_1,\cH_2)$ be densely defined. Then $\Ran T$ is closed iff $\Ran T^*$ is closed in $\cH_1^*$. In such case,
\beq\label{eq:perpran}
(\Ran T)^{\bot}=\ker \,T^*, \quad (\ker\, T)^\bot=\Ran T^*.
\eeq
\end{theorem}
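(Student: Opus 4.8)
The plan is to separate the two assertions of the statement, since the identities in \eqref{eq:perpran} split into an unconditional part and a part that really uses closedness, whereas the equivalence ``$\Ran T$ closed iff $\Ran T^*$ closed'' is best reduced to a lower bound via the closed range theorem. First I would record what holds with no closedness hypothesis: as $T$ is densely defined, $T^*$ is well defined, and $y\in(\Ran T)^\bot$ means $\langle y,Tx\rangle=0$ for all $x\in\Dom T$, which says precisely that the functional $x\mapsto\langle y,Tx\rangle$ is bounded (it is zero) and that $T^*y=0$; hence $(\Ran T)^\bot=\ker T^*$ unconditionally. The same computation gives $\langle T^*y,x\rangle=\langle y,Tx\rangle=0$ for $x\in\ker T$, so $\Ran T^*\subseteq(\ker T)^\bot$ always, the reverse inclusion being the substantive point.

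Next I would reduce closedness of the range to a lower bound. Since $T$ is closed, $\ker T$ is closed and $T$ descends to an injective closed operator $\hat T$ from $\cH_1/\ker T$ to $\cH_2$ with $\Ran\hat T=\Ran T$ (a short check using closedness of $T$ shows $\hat T$ is closed). By the closed range theorem (Thm~\ref{linv}) applied to $\hat T$, the range $\Ran T$ is closed iff $\hat T$ is bounded below, i.e. iff there is $c>0$ with $\dist(x,\ker T)\le c\,\|Tx\|$ for all $x\in\Dom T$. The identical reduction applied to the closed operator $T^*$ (quotienting $\cH_2^*$ by $\ker T^*$, which needs no density of $T^*$) shows $\Ran T^*$ is closed iff $T^*$ is bounded below modulo $\ker T^*$.

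For the forward implication I would argue directly rather than by symmetry. Assuming $\Ran T$ closed, fix $f\in(\ker T)^\bot$ and define $g$ on $\Ran T$ by $g(Tx):=\langle f,x\rangle$; this is well defined because $f$ annihilates $\ker T$, and it is bounded because $|g(Tx)|=|\langle f,x-k\rangle|\le\|f\|\dist(x,\ker T)\le c\|f\|\,\|Tx\|$ by the lower bound of the previous paragraph. Extending $g$ to $y\in\cH_2^*$ by Hahn--Banach yields $\langle y,Tx\rangle=\langle f,x\rangle$ for all $x\in\Dom T$, i.e. $y\in\Dom T^*$ and $T^*y=f$. Hence $(\ker T)^\bot\subseteq\Ran T^*$, and combined with the unconditional reverse inclusion this gives $\Ran T^*=(\ker T)^\bot$, which is closed; together with $(\Ran T)^\bot=\ker T^*$ this establishes \eqref{eq:perpran}.

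The hard part will be the converse, that $\Ran T^*$ closed forces $\Ran T$ closed. In a reflexive setting one could simply invoke $T^{**}=T$ and rerun the previous paragraph, but for general Banach spaces $T^*$ need not even be norm-densely defined, so this symmetry is unavailable. The main obstacle is therefore to transfer the lower bound from $T^*$ back to $T$: one knows $\Ran T\subseteq{}^\bot(\ker T^*)$ with $\ker T^*=(\Ran T)^\bot$, and must show the reverse inclusion $\emph{}^\bot(\ker T^*)\subseteq\Ran T$, a surjectivity statement that I would prove by a Banach-space open-mapping/successive-approximation argument, estimating $\dist(x,\ker T)$ by testing against the unit ball of $(\ker T)^\bot=\Ran T^*$ and using that this range is closed. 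Once the resulting lower bound for $T$ modulo $\ker T$ is in place, Thm~\ref{linv} closes the argument. Since the statement is exactly \cite[Thm.~5.13]{K}, one may alternatively just cite Kato; the above is only the self-contained route.
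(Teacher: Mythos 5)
The paper itself does not prove this statement at all: it is quoted as a known result with a citation to Kato, so your fallback option (``just cite Kato'') is exactly what the authors do. Judged as a self-contained proof, your forward half is correct and complete: the unconditional identities $(\Ran T)^\bot=\ker T^*$ and $\Ran T^*\subseteq(\ker T)^\bot$, the passage to the injective quotient operator $\hat T$ on $\cH_1/\ker T$ (your ``short check'' of its closedness does work, by lifting a convergent sequence of cosets to a convergent sequence in $\cH_1$ and using closedness of $T$), the lower bound $\dist(x,\ker T)\le c\,\|Tx\|$ from Thm \ref{linv}, and the Hahn--Banach extension producing $y\in\Dom T^*$ with $T^*y=f$ for any $f\in(\ker T)^\bot$ together give both identities in (\ref{eq:perpran}) and the closedness of $\Ran T^*$.

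The converse ($\Ran T^*$ closed $\Rightarrow$ $\Ran T$ closed) is, as you say, the hard part, but your sketch of it is circular. You propose to estimate $\dist(x,\ker T)$ by testing against the unit ball of ``$(\ker T)^\bot=\Ran T^*$''. Under the hypothesis of this direction you only know the inclusion $\Ran T^*\subseteq(\ker T)^\bot$; the equality is one of the equivalent formulations of the theorem, and in all standard proofs it is deduced \emph{after} one knows $\Ran T$ is closed (via your forward argument). The gap is not cosmetic: a norm-closed subspace of a dual space need not be weak-$*$ closed, while $(\ker T)^\bot$ is automatically weak-$*$ closed, so ``$\Ran T^*$ norm closed'' does not by itself provide a norming family for $\dist(\cdot,\ker T)$. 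The genuine argument (Banach/Kato) runs differently: replace $\cH_2$ by $(\Ran T)^{\rm cl}$, so that the relevant adjoint becomes injective (equivalently, bounded below modulo $\ker T^*$); then use Hahn--Banach separation to show that the closure of $T(B)$, with $B$ the unit ball of $\Dom T$ in the graph norm, contains a ball of $(\Ran T)^{\rm cl}$; then remove the closure by successive approximation, using completeness of $\Dom T$ in the graph norm. Only after this open-mapping step does one get the lower bound for $\hat T$ and hence closedness of $\Ran T$. So either supply that argument or keep the citation; as written, the second half of your proof does not stand on its own.
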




\bep\label{stwi8}
 Let $\Theta \ni z\mapsto S_z\in \cC_\com(\cH_1,\cH_2)$ be holomorphic. Assume that ${\rm Dom}(S_z)$ is dense and $\Ran S_z=\cH_2$. Then
\beq \Theta\ni z\mapsto \Ker S_{ z}\in\Grass(\cH_1)\label{stwi3}\eeq
is holomorphic.
\eep
\proof
{ 
Since $z\mapsto S_z$ is holomorphic, so is $z\mapsto S_{\bar z}^*$. Let $z\mapsto W_z$  be a resolution of holomorphy of $z\mapsto S_{\bar z}^*$. 
 By (\ref{eq:perpran}), $\Ker S_{\bar z}^*=(\Ran S_{\bar z})^\perp=\{0\}$. It follows that $z\mapsto S_{\bar z}^*W_z$ is a holomorphic injective resolution of 
\beq
z\mapsto \Ran S_{\bar z}^*\in\Grass(\cH_1^*).\label{eq:Sadj}
\eeq
Hence (\ref{eq:Sadj}) is holomorphic. But $(\Ker S_z)^\perp=\Ran S_{\bar z}^*$.
Hence $z\mapsto \Ker S_z$ is holomorphic by the Schwarz reflection principle for subspaces (Prop. \ref{stwi7}).
\qed}

\section{Hilbert space theory}\label{sec4}

Throughout this section 
 $\cH,\cH_1,\cH_2$ are Hilbert spaces.
Note that $\cH^*$ can be identified with $\cH$ itself, the annihilator can be identified with the orthogonal complement and the adjoint with the Hermitian adjoint.

\subsection{Projectors}
\begin{definition}
We will use the term {\em projector} as the synonym for {\em orthogonal projection}. We will write $P_\cX$ for the projector onto $\cX$.
\end{definition}
 Thus, $P_\cX=P_{\cX,\cX^\perp}$. 

Let $\cX,\cY$ be subspaces of $\cH$.
Then $\cX\oplus\cY^\perp=\cH$ is equivalent to  $\cX^\perp\oplus\cY=\cH$, which  is equivalent to 
  $\|P_\cX-P_\cY\|<1$.

The gap topology on the Grassmannian of a Hilbert space simplifies
considerably. In particular, the gap function is a metric and has a
convenient expression in terms of the projectors:
\[\hat\delta(\cX_1,\cX_2):=\|P_{\cX_1}-P_{\cX_2}\|,\ \ \cX_1,\cX_2\in\Grass(\cH).\]

Thus a function
\beq \Theta\ni z\mapsto \cX_z\in\Grass(\cH)\label{func4}\eeq
is continuous iff $z\mapsto
P_{\cX_z}$ is continuous. Unfortunately, the analogous statement is
not true for the holomorphy, and we have to use the criteria discussed
in the section on Banach spaces. This is however simplified by the fact that in a Hilbert space each closed subspace is complemented, so that $\Grass(\cH)=\Grass_\com(\cH)$ and $\cC(\cH_1,\cH_2)=\cC_\com(\cH_1,\cH_2)$.

\subsection{Characteristic matrix}

\begin{definition}
The {\em characteristic matrix} of a closed operator $T\in\cC(\cH_1,\cH_2)$  is defined as the projector onto $\Gr(T)$ and denoted $M_T$.\end{definition}
Assume that $T$ is densely defined. We set
\[\langle T\rangle:=(\one +T^*T)^{\frac12}.\]
It is easy to see that
\[J_T:=
\left[\begin{array}{c}\langle T\rangle^{-1}\\
T\langle T\rangle^{-1}\end{array}\right]:\cH_1\to\cH_1\oplus\cH_2\]
 is a partial isometry onto $\Gr(T)$. Therefore, by (\ref{pq2}) $M_T=J_TJ_T^*.$
To obtain a more explicit formula for the characteristic matrix, note the identities
\begin{eqnarray*}
T(\one +T^*T)^{-1}&=&\big((\one +TT^*)^{-1}T\big)^{\rm cl},\\
TT^*(\one +TT^*)^{-1}&=&\big(T(\one +T^*T)^{-1}T^*\big)^{\rm
  cl}\ =\ \big((\one +TT^*)^{-1}TT^*\big)^{\rm cl}.
\end{eqnarray*}
Note that the above formulas involve products of unbounded operators.
We use the standard definition of the product of unbounded
operators recalled in Def. \ref{prodi}.

In the following formula for the
 characteristic matrix we are less pedantic and we omit the superscript
 denoting the closure: 
\begin{equation}
M_T:=\left[\begin{array}{cc}\langle T\rangle^{-2}&\langle T\rangle^{-2}T^*\\
T\langle T\rangle^{-2}&T\langle T\rangle^{-2}T^*\end{array}\right],
\label{qeq2}\end{equation}

Consider a function 
\begin{equation}
\Theta\ni z\mapsto T_z\in\cC(\cH_1,\cH_1).\label{func-}\end{equation}

\begin{proposition}
Let $z_0\in\Theta$. The function 
(\ref{func-}) is continuous in the gap topology at $z_0$ iff the functions
\begin{eqnarray}
\Theta\ni z&\mapsto &  \langle T_z\rangle^{-2}\in\cB(\cH_1,\cH_1),\label{kas1}\\
\Theta\ni z&\mapsto &T_z\langle T_z\rangle^{-2}\in\cB(\cH_1,\cH_2)\label{kas2},\\
\Theta\ni z&\mapsto &  \langle T_z^*\rangle^{-2}\in\cB(\cH_2,\cH_2)\label{kas3}
\end{eqnarray}
are continuous at $z_0$. \end{proposition}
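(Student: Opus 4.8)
The plan is to express the gap-continuity of (\ref{func-}) at $z_0$ in terms of the continuity of the characteristic matrix $M_{T_z}=P_{\Gr(T_z)}$, and then to match the four scalar blocks of $M_{T_z}$ against the three functions (\ref{kas1})--(\ref{kas3}). Since we are working in Hilbert spaces, $\Grass(\cH)=\Grass_\com(\cH)$ and the gap function is a genuine metric equal to $\hat\delta(\cX_1,\cX_2)=\|P_{\cX_1}-P_{\cX_2}\|$. Transporting this to closed operators via the graph map (\ref{func0}), gap-continuity of $z\mapsto T_z$ at $z_0$ is by definition continuity of $z\mapsto\Gr(T_z)$, which (by the Hilbert-space remark following (\ref{func4})) is equivalent to norm-continuity of $z\mapsto P_{\Gr(T_z)}=M_{T_z}$ at $z_0$. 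So the whole proposition reduces to: $z\mapsto M_{T_z}$ is continuous at $z_0$ if and only if (\ref{kas1})--(\ref{kas3}) are continuous at $z_0$.

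First I would read off the block structure of $M_{T}$ from formula (\ref{qeq2}). Continuity of the operator $M_{T_z}\in\cB(\cH_1\oplus\cH_2)$ is equivalent to continuity of each of its four matrix entries as maps into the appropriate $\cB(\cdot,\cdot)$. The $(1,1)$ entry is exactly (\ref{kas1}), namely $\langle T_z\rangle^{-2}$, and the $(2,1)$ entry is exactly (\ref{kas2}), namely $T_z\langle T_z\rangle^{-2}$. The direction ``(\ref{kas1})--(\ref{kas3}) continuous $\Rightarrow M_{T_z}$ continuous'' is then the easy half: the $(1,2)$ entry $\langle T_z\rangle^{-2}T_z^*$ is the adjoint of the $(2,1)$ entry, so its continuity follows from that of (\ref{kas2}) by taking adjoints (adjunction is a norm-isometry on $\cB$); and the $(2,2)$ entry is $T_z\langle T_z\rangle^{-2}T_z^*=\langle T_z^*\rangle^{-2}T_z^*$ in closed form, which is $\one-\langle T_z^*\rangle^{-2}$ after using the symmetric identity $TT^*(\one+TT^*)^{-1}=\one-(\one+TT^*)^{-1}$; hence its continuity is equivalent to that of (\ref{kas3}). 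Collecting these, continuity of the three listed functions forces continuity of all four blocks, hence of $M_{T_z}$.

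For the converse, continuity of $M_{T_z}$ gives continuity of every block. The $(1,1)$ block is (\ref{kas1}) and the $(2,1)$ block is (\ref{kas2}) directly. For (\ref{kas3}) I would use the $(2,2)$ block: since $M_{T_z}$ is the projector onto $\Gr(T_z)$, the decomposition of $\cH_1\oplus\cH_2$ is symmetric in the two factors, and the $(2,2)$ block equals $T_z\langle T_z\rangle^{-2}T_z^*$, which by the identity above equals $\one-\langle T_z^*\rangle^{-2}$; thus continuity of the $(2,2)$ block is equivalent to continuity of (\ref{kas3}). This closes the equivalence.

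The main obstacle I anticipate is purely the bookkeeping of the unbounded-operator identities that make the off-diagonal and lower-diagonal blocks of (\ref{qeq2}) rigorous — in particular verifying that $T_z\langle T_z\rangle^{-2}T_z^*$ genuinely equals $\one-\langle T_z^*\rangle^{-2}$ as an everywhere-defined bounded operator, and that the closures suppressed in (\ref{qeq2}) behave as indicated (these are exactly the identities $T(\one+T^*T)^{-1}=((\one+TT^*)^{-1}T)^{\rm cl}$ recorded before the formula). Once these closed-form expressions for the blocks are in hand, the argument is a routine ``a matrix of operators is norm-continuous iff each entry is'' together with the isometry of adjunction, and no quantitative gap estimate beyond the identity $\hat\delta=\|P_{\cX_1}-P_{\cX_2}\|$ is needed.
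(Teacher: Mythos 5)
Your proof is correct and takes essentially the same route as the paper: both reduce gap-continuity to norm-continuity of the characteristic matrix $M_{T_z}$ via $\hat\delta=\|P_{\cX_1}-P_{\cX_2}\|$, and then identify the four blocks of (\ref{qeq2}) with (\ref{kas1}), (\ref{kas2}) (twice, via self-adjointness of the projector), and $\one-\langle T_z^*\rangle^{-2}$ respectively. (One small typo: your intermediate expression $\langle T_z^*\rangle^{-2}T_z^*$ for the $(2,2)$ block should be $\langle T_z^*\rangle^{-2}T_zT_z^*$, but the identity you actually invoke, $T_zT_z^*(\one+T_zT_z^*)^{-1}=\one-(\one+T_zT_z^*)^{-1}$, is the correct one and is exactly what the paper uses.)
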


\proof Clearly,  (\ref{func1}) is continuous in the gap topology iff
$\Theta\ni z\mapsto M_{T_z}$ is. 
Now
(\ref{kas1}), (\ref{kas2}) resp. (\ref{kas3})
are $\big(M_{T_z}\big)_{11}$,
 $\big(M_{T_z}\big)_{12}=\big(M_{T_z}\big)_{21}^*$, resp.
 $\one-\big(M_{T_z}\big)_{22}$. \qeds

The gap topology is not the only topology on $\cC(\cH_1,\cH_2)$ that on $\cB(\cH_1,\cH_2)$ coincides with the usual norm topology.  Here is one of the examples considered in the literature:

\begin{definition}
We say that 
(\ref{func1}) is continuous at $z_0$  in the {\em  Riesz topology}
\footnote{Note that this name is not used in older references.}
 if \[\Theta\ni z\mapsto T_{z}\langle T_{z}\rangle^{-1}\in\cB(\cH_1,\cH_2)\]
is continuous at $z_0$.
\end{definition}

It is easy to see that
 the Riesz topology is strictly stronger than the gap topology
 \cite{kaufman}. Indeed, the fact that it is stronger is obvious. Its non-equivalence with the gap topology on self-adjoint operators follows from the following easy fact \cite{nicolaescu}:

\begin{theorem} Suppose that the values of (\ref{func1}) are self-adjoint.
\begin{enumerate}\item
 The following are equivalent:
\begin{romanenumerate}
\item $\lim\limits_{z\to z_0}T_z= T_{z_0}$ in the gap topology;
\item $\lim\limits_{z\to z_0}f(T_z)= f(T_{z_0})$ for all bounded continuous $f:\R\to\C$ such that $\lim\limits_{t\to-\infty}f(t)$ and $\lim\limits_{t\to+\infty}f(t)$ exist and are equal.
\end{romanenumerate}
\item The following are equivalent:
\begin{romanenumerate}
\item $\lim\limits_{z\to z_0}T_z= T_{z_0}$ in the Riesz topology;
\item $\lim\limits_{z\to z_0}f(T_z)= f(T_{z_0})$ for all bounded continuous $f:\R\to\C$ such that $\lim\limits_{t\to-\infty}f(t)$ and $\lim\limits_{t\to+\infty}f(t)$ exist.
\end{romanenumerate}
\end{enumerate}
\end{theorem}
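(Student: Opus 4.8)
The plan is to translate both topologies into the language of the functional calculus by isolating, for each, a small family of ``seed'' functions of $T_z$ whose norm convergence is equivalent to convergence in that topology, and then to invoke the Stone--Weierstrass theorem to show that these seeds generate precisely the space of functions $f$ appearing in the corresponding condition (ii). For self-adjoint $T_z$ one has $T_z^*=T_z$, so the entries of the characteristic matrix are $h_1(T_z)$, $h_2(T_z)$, $h_3(T_z)$ with
\[ h_1(t)=\frac1{1+t^2},\qquad h_2(t)=\frac{t}{1+t^2},\qquad h_3(t)=1-h_1(t), \]
whereas the Riesz transform $T_z\langle T_z\rangle^{-1}$ equals $g(T_z)$ with $g(t)=t(1+t^2)^{-1/2}$. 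By the proposition characterizing gap continuity through the entries of the characteristic matrix, gap convergence $T_z\to T_{z_0}$ is equivalent to the norm convergence of $h_1(T_z)$ and $h_2(T_z)$ (the entry $h_3=1-h_1$ then following), while Riesz convergence is by definition the norm convergence of $g(T_z)$.

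First I would introduce the set
\[ \cA:=\bigl\{f\in C_b(\R):\ \lim_{z\to z_0}f(T_z)=f(T_{z_0})\ \text{in norm}\bigr\} \]
and verify that it is a closed, conjugation-invariant subalgebra of $C_b(\R)$ containing the constants. Indeed, since for each fixed $z$ the map $f\mapsto f(T_z)$ is a contractive $*$-homomorphism, $\cA$ is closed under sums and products (products of norm-convergent, locally bounded operator families converge) and under conjugation (because $\overline f(T_z)=f(T_z)^*$ and $A\mapsto A^*$ is isometric); it is uniformly closed by the estimate $\|f(T_z)-\tilde f(T_z)\|\le\|f-\tilde f\|_\infty$ and a routine $\eps/3$ argument. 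Consequently each implication (i)$\Rightarrow$(ii) reduces to showing that the relevant seed functions generate, inside $\cA$, the whole function space of (ii); the reverse implications (ii)$\Rightarrow$(i) are immediate, as the seeds $h_1,h_2$, respectively $g$, themselves lie in that space.

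For part (1) the functions $h_1,h_2$ extend continuously to the one-point compactification $\R\cup\{\infty\}$, both tending to $0$ at $\pm\infty$; the continuous functions on $\R\cup\{\infty\}$ are exactly those with equal finite limits at $\pm\infty$, i.e. the functions of (1)(ii). Since $h_1>0$ separates $\infty$ from every finite point and $h_2$ separates $t$ from $-t$, the seeds separate the points of $\R\cup\{\infty\}$, so Stone--Weierstrass gives $\cA\supseteq C(\R\cup\{\infty\})$. For part (2), $g$ is a homeomorphism of the two-point compactification $[-\infty,+\infty]$ onto $[-1,1]$ with $g(\pm\infty)=\pm1$, hence separates all points including the two infinities; Stone--Weierstrass then yields $\cA\supseteq C([-\infty,+\infty])$, which is precisely the space of (2)(ii). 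In transferring the uniform approximation of $f$ by polynomials in the seeds down to the operator level one uses the composition identity $p(g)(T_z)=p(g(T_z))$, valid for polynomials $p$ by multiplicativity of the functional calculus, together with $\|f(T_z)-p(g(T_z))\|\le\|f-p\circ g\|_\infty$.

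The substance of the argument is entirely in the Stone--Weierstrass bookkeeping: checking that the seeds separate the points of the correct compactification. This is exactly where the two topologies part ways --- $g$ distinguishes $+\infty$ from $-\infty$ whereas $h_1,h_2$ do not --- which is the structural reason why the gap topology admits only functions with \emph{equal} limits at $\pm\infty$ while the Riesz topology admits arbitrary (possibly unequal) limits. I expect the only mildly delicate point to be the verification that $\cA$ is uniformly closed and multiplicative uniformly in $z$ near $z_0$, which rests on the local boundedness $\|f(T_z)\|\le\|f\|_\infty$ inherent to the functional calculus.
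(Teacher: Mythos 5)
Your proof is correct, but there is nothing in the paper to compare it against: the authors state this theorem without proof, as an ``easy fact'' taken from \cite{nicolaescu} (cf.\ also \cite{kaufman}), so your argument supplies what the paper delegates to the literature. The two reductions you start from are exactly right and are available inside the paper: for self-adjoint $T_z$ the criterion via the entries of the characteristic matrix (the proposition containing (\ref{kas1})--(\ref{kas3})) shows that gap convergence is precisely norm convergence of $h_1(T_z)$ and $h_2(T_z)$, since the third function $\langle T_z^*\rangle^{-2}$ coincides with the first and the $(2,2)$ entry is $\one-h_1(T_z)$; Riesz convergence is, by the paper's definition, norm convergence of $g(T_z)$. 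Your verification that $\cA$ is a uniformly closed, conjugation-stable, unital subalgebra of $C_b(\R)$ is routine but necessary, and the product step indeed rests on the bound $\|f(T_z)\|\leq\|f\|_\infty$ from the functional calculus. The Stone--Weierstrass bookkeeping is also sound: $h_1$ separates $\infty$ from every finite point, and if $h_1(s)=h_1(t)$ with $s\neq t$ then $s=-t\neq 0$, so $h_2$ completes the separation of $\R\cup\{\infty\}$; meanwhile $g$ is a homeomorphism of $[-\infty,+\infty]$ onto $[-1,1]$, so it separates the two infinities. Since the closed algebra generated by the seeds is contained in $\cA$, both implications (i)$\Rightarrow$(ii) follow, and the converses are trivial because the seeds belong to the respective function classes. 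Your closing remark is also the right structural point, and is exactly the use the paper makes of this theorem: $g$ does not extend to the one-point compactification, so the Riesz topology is strictly finer than the gap topology on unbounded self-adjoint operators.
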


\subsection{Relative characteristic matrix}

Let $T$ and $S$ be densely defined closed operators. 
\begin{theorem}\label{qeq4} We have
\[\Gr(T)\oplus\Gr(S)^\perp=\cH_1\oplus\cH_2\] iff $J_S^*J_T$
is invertible. Then the projection onto $\Gr (T)$ along $\Gr (S)^\perp$ is  given by
\begin{equation}\label{eq:chmax}
J_T(J_S^*J_T)^{-1}J_S^*\end{equation}
\end{theorem}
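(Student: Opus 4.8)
The plan is to apply the abstract projection formula of Proposition \ref{pwo1} with the specific choices $J=J_T$ and $I=J_S^*$, and then to identify the resulting projection with (\ref{eq:chmax}). First I would recall the setup from the characteristic matrix subsection: for a densely defined closed operator $T$, the operator
\[
J_T=\left[\begin{array}{c}\langle T\rangle^{-1}\\ T\langle T\rangle^{-1}\end{array}\right]:\cH_1\to\cH_1\oplus\cH_2
\]
is a partial isometry with $\Ran J_T=\Gr(T)$, and likewise $J_S:\cH_1\to\cH_1\oplus\cH_2$ is a partial isometry onto $\Gr(S)$. Since $J_T$ is an isometry onto the closed subspace $\Gr(T)$, it is bounded, injective and has closed range, hence $J_T\in\cB_\linv(\cH_1,\cH_1\oplus\cH_2)$. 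Its Hilbert-space adjoint $J_S^*\in\cB(\cH_1\oplus\cH_2,\cH_1)$ is surjective (being the adjoint of an operator with closed range), and because $J_S$ is a partial isometry onto $\Gr(S)$, we have $J_SJ_S^*=P_{\Gr(S)}$, so $\Ker J_S^*=\Gr(S)^\perp$.

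With these identifications, Proposition \ref{pwo1} applies directly: $J_S^*J_T$ is invertible if and only if $\Ran J_T\oplus\Ker J_S^*=\cH_1\oplus\cH_2$, which is exactly the decomposition $\Gr(T)\oplus\Gr(S)^\perp=\cH_1\oplus\cH_2$ claimed in the theorem. This proves the equivalence. For the formula, I would then invoke Proposition \ref{pwo} (its conclusion (\ref{pq2})), which in the present notation gives
\[
P_{\Ran J_T,\Ker J_S^*}=J_T(J_S^*J_T)^{-1}J_S^*.
\]
Substituting $\Ran J_T=\Gr(T)$ and $\Ker J_S^*=\Gr(S)^\perp$ yields precisely (\ref{eq:chmax}) as the projection onto $\Gr(T)$ along $\Gr(S)^\perp$.

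The only points requiring a little care — and where I expect the main (though modest) obstacle to lie — are the two boundedness/closedness facts about $J_S^*$: that it is surjective onto $\cH_1$ and that its kernel is exactly $\Gr(S)^\perp$. Both follow from $J_S$ being a partial isometry with range $\Gr(S)$, since then $J_S^*$ restricted to $\Gr(S)$ is the inverse isometry and annihilates $\Gr(S)^\perp$; in particular $\Ran J_S^*=\cH_1$ and $\Ker J_S^*=\Gr(S)^\perp$. Once these are in place, the theorem is an immediate specialization of the general Banach-space projection results already established, with no further computation needed.
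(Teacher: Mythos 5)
Your proof is correct and follows essentially the same route as the paper: the paper's own argument consists precisely of noting that $J_T\in\cB_\linv(\cH_1,\cH_1\oplus\cH_2)$ with $\Ran J_T=\Gr(T)$, that $J_S^*\in\cB(\cH_1\oplus\cH_2,\cH_1)$ is surjective with $\Ker J_S^*=\Gr(S)^\perp$, and then applying Prop.~\ref{pwo1} followed by Prop.~\ref{pwo}. You additionally spell out the partial-isometry facts justifying the surjectivity of $J_S^*$ and the identification of its kernel, which the paper leaves implicit.
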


\proof We have $J_T\in \cB_\inv(\cH_1,\cH_1\oplus\cH_2)$. $J_S^*\in\cB(\cH_1\oplus\cH_2,\cH_1)$ is surjective. Besides, $\Ran J_T=\Gr(T)$ and $\Ker J_S^*=\Gr(S)^\perp$. Therefore, it suffices to apply first Prop. \ref{pwo1}, and then Prop. \ref{pwo}. \qed

\begin{definition} (\ref{eq:chmax}) will be called the {\em relative characteristic matrix of $T,S$} and will be denoted $M_{T,S}$.
\end{definition}

Clearly, $M_T=M_{T,T}$.

We can formally write
\begin{equation}
J_S^*J_T=\langle S\rangle^{-1}(\one +S^*T)\langle T\rangle^{-1}.
\label{qeq5}\end{equation}
To make  (\ref{qeq5}) rigorous we interpret
 $(\one +S^*T)$  as a bounded operator from 
$\langle T\rangle^{-1}\cH_1$ to $\langle S \rangle\cH_1$.
Now the inverse of  (\ref{qeq5}) is a bounded operator, which can be formally written as
\[(J_S^*J_T)^{-1}=\langle T \rangle(\one +S^*T)^{-1}\langle S\rangle.\]

Thus we can write
\begin{eqnarray}\nonumber M_{T,S}&=&
\left[\begin{array}{c}\langle T\rangle^{-1}\\
T\langle T\rangle^{-1}\end{array}\right]
\langle T \rangle(\one +S^*T)^{-1}\langle S\rangle
\left[\begin{array}{cc}\langle S\rangle^{-1}&\langle S\rangle^{-1}S^*
\end{array}\right]\\[3ex]
&=&
\left[\begin{array}{cc}(\one +S^*T)^{-1}&(\one +S^*T)^{-1}S^*\\
T(\one +S^*T)^{-1}&T(\one +S^*T)^{-1}S^*\end{array}\right]\label{qe3}\\
&=&
\left[\begin{array}{cc}(\one +S^*T)^{-1}&S^*(\one +TS^*)^{-1}\\
T(\one +S^*T)^{-1}&TS^*(\one +TS^*)^{-1}\end{array}\right].
\label{qeq3a}\end{eqnarray}
Note that even though the entries of (\ref{qe3}) and (\ref{qeq3a}) are expressed in terms of unbounded operators, all of them can be interpreted as bounded everywhere defined operators (eg. by taking the closure of the corresponding expression).

\subsection{Holomorphic families of closed operators}

In order to check the holomorphy of a function 
\begin{equation} \Theta\ni z\mapsto
  T_z\in\cC(\cH_1,\cH_2)\label{func1f}\end{equation} using
 the criterion given in Prop. \ref{func7} one needs to find a
 relatively arbitrary function $z\mapsto W_z$. 
In the case of a Hilbert space we have a criterion for the complex
differentiability involving  relative  characteristic
matrices. We believe that this criterion should be often more
convenient, since it involves a function with values in bounded
operators uniquely  defined for any $z_0\in\Theta$.

\begin{proposition}
Let $z_0\in\Theta$ and assume (\ref{func1f}) has values in densely defined operators. Then (\ref{func1f}) is complex differentiable  at $z_0$ if there exists an open 
$\Theta_0$ such that $z_0\in\Theta_0\subset\Theta$, and for $z\in\Theta_0$,
\[\langle T_{z_0}\rangle^{-1}(\one+T_{z_0}^*T_z)\langle T_z\rangle^{-1}\]
is invertible, so that we can define
\beq
\left[\begin{array}{cc}(\one+T_{z_0}^*T_z)^{-1}&T_{z_0}^*(\one+T_z T_{z_0}^*)^{-1}\\
T_z(\one+T_{z_0}^*T_z)^{-1}&T_z T_{z_0}^*(\one+T_z T_{z_0}^*)^{-1}\end{array}\right]\in\cB(\cH_1\oplus\cH_2),
\label{qeq3d}\eeq
and (\ref{qeq3d}) is complex differentiable at $z_0$.
\label{qeq9}\end{proposition}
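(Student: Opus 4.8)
{\bf Proof plan.} The strategy is to recognize the matrix (\ref{qeq3d}) as the relative characteristic matrix $M_{T_z, T_{z_0}}$, i.e. the projection onto $\Gr(T_z)$ along $\Gr(T_{z_0})^\perp$, and then to invoke the characterization of complex differentiability of Grassmannian-valued families via projections (Prop. \ref{stw6}). Concretely, I would first observe that the invertibility of $\langle T_{z_0}\rangle^{-1}(\one + T_{z_0}^* T_z)\langle T_z\rangle^{-1} = J_{T_{z_0}}^* J_{T_z}$ is, by Thm \ref{qeq4}, exactly the condition that $\Gr(T_z)$ and $\Gr(T_{z_0})^\perp$ are complementary in $\cH_1\oplus\cH_2$, and that under this condition (\ref{qeq3d}) equals $M_{T_z,T_{z_0}} = P_{\Gr(T_z),\,\Gr(T_{z_0})^\perp}$ by the formula (\ref{qeq3a}).

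Next I would set $\cY := \Gr(T_{z_0})^\perp$, a fixed closed subspace, and note that by hypothesis $\cY$ is complementary to $\cX_z := \Gr(T_z)$ for all $z\in\Theta_0$. Since we are in a Hilbert space, every closed subspace is complemented, so $\cX_{z_0}\in\Grass_\com(\cH_1\oplus\cH_2)$ and the hypothesis of Prop. \ref{stw6} is satisfied. The assumption that (\ref{qeq3d}) is complex differentiable at $z_0$ is precisely the statement that $\Theta_0\ni z\mapsto P_{\cX_z,\cY}$ is complex differentiable at $z_0$. Thus condition (2) of Prop. \ref{stw6} holds, and the implication (2)$\Rightarrow$(1) gives that $z\mapsto \Gr(T_z)$ is complex differentiable at $z_0$, which by Def. \ref{def-holo-op} is exactly the complex differentiability of (\ref{func1f}) at $z_0$.

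The one genuinely substantive point — and the step I would expect to require care — is verifying that the matrix written in (\ref{qeq3d}) does coincide with the relative characteristic matrix $M_{T_z,T_{z_0}}$, so that its complex differentiability is the differentiability of the projection $P_{\cX_z,\cY}$. This amounts to matching (\ref{qeq3d}) against the expression (\ref{qeq3a}) with $T = T_z$ and $S = T_{z_0}$, and in particular interpreting each formally unbounded entry (e.g. $T_z(\one + T_{z_0}^* T_z)^{-1}$ and $T_z T_{z_0}^*(\one + T_z T_{z_0}^*)^{-1}$) as its bounded everywhere-defined closure, as noted after (\ref{qeq3a}). Once this identification is in place the result is immediate from Prop. \ref{stw6}; everything else is bookkeeping. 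Note that, as in Prop. \ref{stw6}, this gives only a sufficient condition, since the choice $\cY = \Gr(T_{z_0})^\perp$ is one particular complementary subspace and need not remain complementary to $\Gr(T_z)$ throughout $\Theta_0$ unless the invertibility hypothesis is imposed.
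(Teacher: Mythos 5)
Your proof is correct and is precisely the argument the paper intends (the paper states Prop.~\ref{qeq9} without an explicit proof, since it is meant to follow directly from the machinery just developed): recognize the invertibility hypothesis as the invertibility of $J_{T_{z_0}}^*J_{T_z}$, so that by Thm.~\ref{qeq4} and formula (\ref{qeq3a}) the matrix (\ref{qeq3d}) is the relative characteristic matrix $M_{T_z,T_{z_0}}=P_{\Gr(T_z),\,\Gr(T_{z_0})^\perp}$, and then apply Prop.~\ref{stw6} with $\cY=\Gr(T_{z_0})^\perp$, using that in a Hilbert space every closed subspace is complemented. Your closing caveats (the need to read the unbounded entries as their bounded closures, and that the statement is only a sufficient condition as formulated) are consistent with the paper's own remarks.
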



\section{Products and sums of operator--valued holomorphic functions}\label{sec5}

In this section we focus on the question what conditions
ensure that the product and the sum of two holomorphic families of closed
operators is
  holomorphic. Note that analogous statements hold for
 families continuous in the gap topology.

Throughout the section
 $\cH_1,\cH_2,\cH$ are Banach spaces.
  
\subsection{Products of closed operators I}\label{ss:products}

 If both { $A,B$ are  closed operators, then the
 product $AB$ (see Def. \ref{prodi})
 does not need to be closed.} 
 We recall below standard criteria for this to be true.
For a more detailed discussion and other sufficient
 conditions we refer the reader to \cite{azizov} and references
 therein. 

\begin{proposition}\label{prop:criterionAB1} Let
\begin{enumerate}\item $A\in\cC(\cH,\cH_2)$ and $B\in\cB(\cH_1,\cH)$, or
\item $A\in\cC_\inv(\cH,\cH_2)$ and $B\in\cC(\cH_1,\cH)$.
\end{enumerate}
Then $AB$ is closed.
\end{proposition}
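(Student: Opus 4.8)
The plan is to prove closedness directly from the graph (sequential) characterization: an operator $T$ is closed iff, whenever $v_n\in\Dom T$ with $v_n\to v$ and $Tv_n\to w$, one has $v\in\Dom T$ and $Tv=w$. Throughout I recall from Def. \ref{prodi} that $\Dom(AB)=\{v\in\Dom B:\ Bv\in\Dom A\}$ and $(AB)v=A(Bv)$, so membership in $\Dom(AB)$ always entails $Bv\in\Dom A$.

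For case (1), I would take $v_n\in\Dom(AB)$ with $v_n\to v$ in $\cH_1$ and $ABv_n\to w$ in $\cH_2$. Since $B\in\cB(\cH_1,\cH)$ is bounded and everywhere defined, $Bv_n\to Bv$. Each $Bv_n$ lies in $\Dom A$ and $A(Bv_n)=ABv_n\to w$, so closedness of $A$ gives $Bv\in\Dom A$ and $A(Bv)=w$; hence $v\in\Dom(AB)$ and $ABv=w$. This case is essentially immediate, the only input being continuity of $B$.

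For case (2), I would exploit that $A\in\cC_\inv(\cH,\cH_2)$ means (by Def. \ref{def-inv} and Thm. \ref{linv}) that $A^{-1}=\tilde A^{-1}\in\cB(\cH_2,\cH)$ is bounded and everywhere defined, with $\Ran(A^{-1})=\Dom A$, $A^{-1}A=\one_{\Dom A}$ and $AA^{-1}=\one_{\cH_2}$. Again take $v_n\in\Dom(AB)$, $v_n\to v$, $ABv_n\to w$. Applying the bounded operator $A^{-1}$ and using $Bv_n\in\Dom A$ gives $Bv_n=A^{-1}(ABv_n)\to A^{-1}w$. Now $v_n\to v$ together with $Bv_n\to A^{-1}w$ and closedness of $B$ yield $v\in\Dom B$ and $Bv=A^{-1}w$. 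Since $A^{-1}w\in\Ran(A^{-1})=\Dom A$, we get $Bv\in\Dom A$, so $v\in\Dom(AB)$, and finally $ABv=A(Bv)=AA^{-1}w=w$.

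The steps are all elementary, but the one requiring care — the genuine content of case (2) — is transporting the convergence $ABv_n\to w$ back to convergence of $Bv_n$ \emph{before} invoking closedness of $B$: this is exactly where boundedness of $A^{-1}$ enters, and one must separately verify the domain condition $Bv\in\Dom A$, which holds precisely because $\Ran(A^{-1})=\Dom A$. Everything else is routine bookkeeping with the product domain.
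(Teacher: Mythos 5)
Your proof is correct, and both cases are handled with the right amount of care: in case (1) the boundedness and everywhere-definedness of $B$ give $Bv_n\to Bv$ so that closedness of $A$ finishes the argument, and in case (2) you correctly use the bounded, everywhere defined inverse $A^{-1}\in\cB(\cH_2,\cH)$ to convert $ABv_n\to w$ into $Bv_n\to A^{-1}w$ \emph{before} invoking closedness of $B$, and you verify the needed domain condition $Bv\in\Dom A$ via $\Ran A^{-1}=\Dom A$. For comparison: the paper itself offers no proof of this proposition --- it is recalled as a standard criterion with a reference to the literature (Azizov--Dijksma) --- so your sequential graph argument supplies exactly the routine verification the paper omits, and it is the standard way to prove it.
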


The simpliest conditions which imply holomorphy of the product are listed in the proposition below. Unconveniently, they are not quite compatible with the sufficient conditions for the closedness of the product, which has to be assumed separately.

\bep\label{criterion1}  Let
\begin{enumerate}\item $\Theta\ni z\mapsto A_z\in \cB(\cH,\cH_2), B_z \in \cC(\cH_1,\cH)$ be holomorphic, or
 \item $\Theta\ni z\mapsto A_z\in \cC(\cH,\cH_2), B_z \in \cC_{\inv}(\cH_1,\cH)$ be holomorphic.\end{enumerate}
If in addition $A_z B_z\in\cC(\cH_1,\cH_2)$ for all $z\in\Theta$ then $\Theta\ni z\mapsto A_z B_z$ is holomorphic.
\eep
\proof { (1) Let
$z\mapsto V_z$ be a resolution of holomorphy of $z\mapsto B_z$. Then it is also a resolution of holomorphy of $z\mapsto A_zB_z$.

(2) Let $z\mapsto U_z$ be a resolution of holomorphy of $z\mapsto A_z$. By the holomorphic version of Prop. \ref{priu}, $z\mapsto B_z^{-1}\in \cB(\cH,\cH_1)$ is holomorphic. It is obviously injective. Hence
  $B_z^{-1}U_z$ is a resolution of holomorphy of  $z\mapsto A_z B_z$. }
\qed

\subsection{Examples and counterexamples}\label{ss:counterex}
\bed The {\em point spectrum} of $T$ is defined as
\[\sp_\p(T):=
\{z\in\C\ :\ \Ker(A-z\one)\neq\{0\}\}.\]
\eed

\begin{example} 
Let  $T\in \cB(\cH)$. Then $z\mapsto(z\one-T)^{-1}$ is holomorphic on $\C\backslash\sp_\p(T)$. Indeed,  $z \mapsto W_z:=z\one-T$ is injective, holomorphic, $\Ran W_z=\Dom(z\one-T)^{-1}$ and $(z\one-T)^{-1}W_z=\one$.
\label{exa}\end{example}

\begin{example} The above example can be generalized.
Let  $T\in \cC(\cH)$ have a nonempty resolvent set. Then $z\mapsto(z\one-T)^{-1}$ is holomorphic on $\C\backslash\sp_\p(T)$. Indeed, let $z_0\in\rs(T)$. Then $z \mapsto W_z:=(z\one-T)(z_0\one-T)^{-1}$ is injective, holomorphic, $\Ran W_z=\Dom(z\one-T)^{-1}$ and $(z\one-T)^{-1}W_z=(z_0\one-T)^{-1}$.
\label{exa+}\end{example}

\begin{example}
Consider $A_z:=T$ with $T\in\cC(\cH)$ unbounded and $B_z:=z\one\in\cB(\cH)$. 
Then the product $A_z B_z$ is closed for all $z\in\C$, but the function $z\mapsto A_z B_z$ is not complex differentiable at $z=0$ due to the fact that that it yields a bounded operator at $z=0$, but fails to do so in any small neighbourhood (cf. Example 2.1 in \cite[Ch. VII.2]{K}).
\end{example}

Therefore, it is not true that if $A_z$ and $B_z$ are holomorphic and $A_z B_z$ is closed for all $z$, then  $z\mapsto A_z B_z$ is
holomorphic.

The more surprising fact is that even the additional requirement that $A_z B_z$ is bounded does not guarantee the holomorphy, as shows the example below.

\begin{example} Assume that $T\in\cC(\cH)$ has empty spectrum. Note that
this implies that $\sp(T^{-1})=\{0\}$ and $\sp_\p(T^{-1})=\emptyset$.
By Example \ref{exa}, $A_z:=T(Tz-\one)^{-1}=(z\one-T^{-1})^{-1}$ is holomorphic. Obviously, so is $B_z:= z\one $. Moreover, $z\mapsto A_z B_z=\one+(Tz-\one)^{-1}$ has values in bounded operators. However, it is not differentiable at zero, because \[\partial_zA_zB_zv\Big|_{z=0}={\red -}Tv,
\ \ \  v\in\Dom T,\]
 and $T$ is unbounded.
\end{example}




\subsection{Products of closed operators II}\label{ss:products2}

In this subsection, exceptionally, $\cH_1,\cH_2,\cH$ are Hilbert spaces.

We quote below a useful criterion specific to that case. The proof is not difficult and can be found for instance in \cite[Prop 2.35]{DG}.

\begin{proposition}\label{prop:criterionAB2}Let
 $A\in \cB(\cH,\cH_2)$, $B\in \cC(\cH_1,\cH)$. If {\red $\Dom( B)$ and } $\Dom(B^*A^*)$ {\red are}
  dense, then $AB$ is closable, $B^*A^*$ is closed and $(AB)^*=B^*
  A^*$.  
\end{proposition}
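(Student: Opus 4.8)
The plan is to establish the three assertions---that $AB$ is closable, that $B^*A^*$ is closed, and that $(AB)^* = B^*A^*$---by first working with $B^*A^*$, which is the more tractable object since $A^*\in\cB(\cH_2,\cH)$ is bounded and everywhere defined. First I would observe that $B^*A^*$ makes sense as a densely defined operator: by assumption $\Dom(B^*A^*)$ is dense, and $\Dom(AB) = \Dom(B)$ is dense by the first density hypothesis. The cleanest route is to prove $(AB)^* = B^*A^*$ directly, and then to read off the remaining two claims: since $A^*$ is bounded and $B^*$ is closed (as $B$ is closed and densely defined), the product $B^*A^*$ falls under case (1) of Prop. \ref{prop:criterionAB1} applied to the pair $(B^*, A^*)$, which gives that $B^*A^*$ is closed. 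Once $(AB)^* = B^*A^*$ is known, the closedness of $B^*A^*$ forces $AB$ to be closable (any operator whose adjoint is densely defined is closable, and $\Dom((AB)^*) = \Dom(B^*A^*)$ is dense by hypothesis), and moreover $(AB)^{\rm cl} = (AB)^{**} = (B^*A^*)^*$.

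The core computation is the operator identity $(AB)^* = B^*A^*$. For one inclusion, take $u\in\Dom(B^*A^*)$, meaning $A^*u\in\Dom(B^*)$; then for all $v\in\Dom(AB)=\Dom(B)$ one has $\langle u\mid ABv\rangle = \langle A^*u\mid Bv\rangle = \langle B^*A^*u\mid v\rangle$, where the first step uses boundedness of $A$ (so $\langle u\mid Av\rangle = \langle A^*u\mid v\rangle$ with no domain issue) and the second uses $A^*u\in\Dom(B^*)$. This shows $u\in\Dom((AB)^*)$ with $(AB)^*u = B^*A^*u$, giving $B^*A^*\subset (AB)^*$. For the reverse inclusion $(AB)^*\subset B^*A^*$, suppose $u\in\Dom((AB)^*)$, so $\langle u\mid ABv\rangle = \langle (AB)^*u\mid v\rangle$ for all $v\in\Dom(B)$. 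Rewriting the left side as $\langle A^*u\mid Bv\rangle$, this says precisely that the functional $v\mapsto\langle A^*u\mid Bv\rangle$ is bounded on $\Dom(B)$, which is exactly the condition $A^*u\in\Dom(B^*)$ with $B^*A^*u = (AB)^*u$. Hence $u\in\Dom(B^*A^*)$ and the two operators coincide.

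The step I expect to be the main obstacle is the reverse inclusion $(AB)^*\subset B^*A^*$, and in particular making sure that $\Dom(B)$ being dense is genuinely used and sufficient there. The subtlety is that the defining relation for $u\in\Dom((AB)^*)$ holds only for $v$ in $\Dom(AB)=\Dom(B)$, and to conclude $A^*u\in\Dom(B^*)$ one needs exactly that $v\mapsto \langle A^*u\mid Bv\rangle$ extends to a bounded functional on the closure---here the density of $\Dom(B)$ guarantees that the adjoint $B^*$ is well defined and that the boundedness of this functional on the dense domain characterizes membership in $\Dom(B^*)$. I would therefore state explicitly that the density of $\Dom(B)$ is what legitimizes the identification with the definition of $B^*$, while the density of $\Dom(B^*A^*)$ is what is needed (only) to conclude closability of $AB$ via $(AB)^{**}$. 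Once both inclusions are in hand, the closedness of $B^*A^*$ follows from Prop. \ref{prop:criterionAB1}(1) and the closability of $AB$ follows from the density of $\Dom((AB)^*)$, completing all three assertions.
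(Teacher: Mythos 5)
Your proof is correct. Note that the paper itself gives no proof of this proposition, deferring instead to \cite[Prop 2.35]{DG}; your argument is the standard one used there: the two-inclusion verification of $(AB)^*=B^*A^*$ (for which, as you correctly isolate, only the boundedness of $A$ and the density of $\Dom(B)$ are needed), with the density of $\Dom(B^*A^*)$ reserved solely for the closability of $AB$ via $(AB)^{**}$. One small shortcut: once $(AB)^*=B^*A^*$ is established, the closedness of $B^*A^*$ is automatic, since the adjoint of a densely defined operator is always closed, so the appeal to Prop.~\ref{prop:criterionAB1}(1) can be dispensed with entirely.
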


Together with Prop. \ref{criterion1}, this yields the following result.	

\begin{proposition}\label{criterion2}
Let $\Theta\ni z\mapsto A_z\in \cC(\cH,\cH_2)$, $\Theta\ni z\mapsto
B_z \in \cB(\cH_1,\cH)$ be holomorphic. If $A_z$, $A_z B_z$ are densely
defined and $B_z^* A_z^*$ is closed for all $z\in\Theta$, then
$\Theta\ni z\mapsto A_z B_z\in\cC(\cH_1,\cH_2)$ is holomorphic.  
\end{proposition}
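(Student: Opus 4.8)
The plan is to reduce the assertion to Prop.~\ref{criterion1}(1) and the Schwarz reflection principle (Thm.~\ref{cor:adjointhol}), with Prop.~\ref{prop:criterionAB2} serving as the bridge that passes between the product $A_zB_z$ and its adjoint $B_z^*A_z^*$. Recall that we are in the Hilbert space setting of this subsection, so $\cC(\cH_1,\cH_2)=\cC_\com(\cH_1,\cH_2)$ and every closed densely defined operator has a densely defined closed adjoint; this makes all the hypotheses of the reflection principle automatic apart from dense definiteness.

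First I would fix $z_0\in\Theta$ and establish the adjoint identity together with the closedness of $A_zB_z$. Since $A_z$ is closed and densely defined, $A_z^*\in\cC(\cH_2,\cH)$ is densely defined, while $B_z^*\in\cB(\cH,\cH_1)$. I then apply Prop.~\ref{prop:criterionAB2} with the \emph{reversed} factorization, taking the bounded factor $B_z^*$ in the role of ``$A$'' and the closed factor $A_z^*$ in the role of ``$B$''. The two density requirements become $\Dom(A_z^*)$ dense (automatic) and $\Dom(A_zB_z)$ dense (assumed), so the proposition yields that $A_zB_z$ is closed, hence $A_zB_z\in\cC(\cH_1,\cH_2)=\cC_\com(\cH_1,\cH_2)$, and that $(B_z^*A_z^*)^*=A_zB_z$. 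Taking adjoints once more and using the hypothesis that $B_z^*A_z^*$ is closed, I obtain the usable identity $(A_zB_z)^*=B_z^*A_z^*$.

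Next I would show that the reflected adjoint family is holomorphic, and conclude. Since $z\mapsto A_z$ is holomorphic with densely defined closed values, Thm.~\ref{cor:adjointhol} gives the holomorphy of $z\mapsto A_{\bar z}^*$; similarly $z\mapsto B_{\bar z}^*$ is holomorphic with bounded values by Schwarz reflection for bounded operators. As $B_{\bar z}^*A_{\bar z}^*$ is closed for every $z$ by hypothesis, Prop.~\ref{criterion1}(1) — applied to the bounded family $z\mapsto B_{\bar z}^*$ and the closed family $z\mapsto A_{\bar z}^*$ — shows that $z\mapsto B_{\bar z}^*A_{\bar z}^*=(A_{\bar z}B_{\bar z})^*$ is holomorphic. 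Thus the family $z\mapsto (A_{\bar z}B_{\bar z})^*$ is holomorphic, and since $A_zB_z$ is closed and densely defined, a final appeal to Thm.~\ref{cor:adjointhol} gives that $z\mapsto A_zB_z$ is holomorphic, as claimed. I expect the main obstacle to be the adjoint bookkeeping in the first step: Prop.~\ref{prop:criterionAB2} must be invoked with the factors interchanged, and the hypothesis that $B_z^*A_z^*$ is closed is needed precisely to upgrade $(B_z^*A_z^*)^*=A_zB_z$ into $(A_zB_z)^*=B_z^*A_z^*$; once this identity and the closedness of $A_zB_z$ are in hand, the remaining steps are routine applications of the reflection principle and Prop.~\ref{criterion1}.
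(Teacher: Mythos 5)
Your proof is correct and follows essentially the same route as the paper's: both apply Prop.~\ref{prop:criterionAB2} with $A:=B_z^*$, $B:=A_z^*$ to identify $A_zB_z$ with $(B_z^*A_z^*)^*$, then combine the Schwarz reflection principle (Thm.~\ref{cor:adjointhol}) with Prop.~\ref{criterion1}(1) to transfer holomorphy from $z\mapsto B_{\bar z}^*A_{\bar z}^*$ back to $z\mapsto A_zB_z$. The only cosmetic differences are that the paper gets closedness of $A_zB_z$ from Prop.~\ref{prop:criterionAB1} rather than from the conclusion of Prop.~\ref{prop:criterionAB2}, and it works directly with the identity $A_zB_z=(B_z^*A_z^*)^*$ instead of taking one more adjoint.
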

\begin{proof} {Since $A_z$ is closed and densely defined, $A_z^{**}=A_z$. By  Prop. \ref{prop:criterionAB1}, $A_z B_z$ is
  closed. Thus, we can apply Prop. \ref{prop:criterionAB2} to $A:=B_z^*$ and $B:=A_z^*$ for all $z\in\Theta$ and conclude
  \beq\label{eq:azbzadj}
  A_z B_z = A_z^{**} B_z^{**}=(B_z^* A_z^*)^*
  \eeq
By the Schwarz reflection principle and Prop. \ref{criterion1}, $z\mapsto B_{\bar z}^* A_{\bar z}^*$ is holomorphic. Therefore, $z\mapsto A_z B_z$ is holomorphic by (\ref{eq:azbzadj}) and the Schwarz reflection principle.}\end{proof}

\subsection{Non-empty resolvent set case}

Here is another sufficient condition for the holomorphy of the product of operator--valued holomorphic functions, based on a different strategy.

In the statement of our theorem below, the closedness of the products $A_z B_z$ and $B_z A_z$ is implicitly assumed in the non-empty resolvent set condition.

\begin{theorem}\label{theorem:holoproduct}
Let $\Theta\ni z\mapsto A_z\in\cC(\cH_2,\cH_1)$ and $\Theta\ni z\mapsto B_z\in\cC(\cH_1,\cH_2)$ be holomorphic.
 Assume that there exists $\lambda\in\C$ s.t. $\lambda^2\in{\rm rs}(A_z B_z)\cap{\rm rs}(B_z A_z)$ for all $z\in\Theta$. Then both $\Theta\ni z \mapsto A_z B_z, B_z A_z$ are holomorphic.
\end{theorem}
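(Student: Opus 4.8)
The plan is to exploit the resolvent identity $\lambda^2 \in \mathrm{rs}(A_zB_z)\cap\mathrm{rs}(B_zA_z)$ in order to build, out of the holomorphic functions $z\mapsto A_z$ and $z\mapsto B_z$, explicit holomorphic resolutions of holomorphy for the products. The key observation is that the two operators $A_zB_z$ and $B_zA_z$ are intertwined by $A_z$ and $B_z$ in the sense of an algebraic identity reminiscent of $A_z(B_zA_z - \lambda^2\one) = (A_zB_z - \lambda^2\one)A_z$ on the appropriate domains. Since $\lambda^2$ lies in both resolvent sets, the inverses $(A_zB_z - \lambda^2\one)^{-1}$ and $(B_zA_z - \lambda^2\one)^{-1}$ exist as bounded operators for every $z\in\Theta$, and the products $A_zB_z$, $B_zA_z$ are automatically closed (this is the content of the parenthetical remark preceding the statement). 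Because $A_zB_z$ has nonempty resolvent set, by the holomorphic analogue of Prop.~\ref{priu1} it suffices to prove that
\[
z\mapsto (A_zB_z - \lambda^2\one)^{-1}\in\cB(\cH_1)
\]
is holomorphic, and similarly for $B_zA_z$ on $\cH_2$.

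First I would reduce everything to the holomorphy of these two resolvents. The main work is then to produce a useful formula for $(A_zB_z-\lambda^2\one)^{-1}$ in terms of $A_z$, $B_z$ and $(B_zA_z-\lambda^2\one)^{-1}$. The natural candidate, mimicking the scalar/bounded case, is a "second resolvent"--type expression such as
\[
(A_zB_z-\lambda^2\one)^{-1} = \lambda^{-2}\Big(A_z(B_zA_z-\lambda^2\one)^{-1}B_z - \one\Big),
\]
valid when $\lambda\neq 0$, together with the symmetric formula obtained by swapping the roles of $A_z$ and $B_z$. I would verify this identity purely algebraically on the relevant domains, using the closedness of the products to make sense of the compositions. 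The point of such a formula is that it expresses each resolvent in terms of the \emph{other} resolvent sandwiched between the holomorphic functions $A_z$ and $B_z$; this breaks the apparent circularity.

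To launch the argument I would first establish holomorphy of \emph{one} of the two resolvents directly from Def.~\ref{defo2}, using resolutions of holomorphy $z\mapsto U_z$ of $A_z$ and $z\mapsto V_z$ of $B_z$ to construct a resolution of holomorphy of, say, $B_zA_z - \lambda^2\one$; here I would lean on the product criteria of Prop.~\ref{criterion1} where applicable, or build the resolution by hand from $U_z$ and $V_z$ adapted so that its range is $\Dom(B_zA_z)$. Once $z\mapsto(B_zA_z-\lambda^2\one)^{-1}$ is known to be holomorphic, the displayed identity immediately yields holomorphy of $z\mapsto(A_zB_z-\lambda^2\one)^{-1}$, since it is a composition of the holomorphic bounded-operator-valued functions $A_z$, $(B_zA_z-\lambda^2\one)^{-1}$ and $B_z$ (interpreted via suitable resolutions so that the unbounded factors act on the correct subspaces). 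Invoking the holomorphic version of Prop.~\ref{priu1} in both directions then completes the proof for both products.

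The hard part will be making the intertwining identity rigorous at the level of unbounded operators: one must check that the compositions land in the correct domains, that the closures match, and that the bounded-operator expression one writes down genuinely equals the resolvent rather than merely agreeing on a dense set. The role of $\lambda=0$ versus $\lambda\neq 0$ also needs care, as does ensuring that the resolution used for the first resolvent has range exactly $\Dom(B_zA_z)$; I expect this domain bookkeeping, rather than any conceptual difficulty, to be the principal obstacle.
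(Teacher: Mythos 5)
Your final reduction --- that by the holomorphic analogue of Prop.~\ref{priu1} it suffices to prove holomorphy of $z\mapsto(A_zB_z-\lambda^2\one)^{-1}$ and $z\mapsto(B_zA_z-\lambda^2\one)^{-1}$ --- is correct and coincides with the last step of the paper's proof, and your second-resolvent identity is algebraically sound on $\Dom(B_z)$. The genuine gap is at your declared starting point: the identity only expresses each resolvent in terms of the \emph{other} one, so it can transfer holomorphy between the two products but cannot start the argument; contrary to what you claim, it does not break the circularity. The step that would break it --- establishing holomorphy of, say, $z\mapsto(B_zA_z-\lambda^2\one)^{-1}$ ``directly from Def.~\ref{defo2}'' --- is exactly the hard part of the theorem, and the tools you point to do not deliver it. Prop.~\ref{criterion1} is inapplicable, since in general $A_z$ is not bounded and $B_z$ is not boundedly invertible. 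As for building a resolution of holomorphy of $B_zA_z$ ``by hand'': its range must be exactly $\Dom(B_zA_z)=\{x\in\Dom(A_z):\ A_zx\in\Dom(B_z)\}$, and the natural attempt --- restricting $U_z$ to $\cK_z:=\{k:\ A_zU_zk\in\Ran V_z\}$ --- fails because $\Ran V_z=\Dom(B_z)$ is not closed, so $\cK_z$ is not a Banach space, and it moreover varies with $z$. The one readily available operator with the correct range, namely $(B_zA_z-\lambda^2\one)^{-1}$ itself, is precisely the object whose holomorphy is in question, which is circular again.

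The idea you are missing is the paper's off-diagonal matrix trick. Set
\[
T_z:=\mat{0}{A_z}{B_z}{0},\qquad \Dom(T_z)=\Dom(B_z)\oplus\Dom(A_z).
\]
Because this domain is a \emph{direct sum}, the resolutions $P_z$ of $A_z$ and $Q_z$ of $B_z$ assemble into a manifest resolution of holomorphy $W_z:=\mat{0}{Q_z}{P_z}{0}$ of $T_z$; no analogue of this exists for the products themselves, which is exactly the obstruction described above. By Lemma~\ref{lemma:rsT}, $\pm\lambda\in\rs(T_z)$, so $(T_z-\lambda\one)^{-1}$ is holomorphic and bounded, and one checks that $(T_z-\lambda\one)^{-1}W_z$ is a resolution of holomorphy of $T_z^2=\mat{A_zB_z}{0}{0}{B_zA_z}$; the resolvent of $T_z^2$ at $\lambda^2$ is then holomorphic and block-diagonal with entries the two resolvents you want, and Prop.~\ref{priu1} finishes the proof. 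Finally, several points you defer as bookkeeping are more than that: (i) even granted holomorphy of $(B_zA_z-\lambda^2\one)^{-1}$, the holomorphy of the bounded extension of $A_z(B_zA_z-\lambda^2\one)^{-1}B_z$ does not follow from the holomorphy of its factors --- Sec.~\ref{ss:counterex} contains counterexamples to precisely this kind of inference; (ii) the theorem does not assume $\Dom(B_z)$ dense, so your identity, valid only on $\Dom(B_z)$, need not even determine the left-hand side; and (iii) $\lambda=0$ is allowed by the hypothesis and requires a separate argument in your scheme, whereas the matrix argument handles it uniformly.
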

 
The proof is based on the helpful trick  of replacing the study of the product $A_z B_z$ by the investigation of the operator $T_z$ on $\cH_1\oplus\cH_2$ defined by
\begin{equation}\label{eq:defT}
T_z:=\left[\begin{array}{cc}0 & A_z \\ B_z & 0 \end{array} \right], \ \ \ \Dom(T_z):=\Dom(B_z)\oplus\Dom(A_z).
\end{equation}
Its square is directly related to $A_z B_z$, namely
\beq\label{eq:Tsquared}
T_z^2= \left[\begin{array}{cc}A_z B_z & 0 \\ 0 & B_z A_z \end{array} \right], \ \ \ \Dom(T^2_z)=\Dom(A_z B_z)\oplus\Dom(B_z A_z).
\eeq
A similar idea is used in \cite{HKM}, where results on the relation between $\sp (AB) $ and $\sp (BA)$ are derived. The following lemma will be of use for us.

\begin{lemma}[\cite{HKM}, Lem 2.1]\label{lemma:rsT} Let $A\in\cC(\cH_2,\cH_1)$, $B\in\cC(\cH_1,\cH_2)$  and let $T\in\cC(\cH_1,\cH_2)$ be defined as in (\ref{eq:defT}) (with the subscript denoting dependence on $z$ ommited). Then
\[
\rs\,T=\{\lambda\in\C : \ \lambda^2 \in {\rm rs}(A B)\cap {\rm rs}(B A)\}.
\]
\end{lemma}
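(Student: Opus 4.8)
The plan is to deduce everything from the algebraic factorisation
\[
T^2-\lambda^2\one=(T-\lambda\one)(T+\lambda\one)=(T+\lambda\one)(T-\lambda\one),
\]
combined with the block form (\ref{eq:Tsquared}). First I would verify that these products, interpreted as in Def. \ref{prodi}, genuinely coincide with $T^2-\lambda^2\one$ as operators, i.e. that all three have domain $\Dom(T^2)$. This is a short computation: since $\lambda\one$ is everywhere defined and $\Dom(T)$ is a subspace, one has $(T\pm\lambda\one)x\in\Dom(T)$ iff $Tx\in\Dom(T)$, so the product domain $\{x\in\Dom(T):(T+\lambda\one)x\in\Dom(T)\}$ is exactly $\Dom(T^2)$, and on it the value is $T^2x-\lambda^2x$. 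I would also record once and for all that $T$ is closed: if $(x_n,y_n)\to(x,y)$ with $T(x_n,y_n)=(Ay_n,Bx_n)$ convergent, then closedness of $A$ and $B$ forces $(x,y)\in\Dom(B)\oplus\Dom(A)=\Dom(T)$ with the expected image, so $T-\lambda\one$ is closed for every $\lambda$.

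The second ingredient is the symmetry of $T$. With the bounded involution $U:=\mat{\one}{0}{0}{-\one}\in\cB(\cH_1\oplus\cH_2)$ one checks $UTU^{-1}=-T$, hence $U(T-\lambda\one)U^{-1}=-(T+\lambda\one)$. Since conjugation by a bounded invertible operator preserves $\cC_\inv$, this shows $T-\lambda\one\in\cC_\inv$ if and only if $T+\lambda\one\in\cC_\inv$; equivalently $\rs\,T=-\rs\,T$.

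For the inclusion $\rs\,T\subset\{\lambda:\lambda^2\in\rs(AB)\cap\rs(BA)\}$, suppose $\lambda\in\rs\,T$. By the symmetry both $T-\lambda\one$ and $T+\lambda\one$ lie in $\cC_\inv$, so by the domain identification above their composition $T^2-\lambda^2\one$ lies in $\cC_\inv$ as well, with bounded inverse $(T+\lambda\one)^{-1}(T-\lambda\one)^{-1}$. By (\ref{eq:Tsquared}) this operator is $\mat{AB-\lambda^2\one}{0}{0}{BA-\lambda^2\one}$, and a block-diagonal operator is boundedly invertible exactly when each diagonal block is; hence $\lambda^2\in\rs(AB)\cap\rs(BA)$. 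Conversely, assume $\lambda^2\in\rs(AB)\cap\rs(BA)$, so the same block argument gives $T^2-\lambda^2\one\in\cC_\inv$; write $R:=(T^2-\lambda^2\one)^{-1}$. Injectivity of $T-\lambda\one$ follows since $(T-\lambda\one)x=0$ forces $Tx=\lambda x\in\Dom(T)$, so $x\in\Dom(T^2)$ and $(T^2-\lambda^2\one)x=(T+\lambda\one)(T-\lambda\one)x=0$, whence $x=0$. Surjectivity follows since for any $y$ the vector $z:=(T+\lambda\one)Ry$ lies in $\Dom(T-\lambda\one)$ and satisfies $(T-\lambda\one)z=(T^2-\lambda^2\one)Ry=y$. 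Thus $T-\lambda\one$ is a closed bijection onto $\cH_1\oplus\cH_2$; its inverse is then everywhere defined and closed, hence bounded by the closed graph theorem, so $T-\lambda\one\in\cC_\inv$ and $\lambda\in\rs\,T$.

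I expect the main obstacle to be the passage, in the reverse inclusion, from invertibility of the product $T^2-\lambda^2\one$ to bijectivity of the single factor $T-\lambda\one$: this is the only genuinely nontrivial step and is what the explicit injectivity/surjectivity argument above is designed to handle. The other delicate point is not conceptual but must not be skipped, namely the bookkeeping that the products of unbounded operators in the factorisation have precisely the domain $\Dom(T^2)$ in the sense of Def. \ref{prodi}; without this one cannot legitimately identify $(T-\lambda\one)(T+\lambda\one)$ with the block-diagonal operator coming from (\ref{eq:Tsquared}). The symmetry $UTU^{-1}=-T$ is what makes the forward direction go through, by upgrading invertibility of $T-\lambda\one$ to simultaneous invertibility of both factors.
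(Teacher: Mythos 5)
Your proof is correct, and it splits naturally against the paper's: your inclusion $\supset$ is essentially identical to the paper's argument (invertibility of the block-diagonal $T^2-\lambda^2\one$, then injectivity and surjectivity of a single factor via the factorization $T^2-\lambda^2\one=(T\mp\lambda\one)(T\pm\lambda\one)$, with the closed graph theorem to finish), but your inclusion $\subset$ takes a genuinely different route. The paper proves $\subset$ by exhibiting an explicit algebraic inverse: for $\lambda\in\rs\,T\setminus\{0\}$ the inverse of $BA-\lambda^2\one$ is $\lambda^{-1}\tilde P_{\cH_2,\cH_1}(T-\lambda\one)^{-1}J_{\cH_2}$, i.e. $\lambda^{-1}$ times the $(2,2)$ corner of the resolvent of $T$, and the case $0\in\rs\,T$ is handled separately ($A$ and $B$ are then invertible, hence so are $AB$ and $BA$). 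You instead use the symmetry $UTU^{-1}=-T$ with $U=\mat{\one}{0}{0}{-\one}$ to upgrade $\lambda\in\rs\,T$ to simultaneous invertibility of both factors $T\pm\lambda\one$, conclude $T^2-\lambda^2\one\in\cC_\inv(\cH_1\oplus\cH_2)$ with inverse $(T+\lambda\one)^{-1}(T-\lambda\one)^{-1}$, and read off invertibility of $AB-\lambda^2\one$ and $BA-\lambda^2\one$ from the block-diagonal form; note that here you need the slightly less obvious half of the block-diagonal equivalence, namely that invertibility of the whole forces invertibility of each block, which holds because the inverse of a block-diagonal bijection is itself block-diagonal. What each approach buys: the paper's computation yields an explicit resolvent formula for $BA$ in terms of the resolvent of $T$ (of independent use), at the price of a case distinction at $\lambda=0$; yours is uniform in $\lambda$, reuses the same factorization as the converse, and isolates the identity $\rs\,T=-\rs\,T$, which the paper invokes only implicitly (``the same argument shows $\lambda\in\rs\,T$''). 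Your explicit bookkeeping that all the products involved have domain exactly $\Dom(T^2)$ in the sense of Def.~\ref{prodi}, and the preliminary check that $T$ is closed, fill in points the paper leaves tacit.
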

\proof `$\subset$': \ Suppose $\lambda\in \rs\, T\setminus\{0\}$. Then one can check that the algebraic inverse of $BA-\lambda^2\one$ equals
\[
\lambda^{-1}{\red\tilde P}_{\cH_2,\cH_1}(T-\lambda\one)^{-1}J_{\cH_2},
\]
hence $BA-\lambda^2\one\in\cC_\inv(\cH_2)$. Analogously we obtain $AB-\lambda^2\one\in\cC_\inv(\cH_1)$.

Suppose now $0\in\rs\, T$. This implies that $A$, $B$ are invertible and consequently $AB$, $BA$ are invertible.

`$\supset$': \ Suppose $\lambda^2\in \rs (AB)\cap\rs (BA)$. 
Obviously, $\lambda^2\in\rs (T^2)$. 

Suppose that $v\in\Ker(T+\lambda\one)$. Then $Tv=-\lambda v\in\Dom T$. Hence $v\in\Dom (T^2)$ and
\[(T^2-\lambda^2\one)v=(T-\lambda\one)(T+\lambda\one)v=0,\]
which implies $v=0$. Hence $T+\lambda\one$ is injective.

Suppose that $w\in\cH_1\oplus\cH_2$. Then there exists $v\in\Dom T^2$ such that $w=(T^2-\lambda^2)v$. But
\[w=(T+\lambda\one)(T-\lambda\one)v.\]
Hence $w\in\Ran(T+\lambda\one)$. 

Thus we have shown that $T+\lambda\one$ is invertible, or $-\lambda\in\rs T$. The same argument shows  $\lambda\in\rs T$. \qeds


{
{\noindent\emph{Proof of Thm. \ref{theorem:holoproduct}. \ }} 
Let $z\mapsto P_z$, resp. $z\mapsto Q_z$ be resolutions of holomorphy of $z\mapsto A_z$, resp. $z\mapsto B_z$. Then
\[
z\mapsto W_z:=\mat{0}{Q_z}{P_z}{0}
\]
is a resolution of holomorphy of $z\mapsto T_z$  defined as in (\ref{eq:defT}).
  By Lemma \ref{lemma:rsT}, $\lambda,-\lambda\in \rs\,T_z$. Define $V_z:=(T_z-\lambda\one)^{-1}W_z$. Clearly, $V_z$ is holomorphic and has values in bounded operators, and so does
\[
T_z^2 V_z= (\one+\lambda (T_z-\lambda\one)^{-1}) T_z W_z.
\]
Moreover, $V_z$ is injective for all $z\in\Theta_0$ and
\begin{eqnarray*}
\Ran V_z &=& (T_z-\lambda\one)^{-1}\Ran W_z  = (T_z-\lambda\one)^{-1}\Dom(T_z)\\ &= &(T_z-\lambda\one)^{-1}(T_z+\lambda\one)^{-1}(\cH_1\oplus\cH_2)=\Dom(T_z^2).
\end{eqnarray*}
Hence $z\mapsto T_z^2$  is holomorphic. Therefore,
 $z\mapsto (\lambda^2\one-T_z^2)^{-1}$ is holomorphic. This implies the holomorphy of $z\mapsto (\lambda^2\one-A_z B_z)^{-1}$,
$z\mapsto (\lambda^2\one-B_z A_z)^{-1}$. \qed}

\subsection{Case \texorpdfstring{$\Dom(A_z)+\Ran B_z=\cH$}{Lg}}

In this section we use a different strategy. The idea is to represent a subspace closely related to ${\rm Gr}(A_z B_z)$ as the kernel of a bounded operator which depends in a holomorphic way on $z$.
This allows us to treat the holomorphy of the product $A_z B_z$ under the assumption that $\Dom(A_z)+\Ran B_z=\cH$.

\begin{theorem}\label{thm:rancase}
Let $\Theta\ni z\mapsto A_z\in\cC(\cH,\cH_2)$ and $\Theta\ni z\mapsto B_z\in\cC(\cH_1,\cH)$ be holomorphic. Suppose that $A_z B_z$ is closed and 
\beq
\Dom(A_z)+\Ran B_z=\cH
\eeq 
for all $z\in\Theta$. Then $\Theta\ni z\mapsto A_z B_z$ is holomorphic.    
\end{theorem}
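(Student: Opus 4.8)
The plan is to realize $\Gr(A_zB_z)$ as the image, under a bounded holomorphic map, of the kernel of a surjective bounded holomorphic operator, so that Props. \ref{stwi8} and \ref{stwi9} apply directly. Since holomorphy is a local property, I fix $z_0\in\Theta$ and work on a neighbourhood $\Theta_0$ on which both $z\mapsto A_z$ and $z\mapsto B_z$ admit resolutions of holomorphy: let $z\mapsto P_z\in\cB(\cK_A,\cH)$ map $\cK_A$ bijectively onto $\Dom(A_z)$ with $z\mapsto A_zP_z$ holomorphic, and let $z\mapsto Q_z\in\cB(\cK_B,\cH_1)$ map $\cK_B$ bijectively onto $\Dom(B_z)$ with $z\mapsto B_zQ_z$ holomorphic.

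The decisive construction is the bounded holomorphic operator
\[
\Phi_z:\cK_A\oplus\cK_B\to\cH,\qquad \Phi_z(a,b):=P_za-B_zQ_zb.
\]
Its range is $\Ran P_z+\Ran(B_zQ_z)=\Dom(A_z)+\Ran B_z=\cH$ by hypothesis, so $\Phi_z$ is everywhere defined, bounded and surjective; its graph is complemented (as for any bounded everywhere-defined operator, with complement $\{0\}\oplus\cH$), hence $\Phi_z\in\cC_\com$. Prop. \ref{stwi8} then gives that $z\mapsto\Ker\Phi_z\in\Grass(\cK_A\oplus\cK_B)$ is holomorphic. Note that $\Ker\Phi_z=\{(a,b):P_za=B_zQ_zb\}$ is exactly the ``fibre product'' encoding the compatibility condition $B_zv\in\Dom(A_z)$ that is needed to form $A_zB_zv$.

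Next I push this subspace into $\cH_1\oplus\cH_2$ by the bounded holomorphic map
\[
\Psi_z:\cK_A\oplus\cK_B\to\cH_1\oplus\cH_2,\qquad \Psi_z(a,b):=(Q_zb,\,A_zP_za).
\]
On $\Ker\Phi_z$ one has $P_za=B_zQ_zb$, whence $Q_zb\in\Dom(A_zB_z)$ and $\Psi_z(a,b)=(Q_zb,\,A_zB_zQ_zb)\in\Gr(A_zB_z)$; conversely every element of $\Gr(A_zB_z)$ arises this way by writing $v=Q_zb$ and $B_zv=P_za$. Injectivity of $\Psi_z$ on $\Ker\Phi_z$ follows from injectivity of $P_z$ and $Q_z$ together with the defining relation $P_za=B_zQ_zb$. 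Thus $\Psi_z$ restricts to a bijection of $\Ker\Phi_z$ onto $\Gr(A_zB_z)$, and the latter is closed since $A_zB_z$ is closed by assumption. Prop. \ref{stwi9}, applied with $\cX_z:=\Ker\Phi_z$ and $T_z:=\Psi_z$, then yields that $z\mapsto\Psi_z(\Ker\Phi_z)=\Gr(A_zB_z)$ is holomorphic, which is precisely the holomorphy of $z\mapsto A_zB_z$ by Def. \ref{def-holo-op}.

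The only genuinely delicate point is the correct choice of $\Phi_z$ and $\Psi_z$; once the intermediate variable $u=B_zv\in\cH$ is introduced, the surjectivity of $\Phi_z$ becomes exactly the hypothesis $\Dom(A_z)+\Ran B_z=\cH$, and this is what the whole argument hinges on. The remaining steps---that $\Phi_z,\Psi_z$ are globally bounded and holomorphic on $\Theta_0$ (immediate, since $P_z$, $B_zQ_z$, $Q_z$, $A_zP_z$ are), and that $\Psi_z$ maps $\Ker\Phi_z$ bijectively onto $\Gr(A_zB_z)$---are routine bookkeeping with the resolutions, which I expect to require only careful checking rather than any new idea.
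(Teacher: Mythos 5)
Your proof is correct and takes essentially the same approach as the paper: your $\Phi_z$ and $\Psi_z$ are, up to reordering of the variables and placement of the minus sign, exactly the paper's operators $S_z$ and $T_z$ built from the two resolutions of holomorphy, and you invoke Prop. \ref{stwi8} and Prop. \ref{stwi9} in precisely the same way. If anything, your explicit verification that $\Psi_z$ is injective \emph{on} $\Ker\Phi_z$ via the relation $P_z a = B_z Q_z b$ is slightly more careful than the paper's blanket claim that $T_z$ has injective values.
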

\proof {Let $U_z$, $V_z$ be resolutions of holomorphy of respectively $A_z$, $B_z$, so in particular $\Ran U_z=\Dom(A_z)$, $\Ran B_z V_z =\Ran B_z$.} Let $S_z$ be defined by 
\[
S_z \left[\begin{array}{c} x \\ y \end{array} \right]=B_z V_z x - U_z y.
\]
Clearly, the function $z\mapsto S_z$ is {\red holomorphic (as a function with values in bounded operators)} and $\ran S_z = \Dom(A_z)+\Ran B_z=\cH$
by assumption. Therefore, $z\mapsto\Ker S_z$ is holomorphic by Prop. \ref{stwi8}.

A straightforward computation shows that
\[
\begin{aligned}
\Gr(A_z B_z)=\left\{ \left[\begin{array}{c} V_z x \\ {A_z U_z y}\end{array} \right] : \ B_z V_z x=U_z y \right\}&= \left[\begin{array}{cc}V_z & 0 \\ 0 & A_z U_z\end{array}\right]\ker\, S_z \\ &=:T_z\, \ker S_z.
\end{aligned}
\]
The function $z\mapsto T_z$ has values in injective bounded operators and is holomorphic, therefore $z\mapsto T_z \ker S_z$ is holomorphic by Prop. \ref{stwi9}.\qeds

An analogous theorem for continuity is proved in \cite[Thm. 2.3]{neubauer}, using however methods which do not apply to the holomorphic case. 

\begin{remark}An example when the assumptions of Thm. \ref{thm:rancase} are satisfied is provided by the case when $A_z$ and $B_z$ are densely defined Fredholm operators. It is well-known that the product is then closed (it is in fact a Fredholm operator), whereas the propriety $\Dom(A_z)+\Ran B_z=\cH$ follows from ${\rm codim}(\Ran B_z)<\infty$ and the density of $\Dom(A_z)$.\end{remark}

\subsection{Sums of closed operators}
Using, for instance, the arguments from Prop. \ref{kjh2}), it is easy to  show
\bep
If $z\mapsto T_z\in \cC(\cH_1,\cH_2)$ and $z\mapsto S_z\in \cB(\cH_1,\cH_2)$ are holomorphic then $z\mapsto T_z+S_z\in \cC(\cH_1,\cH_2)$ is holomorphic.\eep 

To prove a more general statement, we reduce the problem of the holomorphy of the sum to the holomorphy of the product of suitably chosen closed operators. To this end we will need the following easy lemma.

\begin{lemma}\label{lem:TA} A function $z\mapsto T_z\in\cC(\cH_1,\cH_2)$ is holomorphic iff the function
\[
z\mapsto A_z:=\mat{\one}{0}{T_z}{\one}, \quad \Dom(A_z)=\Dom(T_z)\oplus \cH_1 
\]
is holomorphic. Moreover, $\Ran A_z=\Dom(T_z)\oplus \cH_1$.
\end{lemma}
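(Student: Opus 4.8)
The lemma asserts an equivalence between the holomorphy of $z\mapsto T_z$ and that of the operator-valued function $z\mapsto A_z$ built as a block matrix, together with the identity $\Ran A_z = \Dom(T_z)\oplus\cH_1$. Let me think about what $A_z$ really is. The operator $A_z=\mat{\one}{0}{T_z}{\one}$ acts on $\cH_1\oplus\cH_1$ (domain) to $\cH_1\oplus\cH_2$: it sends $(x,y)\mapsto(x, T_z x+y)$ with $x\in\Dom(T_z)$. So $\Dom(A_z)=\Dom(T_z)\oplus\cH_1$ as stated.
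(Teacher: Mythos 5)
You have not given a proof: your proposal ends exactly where the proof should begin. The only thing you establish is $\Dom(A_z)=\Dom(T_z)\oplus\cH_1$, which is not a claim to be verified but part of the definition of $A_z$. The two substantive assertions of the lemma --- the equivalence ``$z\mapsto T_z$ holomorphic iff $z\mapsto A_z$ holomorphic'' (in both directions, in the sense of Def.~\ref{def-holo-op}) and the identity $\Ran A_z=\Dom(A_z)$ --- are nowhere addressed. Even the range identity, though easy, needs the one-line observation that for fixed $x\in\Dom(T_z)$ the second component $T_zx+y$ sweeps the whole target space as $y$ varies, so that $\Ran A_z=\Dom(A_z)$. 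There is also a type inconsistency you copied without resolving: if $y$ lies in the second summand of the domain and $T_zx\in\cH_2$, the formula $(x,y)\mapsto(x,T_zx+y)$ makes sense only when that summand is $\cH_2$ (or $\cH_1=\cH_2$); the reading that makes the lemma and its use in Thm.~\ref{thm:sum} coherent is $A_z\in\cC(\cH_1\oplus\cH_2)$ with $\Dom(A_z)=\Ran A_z=\Dom(T_z)\oplus\cH_2$, so your statement that $A_z$ maps ``$\cH_1\oplus\cH_1$ to $\cH_1\oplus\cH_2$'' contradicts your own formula.

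For comparison, here is what an actual proof requires (and what the paper does). Since adding or subtracting a bounded operator preserves holomorphy (the holomorphic analogue of Prop.~\ref{kjh2}), $z\mapsto A_z$ is holomorphic iff $z\mapsto A_z-\one=\mat{0}{0}{T_z}{0}$ is. The graph of $A_z-\one$ is $\bigl\{\bigl((x,y),(0,T_zx)\bigr):x\in\Dom(T_z),\ y\in\cH_2\bigr\}$, and a fixed, $z$-independent invertible rearrangement of the four coordinates maps it onto $\Gr(T_z)\oplus(\cH_2\oplus\{0\})$; by Prop.~\ref{stwi9} (applied with a constant invertible operator, cf.\ Prop.~\ref{kjh1}) such a rearrangement does not affect holomorphy. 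The lemma thus reduces to the claim that, for a fixed closed subspace $\cF\subset\cG'$, a function $z\mapsto\cX_z\in\Grass(\cG)$ is holomorphic iff $z\mapsto\cX_z\oplus\cF\in\Grass(\cG\oplus\cG')$ is. One direction is immediate: an injective resolution $S_z$ of $\cX_z$ yields the injective resolution $S_z\oplus J_\cF$ of $\cX_z\oplus\cF$. The converse --- extracting a holomorphic injective resolution of $\Gr(T_z)$ from one of $\Gr(A_z-\one)$ --- is the direction actually needed in Thm.~\ref{thm:sum} to pass from holomorphy of $C_z$ back to holomorphy of $T_z+S_z$, and it, like everything else of substance here, is entirely absent from your proposal.
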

\proof The function $z\mapsto A_z$ is holomorphic iff
\beq\label{eq:Tzmo}
z\mapsto A_z-\one=\mat{0}{0}{T_z}{0}
\eeq
is holomorphic. The claim follows by remarking that the graph of (\ref{eq:Tzmo}) is equal to the graph of $T_z$ up to a part which is irrelevant for the holomorphy.
\qed

\begin{theorem}\label{thm:sum}Let  $\Theta\ni z\mapsto T_z, S_z\in \cC(\cH_1,\cH_2)$ be holomorphic. Suppose that
\beq\label{eq:asum}
\Dom(S_z)+\Dom(T_z)=\cH_1.
\eeq
and $T_z+S_z$ is closed for all $z\in\Theta$. Then $\Theta\ni z\mapsto T_z + S_z$ is holomorphic.
\end{theorem}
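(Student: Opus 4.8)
The plan is to realise $T_z+S_z$ as a product of two holomorphic families of closed operators and then apply Theorem~\ref{thm:rancase}. Following the reduction announced before Lemma~\ref{lem:TA}, I would work on $\cH_1\oplus\cH_2$ and introduce the two lower-triangular augmentations
\[
A_z:=\mat{\one}{0}{T_z}{\one},\qquad
B_z:=\mat{\one}{0}{S_z}{\one},
\]
with $\Dom(A_z)=\Dom(T_z)\oplus\cH_2$ and $\Dom(B_z)=\Dom(S_z)\oplus\cH_2$. Since $T_z$ and $S_z$ are closed, so are $A_z$ and $B_z$, and by Lemma~\ref{lem:TA} (applied to $T_z$ and to $S_z$) both $z\mapsto A_z$ and $z\mapsto B_z$ are holomorphic families in $\cC(\cH_1\oplus\cH_2)$.

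The second step is to identify the operator product. Computing $A_zB_z$ in the sense of Def.~\ref{prodi}, the top row of $B_z$ being the identity prevents any domain shrinkage, and one finds $\Dom(A_zB_z)=\big(\Dom(T_z)\cap\Dom(S_z)\big)\oplus\cH_2=\Dom(T_z+S_z)\oplus\cH_2$ together with
\[
A_zB_z=\mat{\one}{0}{T_z+S_z}{\one}.
\]
Thus $A_zB_z$ is precisely the augmentation of $T_z+S_z$. In particular its graph coincides with $\Gr(T_z+S_z)$ up to the irrelevant direct summand (exactly as in the proof of Lemma~\ref{lem:TA}), so the hypothesis that $T_z+S_z$ be closed guarantees that $A_zB_z\in\cC(\cH_1\oplus\cH_2)$.

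It then remains to verify the range condition of Theorem~\ref{thm:rancase}. A direct computation gives $\Ran(B_z)=\Dom(S_z)\oplus\cH_2$, so together with $\Dom(A_z)=\Dom(T_z)\oplus\cH_2$ one obtains
\[
\Dom(A_z)+\Ran(B_z)=\big(\Dom(T_z)+\Dom(S_z)\big)\oplus\cH_2,
\]
which is all of $\cH_1\oplus\cH_2$ exactly when \eqref{eq:asum} holds. Theorem~\ref{thm:rancase} then applies and shows that $z\mapsto A_zB_z$ is holomorphic; a final use of Lemma~\ref{lem:TA}, now in the direction from the augmentation back to the operator, transfers this to the holomorphy of $z\mapsto T_z+S_z$.

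I expect the crux to be conceptual rather than computational: the whole argument hinges on the observation that the product of the two lower-triangular augmentations collapses to the single augmentation of the sum, and that the hypothesis \eqref{eq:asum} is \emph{precisely} the range condition $\Dom(A_z)+\Ran(B_z)=\cH_1\oplus\cH_2$ needed to invoke Theorem~\ref{thm:rancase}. The only points demanding care are routine domain bookkeeping---confirming that no domain is lost in forming $A_zB_z$ and that closedness passes back and forth through the augmentation---none of which I anticipate to be a serious obstacle.
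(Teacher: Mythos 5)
Your proof is correct and takes essentially the same route as the paper's: augment $T_z$, $S_z$, and $T_z+S_z$ as in Lemma \ref{lem:TA}, observe that the product of the two lower-triangular augmentations is exactly the augmentation of the sum, check that \eqref{eq:asum} gives the range condition, and apply Theorem \ref{thm:rancase}. Your domain bookkeeping is in fact slightly more careful than the paper's, whose statement of Lemma \ref{lem:TA} and proof of Theorem \ref{thm:sum} write $\Dom(T_z)\oplus\cH_1$ where your (correct) $\Dom(T_z)\oplus\cH_2$ should appear.
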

\proof Let $A_z$, $B_z$, resp. $C_z$ be defined
 as in Lem. \ref{lem:TA} from $T_z$, $S_z$, resp. $T_z+S_z$. The holomorphy of $T_z+S_z$ is equivalent to holomorphy of $C_z$. An easy computation shows that $C_z=A_z B_z$. By (\ref{eq:asum}) we have 
\[
\Dom(A_z)+\Ran B_z=(\Dom(T_z)\oplus\cH_1)+(\Dom(S_z)\oplus\cH_1)=\cH_1\oplus\cH_1.
\] 
Moreover, $A_z,B_z$ are holomorphic by Lem. \ref{lem:TA}, therefore $A_z B_z$ is holomorphic by Thm. \ref{thm:rancase}.
\qeds


\begin{remark}If $T_z$ is unbounded then the auxiliary operator $A_z$ introduced in Lem. \ref{lem:TA} satisfies $\sp(A_z)=\C$ (cf. \cite[Ex. 2.7]{HKM}). The proof of Thm. \ref{thm:sum} is an example of a situation where even if one is interested in the end in operators with non-empty resolvent set, it is still useful to work with operators with empty resolvent set.
\end{remark}

\end{document}